\renewcommand{\leq}{\leqslant}
\renewcommand{\geq}{\geqslant}
\newcommand{\pmat}[1]{\begin{pmatrix}#1\end{pmatrix}}
\newtheorem{theorem}{Theorem}[section]
\newtheorem{proposition}[theorem]{Proposition}
\newtheorem{lemma}[theorem]{Lemma}
\newtheorem{corollary}[theorem]{Corollary}
\newtheorem{conjecture}[theorem]{Conjecture}
\theoremstyle{definition}
\newtheorem{definition}[theorem]{Definition}
\newtheorem{example}[theorem]{Example}
\newcommand{\R}{\mathbb{R}}
\newcommand{\F}{\mathcal{F}}
\newcommand{\I}{\mathcal{I}}
\newcommand{\B}{\mathcal{B}}
\newcommand{\E}{\mathbf{E}}
\renewcommand{\C}{\mathcal{C}}
\renewcommand{\P}{\mathbf{P}}
\newcommand{\X}{\mathbb{X}}
\newcommand{\A}{\mathcal{A}}
\newcommand{\Z}{\mathbb{Z}}
\newcommand{\N}{\mathbb{N}}
\renewcommand{\H}{\mathfrak{H}}
\renewcommand{\L}{\mathcal{L}}
\newcommand{\MM}{\mathbf{M}}
\newcommand{\m}{\mathfrak{m}}
\newcommand{\n}{\mathfrak{n}}
\newcommand{\h}{\mathfrak{h}}
\newcommand{\V}{V}
\newcommand{\EE}{E}
\renewcommand{\v}{v}
\newcommand{\e}{g}
\renewcommand{\o}{o}
\renewcommand{\G}{G}
\newcommand{\GG}{\boldsymbol{\G}}
\newcommand{\oo}{\boldsymbol{\o}}
\newcommand{\calG}{\mathcal{G}}
\newcommand{\calGs}{\mathcal{G}_{*}}
\newcommand{\calGss}{\mathcal{G}_{**}}
\renewcommand{\t}{\mathfrak{t}}
\let\intensity\gamma
\let\Haar\lambda
\let\embedding\eta
\renewcommand{\gamma}{\textcolor{red}{\tt{Use \backslash intensity}}}
\renewcommand{\lambda}{\textcolor{red}{\tt{Use \backslash Haar}}}
\renewcommand{\eta}{\textcolor{red}{\tt{Use \backslash embedding}}}
\newcommand{\textdefn}{\textbf}
\newcommand{\zero}{\boldsymbol{0}}
\DeclareMathOperator{\card}{card}
\begin{document}

\title{Point-shifts of Point Processes on Topological Groups}
\author{James~T.~Murphy~III
    \thanks{James~T.~Murphy~III, The University of Texas at Austin, jamesmurphy@math.utexas.edu}
}
\date{}
\maketitle

\begin{abstract}
    This paper focuses on flow-adapted point-shifts of point processes on topological groups, which
    map points of a point process to other points of the point process in a translation invariant way.
    Foliations and connected components generated by point-shifts are studied, and the cardinality classification
    of connected components, previously known on Euclidean space, is generalized to unimodular groups.
    An explicit counterexample is also given on a non-unimodular group.
    Isomodularity of a point-shift is defined and identified as a key component in
    generalizations of Mecke's invariance theorem in the unimodular and non-unimodular cases.
    Isomodularity is also the deciding factor of when the reciprocal and 
    reverse of a point-map corresponding to a bijective point-shift are equal in distribution.
    Next, sufficient conditions for separating points of a point process are given.
    Finally, connections between point-shifts of point processes and vertex-shifts of unimodular networks
    are given that allude to a deeper connection between the theories.
\end{abstract}

\noindent\textbf{AMS 2010 Mathematics Subject Classification:} 37C85, 60G10, 60G55, 60G57, 05C80, 28C10. \\

\noindent\textbf{Keywords:} Point process, Stationarity, Palm probability, Point-shift,
Point-map, Mass transport principle, Foliation, Topological Group, Unimodular Network.

\section*{Acknowledgments}
This work was supported by a grant of the Simons Foundation (\#197982 to The University of Texas at Austin).

\section{Introduction}

\subsection{Background}
Stationary point processes are models of discrete subsets of $\R^d$ that exhibit
``statistical homogeneity''.
That is, their distributions are translation invariant.
Such processes have a wide array of applications in population models, wireless networks,
astrophysical models, or, more abstractly, as the set of vertices of a random graph.
The use of stationarity is a way to encode the idea that no point in the point process is ``special''.
For models of physical systems this often suffices, but $\R^d$ is not always the most natural
space on which to consider these models.
A population model may be considered on $S^2$ since the Earth is approximately spherical.
A computer network may be considered on a hyperbolic group since some studies suggest that distances on the internet
are hyperbolic~\cite{shavitt2008hyperbolic}.
An astrophysical model concerned with structures on very large scales may be considered
on $\R^3$, on $S^3$, or on a $3$-dimensional hyperbolic group, depending on the yet unknown spatial curvature of the universe.
For these situations on a group $\X$, the concept of $\X$-stationarity, i.e.\ distributional
invariance with respect to the action of $\X$ on a space, for a point process may be used.
The general theory of point processes and random measures has been developed on any locally compact second-countable Hausdorff (LCSH) space $\X$,
and $\X$-stationarity has been studied thoroughly when $\X$ is a LCSH topological group acting on a homogeneous space $S$, cf.~\cite{last2010stationary}.
The theory of $\X$-stationarity on homogeneous spaces is very general,
but in many cases an $\X$-stationary point process on a homogeneous space $S$
may be pushed forward in a natural way to an $\X$-stationary point process on $\X$ itself, and this is the setting
that is assumed here.

This paper will make use of point processes on a general LCSH group $\X$, which is fixed for the remainder of the document.
Denote the Borel sets of $\X$ by $\B(\X)$.
For sake of completeness, the following definition is included.
A \textdefn{point process} on $\X$ is a random element $\m$ in the space $\MM$
of all locally finite counting measures on $\X$, where $\MM$ is endowed with
the cylindrical $\sigma$-algebra generated by the mappings $\mu \in \MM \mapsto \mu(B)$ for each $B\in \B(\X)$.
All point processes $\m$ in this paper are assumed to be simple, meaning every atom of $\m$ has mass $1$.
In this case, $\m$ can and will be identified with its support, which is a discrete subset of $\X$.
Also, when both arguments need to be specified, the notation $\m(\omega,B)$ will be used instead of $\m(\omega)(B)$.

The following framework for dealing with $\X$-stationary point processes was developed in~\cite{last2010stationary,last2008modern}.
A \textdefn{stationary framework} $(\Omega, \A, \theta,\P)$ on $\X$ is a probability space 
$(\Omega,\A,\P)$ equipped with a measurable and $\P$-invariant left $\X$-action $\theta:\X \times \Omega \to \Omega$,
called a \textdefn{flow},
which will be identified with the family of mappings $\{\theta_x\}_{x \in \X}$ defined by $\theta_x\omega := \theta(x,\omega)$ for $x\in \X, \omega \in \Omega$.
A point process $\m$ is \textdefn{flow-adapted}
if 
\begin{equation}\label{eqtn:covariance}
    \m(\theta_x\omega ,B) = \m(\omega, x^{-1}B),\qquad \forall x\in\X,\omega\in\Omega,B\in\B(\X).
\end{equation}
Another way of expressing \eqref{eqtn:covariance} is
\[
    \m\circ \theta_x = T_x\m ,\qquad \forall x\in\X,
\]
where for $\mu \in \MM$ and $x \in \X$ the translated measure $T_x\mu$ is defined by $T_x\mu(B) := \mu(x^{-1}B)$ for all $B \in \B(\X)$.
Under these assumptions, any flow-adapted $\m$ is $\X$-stationary in the usual sense that $\m$ and $T_x\m$
have the same distribution for all $x\in\X$.
For the remainder of the document fix a stationary framework $(\Omega,\A,\theta,\P)$ on $\X$.
All point processes introduced in this document are assumed to be flow-adapted.

Many models are concerned with more than just the statistical properties of locations of points though.
Often dynamics on the points are of primary interest.
For instance, in a wireless network, one may be interested in a universally agreed upon protocol for determining an optimal route for packet transmission
that depends only on local information.
At a certain instant of time, such a protocol would determine a point-shift.
A \textdefn{point-shift} on a point process $\m$ is a measurable 
map $\H:\Omega \times \X\to \X$ on the
support of $\m$, i.e.\ for $\P$-a.e.\ $\omega \in \Omega$,
\begin{equation}\label{eqtn:point-shift-defn}
    \H(\omega,X) \in \m(\omega),\qquad \forall X \in \m(\omega).
\end{equation}
From now on, all point-shifts considered in this document are assumed to be \textdefn{flow-adapted}
in the sense that
\begin{equation}\label{eqtn:point-shift-covariance}
    \H(\theta_y\omega,yx) =y\H(\omega,x), \qquad \forall x,y \in \X,\omega \in \Omega.
\end{equation}
If unspecified, $\H(\omega,x):=x$ for $x \notin \m(\omega)$.
Dependence on $\omega$ is usually dropped and $\H(X)$ is written instead of
$\H(\omega,X)$.
Say that $\H$ has a functional property, e.g.\ bijectivity, injectivity,
surjectivity, if for $\P$-a.e.\ $\omega\in\Omega$, $\H(\omega,\cdot)$ has
the property on the support of $\m(\omega)$.
Some references refer to point-shifts as point allocations as well.

Point-shifts of point processes on $\R^d$ and the random graphs they generate 
(by considering each point a vertex and a directed edge from each point to its image under the point-shift) have been well studied,
cf.~\cite{baccelli2016foliations,baccelli2017point}.
Point-shifts of a point process on a general LCSH group have not yet been studied, and 
thus the aforementioned models, e.g.\ wireless networks on hyperbolic groups, in which
dynamics of points are of central importance, were not practical due to lack of theoretical machinery.
The main focus of this document is to extend many known results of point-shifts
of stationary point processes on $\R^d$ to point-shifts of $\X$-stationary point processes on $\X$.
In many ways this task is straightforward, but some proofs on $\R^d$ made specific use of the structural
properties of $\R$, in particular that there is a translation-invariant order on $\R$.
New proofs using mass transport techniques are given instead.
The primary tools that will be used are Palm probabilities and the mass transport principle.

The definition of Palm probability measures, as given in~\cite{last2008modern} and applied to the current setting for point processes, follows.
Fix, for the remainder of the document, a left-invariant Haar measure $\Haar$ on $\X$.
Also, for the remainder of the section, suppose $\m$ is a flow-adapted point processes with finite and nonzero intensity,
that is $\Lambda(\cdot) := \E[\m(\cdot)]$ is locally finite and not the zero measure.
The \textdefn{Palm probability measure} of $\m$, denoted $\P^\m$, is defined by,
\[
    \P^\m(A) := \frac{1}{\intensity}\E\int_\X 1_{\theta_x^{-1} \in A} w(x)\,\m(dx), \qquad \forall A \in \A,
\]
where $\intensity = \frac{\E[\m(B)]}{\Haar(B)}$ for any $B\in \B(\X)$ with $\Haar(B) \in (0,\infty)$, and
$w:\X\to\R_+$ is any non-negative measurable function with $\int_\X w\,d\Haar= 1$.
Note that $\intensity\in (0,\infty)$ is uniquely determined and $\P^\m$ is independent of the choice of $w$.
Expectation with respect to $\P^\m$ is denoted $\E^\m$.
The Palm probability measure $\P^\m$ makes rigorous what is meant by the view of the world from a typical point's perspective.
Heuristically, it is the reference probability measure conditioned on the event that $e \in \m$, where $e$ is the neutral element of $\X$.

It is also possible to convert between $\P$-a.s.\ and $\P^\m$-a.s.\ events in the following manner.
Intuitively, that which happens almost surely from the typical point's perspective happens almost surely from every point's perspective simultaneously,
and vice-versa.

\begin{theorem}\label{palmpalmostsureequivalence}
    Let $A \in \A$. Then the following are equivalent:
    \begin{enumerate}[label=(\alph*)]
        \item $\P^\m(A) = 1$,
        \item $\P(\m(x\in \X: \theta_x^{-1} \notin A) =0)=1$,
        \item $\P^\m(\m(x\in \X: \theta_x^{-1} \notin A) =0)=1$.
    \end{enumerate}
\end{theorem}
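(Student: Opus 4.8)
The plan is to route all three conditions through the single event
\[
B := \bigl\{\omega\in\Omega : \m(\omega,\{x\in\X:\theta_x^{-1}\omega\notin A\})=0\bigr\},
\]
so that (b) reads $\P(B)=1$ and (c) reads $\P^\m(B)=1$. As a preliminary step I would record that, since $\X$ is LCSH and hence $\sigma$-compact, the left Haar measure $\Haar$ is $\sigma$-finite, so there is a measurable weight $w\colon\X\to(0,\infty)$ that is \emph{strictly positive everywhere} with $\int_\X w\,d\Haar=1$; because $\P^\m$ is independent of the choice of $w$, I will use this particular $w$ in the defining formula for $\P^\m$ throughout. Then (a) $\Leftrightarrow$ (b) is immediate: unwinding the definition, $\intensity\,\P^\m(A^c)=\E\int_\X 1_{A^c}(\theta_x^{-1}\omega)\,w(x)\,\m(\omega,dx)$ is the expectation of a nonnegative quantity, so it vanishes iff $\int_\X 1_{A^c}(\theta_x^{-1}\omega)w(x)\,\m(\omega,dx)=0$ for $\P$-a.e.\ $\omega$; and since $w>0$ everywhere this is equivalent to $\m(\omega,\{x:\theta_x^{-1}\omega\notin A\})=0$ for $\P$-a.e.\ $\omega$, i.e.\ to (b). As $\intensity\in(0,\infty)$, this is in turn equivalent to $\P^\m(A^c)=0$, i.e.\ to (a).

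The heart of the matter for (b) $\Leftrightarrow$ (c) is the claim that $g(\omega):=\m(\omega,\{x:\theta_x^{-1}\omega\notin A\})$ is \emph{flow-invariant}, that is, $g\circ\theta_y=g$ for every $y\in\X$. I would prove this by a short direct computation: writing $g(\theta_y\omega)=\int_\X 1_{A^c}(\theta_x^{-1}\theta_y\omega)\,\m(\theta_y\omega,dx)$, use $\theta_x^{-1}\theta_y=\theta_{x^{-1}y}$, the flow-adaptedness \eqref{eqtn:covariance} of $\m$ in the form $\m(\theta_y\omega,\cdot)=T_y\m(\omega,\cdot)$, and the substitution $x=yz$, after which the integrand collapses to $1_{A^c}(\theta_z^{-1}\omega)$ and the measure to $\m(\omega,dz)$, yielding $g(\theta_y\omega)=g(\omega)$. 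Consequently $B$ and $B^c$ are flow-invariant. I would also note the trivial inclusion $\{\omega:\m(\omega,\X)=0\}\subseteq B$, since $g(\omega)=0$ automatically when $\m(\omega)$ is the empty configuration.

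To finish, since $B^c$ is flow-invariant we have $1_{B^c}(\theta_x^{-1}\omega)=1_{B^c}(\omega)$ for all $x$, so inserting this into the Palm formula gives
\[
\intensity\,\P^\m(B^c)=\E\Bigl[\,1_{B^c}(\omega)\int_\X w\,d\m(\omega)\Bigr].
\]
If (b) holds then $1_{B^c}=0$ $\P$-a.s.\ and the right-hand side is $0$, hence $\P^\m(B^c)=0$ and (c) follows. Conversely, if (c) holds the right-hand side is $0$, so $1_{B^c}(\omega)\int_\X w\,d\m(\omega)=0$ for $\P$-a.e.\ $\omega$; since $w>0$ everywhere, $\int_\X w\,d\m(\omega)=0$ forces $\m(\omega,\X)=0$, and on that event $\omega\in B$ by the inclusion above, so in all cases $\omega\in B$ $\P$-a.s., i.e.\ (b).

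The only genuinely delicate point is this last step, and it is the place to be careful: one must not overlook the possibility that $\m$ is the empty configuration with positive $\P$-probability, which is exactly why the argument needs the inclusion $\{\m(\cdot,\X)=0\}\subseteq B$ to absorb $B^c$ on that event. Beyond this, the proof is just bookkeeping with the definition of $\P^\m$ together with the flow-invariance computation; in particular no appeal to Campbell's theorem or the mass transport principle is required, although one could alternatively organize the displayed identities via a refined Campbell theorem.
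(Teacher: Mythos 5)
Your proof is correct, but the route differs from the paper's in a worthwhile way. For (a)$\iff$(b) you and the paper do essentially the same unwinding of the Palm definition; the paper (Lemma \ref{palmtop}) uses indicators $1_{x\in B}$ of sets of finite Haar measure and then lets relatively compact $B$ increase to $\X$, whereas your choice of an everywhere strictly positive normalized weight $w$ collapses that limiting step into one line — both are fine, and your flow-invariance computation for $g(\omega)=\m(\omega,\{x:\theta_x^{-1}\omega\notin A\})$ matches the shift-invariance verification the paper carries out. The genuine divergence is in (b)$\iff$(c): the paper routes this through Proposition \ref{shiftinvariant}, whose harder direction ($\P^\m$-a.s.\ implies $\P$-a.s.\ given $\m\neq\zero$) rests on the inversion formula (Theorem \ref{inversionformula}); you avoid the inversion formula entirely by plugging the flow-invariant event $B^c$ into the Palm definition with the strictly positive $w$, getting $\intensity\,\P^\m(B^c)=\E\bigl[1_{B^c}\int_\X w\,d\m\bigr]$, and then absorbing the empty-configuration event via the inclusion $\{\m=\zero\}\subseteq B$ — exactly the delicate point, and you handle it correctly. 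What each approach buys: the paper's Proposition \ref{shiftinvariant} is a reusable general statement about arbitrary shift-invariant events, while your argument is shorter, self-contained, and needs nothing beyond the defining formula for $\P^\m$. One small point you gloss over (as does the paper, parenthetically): joint measurability of the flow is what makes $g$ a genuine random variable; it is worth a sentence but is not a gap.
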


A proof of~\Cref{palmpalmostsureequivalence} could not be found in the literature, so one
is given in the appendix.
\Cref{palmpalmostsureequivalence} may be used
to translate between definitions under $\P^\m$ and definitions under $\P$.
For example, a \textdefn{point-map} on $\m$ is a measurable map $\h: \Omega \to \X$ such that $\h(\omega) \in
\m(\omega)$ for $\P^\m$-a.e.\ $\omega \in \Omega$.
There is a natural correspondence between point-shifts and point-maps.
Namely, if $\H$ is a point-shift, then $\h(\omega):=\H(\omega,e)$ is a point-map, and if
$\h$ is a point-map, then $\H(\omega,X):=X\h(\theta_X^{-1}\omega)$ is a
point-shift, and these operations are inverses.
\Cref{palmpalmostsureequivalence} ensures that changing the definition of $\H$ on a $\P$-null set
will only change the corresponding $\h$ on a $\P^\m$-null set, and vice-versa.
The unfamiliar reader may see \Cref{convertPandPalmDefn} in the appendix for the details of how to 
convert definitions under $\P$ and $\P^\m$ more generally.

Since point-shifts can be seen as special cases of mass transport kernels,
the mass transport principle for $\X$-stationary point processes is also a crucial tool for this study.
In what follows and for the rest of the document, $\Delta:\X\to(0,\infty)$ is the modular function of $\X$, i.e.\ 
$\Haar(Bx) = \Delta(x)\Haar(B)$ for all $B \in \B(\X)$.
Applied to the current setting, the mass transport principle for $\X$-stationary point processes
takes the following form.

\begin{theorem}[Mass Transport Theorem]\cite{last2008modern}
    For all \textdefn{diagonally invariant} $\tau$, i.e.\
    measurable $\tau:\Omega \times \X \times \X \to \R_+$ 
    invariant in the sense that
    \begin{equation}\label{transportfunctioneq}
        \tau(\theta_z \omega, zx,zy) = \tau(\omega,x,y) =: \tau(x,y),\qquad
        \omega\in\Omega,\quad x,y,z \in \X,
    \end{equation}
    it holds that
    \begin{equation}\label{mtp0}
        \E^\m \int_\X \tau(e,y)\,\m(dy) 
        = \E^{\m}\int_\X \tau(x,e)\Delta(x^{-1})\,\m(dx).
    \end{equation}
\end{theorem}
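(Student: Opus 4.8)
The plan is to reduce the identity to two uses of the Palm inversion formula — once to pass from $\P^\m$ to $\P$, once to pass back — with the covariance relation \eqref{eqtn:covariance} and the diagonal invariance \eqref{transportfunctioneq} supplying the changes of variables, and with the modular function entering only through a right translation of the left Haar measure $\Haar$. The first ingredient I would record is the refined Campbell formula: for every measurable $g\colon\Omega\times\X\to\R_+$,
\[
    \E\int_\X g(\theta_x^{-1}\omega,x)\,\m(\omega,dx)\;=\;\intensity\int_\X\E^\m[g(\omega,x)]\,\Haar(dx).
\]
For $g(\omega,x)=1_A(\omega)v(x)$ with $v\geq 0$ Borel and $0<\int v\,d\Haar<\infty$ this is exactly the defining property of $\P^\m$ (with weight $v/\!\int v\,d\Haar$, which is admissible since $\P^\m$ does not depend on the weight); as such products generate the product $\sigma$-algebra, a functional monotone class argument together with the $\sigma$-compactness of $\X$ and monotone convergence extends it to all measurable $g\geq 0$. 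Taking $g(\omega,x)=F(\omega)w(x)$ recovers the inversion formula $\intensity\,\E^\m[F]=\E\int_\X F(\theta_x^{-1}\omega)w(x)\,\m(dx)$ for measurable $F\colon\Omega\to\R_+$.

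Next I would apply the inversion formula to $F(\omega):=\int_\X\tau(\omega,e,y)\,\m(\omega,dy)$. By \eqref{eqtn:covariance} the measure $\m(\theta_x^{-1}\omega,\cdot)$ is the image of $\m(\omega,\cdot)$ under $y\mapsto x^{-1}y$, and then \eqref{transportfunctioneq} gives $\tau(\theta_x^{-1}\omega,e,x^{-1}y)=\tau(\omega,x,y)$; since $\tau\geq 0$, a Tonelli interchange produces
\[
    \E^\m\int_\X\tau(e,y)\,\m(dy)\;=\;\frac{1}{\intensity}\,\E\int_\X\int_\X\tau(\omega,x,y)\,w(x)\,\m(dx)\,\m(dy).
\]
Now I would run the refined Campbell formula on the outer $\m(dy)$-integral: writing the inner $\m(dx)$-integral as $\psi(\theta_y^{-1}\omega,y)$, where by \eqref{eqtn:covariance} $\psi(\omega,y)=\int_\X\tau(\theta_y\omega,yx,y)\,w(yx)\,\m(\omega,dx)$, the formula turns the double integral into $\int_\X\E^\m[\psi(\omega,y)]\,\Haar(dy)$. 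Inside, \eqref{transportfunctioneq} now reads $\tau(\theta_y\omega,yx,y)=\tau(\omega,x,e)$, so after another Tonelli interchange the whole expression equals
\[
    \E^\m\int_\X\tau(\omega,x,e)\Big(\int_\X w(yx)\,\Haar(dy)\Big)\m(\omega,dx).
\]
Finally, $\Haar(Bx)=\Delta(x)\Haar(B)$ gives $\int_\X w(yx)\,\Haar(dy)=\Delta(x^{-1})\!\int_\X w\,d\Haar=\Delta(x^{-1})$, so the right-hand side is precisely $\E^\m\int_\X\tau(x,e)\,\Delta(x^{-1})\,\m(dx)$, which is the assertion.

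The step I expect to be the real sticking point is purely one of handedness: one must track the direction of every substitution — that $\theta$ is a \emph{left} action, that $\m$ transforms contravariantly under left translation via \eqref{eqtn:covariance}, and that $\Haar$ is \emph{left}-invariant while $\Haar(Bx)=\Delta(x)\Haar(B)$ governs right translation — so that the modular factor emerges as $\Delta(x^{-1})$ rather than $\Delta(x)$; this is exactly where non-unimodularity is felt, and the identity collapses to the classical mass transport principle when $\Delta\equiv 1$. The measurability prerequisites for the monotone class argument and for the Tonelli interchanges are routine, since $\X$ is LCSH and $\tau,\theta,\m,w$ are jointly measurable.
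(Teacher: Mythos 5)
Your argument is correct. Note, however, that the paper does not prove this theorem at all: it is imported by citation from \cite{last2008modern}, so there is no internal proof to compare against. What you have done is reconstruct the standard derivation in Last's framework, and it checks out: the identity $\intensity\,\E^\m[F]=\E\int_\X F(\theta_x^{-1})w(x)\,\m(dx)$ is indeed just the defining formula for $\P^\m$ extended to nonnegative $F$ (equivalently, the C-L-M-M theorem, \Cref{CLMM}, with $f(\omega,x)=F(\omega)w(x)$); the covariance relation \eqref{eqtn:covariance} correctly identifies $\m(\theta_x^{-1}\omega,\cdot)$ as the image of $\m(\omega,\cdot)$ under $y\mapsto x^{-1}y$; your function $\psi(\omega,y)=\int_\X\tau(\theta_y\omega,yx,y)w(yx)\,\m(\omega,dx)$ does satisfy $\psi(\theta_y^{-1}\omega,y)=\int_\X\tau(\omega,x,y)w(x)\,\m(\omega,dx)$, so the second application of \Cref{CLMM} is legitimate; and the handedness bookkeeping is right, since with the paper's convention $\Haar(Bx)=\Delta(x)\Haar(B)$ one has $\int_\X w(yx)\,\Haar(dy)=\Delta(x^{-1})$, which is exactly the weight appearing on the right side of \eqref{mtp0}. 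The only points worth stating explicitly if this were written out in full are the joint measurability of $(\omega,y)\mapsto\psi(\omega,y)$ (needed for the second Campbell application) and the use of $\theta_x^{-1}=\theta_{x^{-1}}$, which relies on $\theta$ being a left action; both are routine, as you indicate. So the proposal is a correct, self-contained proof of a result the paper treats as known, obtained from ingredients (the refined Campbell theorem and the modular identity) that the paper itself records.
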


Interpret $\tau(\omega,x,y)$ as the amount of mass sent from $x$ to $y$ on
the outcome $\omega$.
Under $\E^{\m}$, $e$ is a point of $\m$.
Thus the left side of \eqref{mtp0} is an average of mass sent out of $e \in
\m$ to all points $\m$.
On the other hand, the right side of \eqref{mtp0} is a weighted average of mass received by $e
\in \m$ from all points of $\m$.
If $\Delta(x) = 1$ for all $x \in \X$, i.e.\ if $\X$ is unimodular, then the mass
transport formula is the one expected from the case of translations on
$\R^d$, which says that the average mass a typical point of $\m$ receives equals the average mass a typical point of $\m$ sends.

\subsection{Results}

Since the modular function $\Delta$ of $\X$ appears in the mass transport principle,
unimodularity of $\X$ turns out to be an influential property
for whether certain results about point-shifts of point processes on $\R^d$ generalize.
Moreover, the role that $\Delta$ plays in the story of point-shifts is crucial to understanding
why exactly some results generalize and others do not.
In \Cref{sec:cardinality-classification,sec:classification-counterexample}, a stark contrast between
the behavior of point-shifts in unimodular and non-unimodular cases is shown.
However,  there is more subtlety in the behavior of point-shifts when one does not assume that $\Delta(x) = 1$ for all $x \in \X$, but 
rather only that a point-shift is, in a natural sense, compatible with the modular function $\Delta$.
This motivates the primary new definition of this research.
\begin{definition}\label{defn:isomodular}
    A point-shift $\H$ on a point process $\m$ is \textdefn{isomodular} if $\P$-a.s.\ one has $\Delta(X) = \Delta(\H(X))$
    for all $X \in \m$.
\end{definition}
Intuitively, although $\X$ may not be unimodular, an isomodular point-shift 
acts on slices of $\X$ of the same modularity so that the mass transport principle works
the same as it would on a unimodular space.
\Cref{sec:mecke-invariance-thm,sec:reciprocal-and-reverse,sec:separating-points}
outline some of the subtleties of isomodularity.

In \Cref{sec:cardinality-classification}, the structure of the components of the random graph generated by a point-shift is studied when $\X$ is unimodular.
For applications, it is interesting to study the behavior of points under repeated applications of a point-shift.
How many points are in a given graph component?
Given a point, how many other points will merge with it under repeated application of a point-shift?
For $\X=\R^d$, the cardinality classification theorem proved in~\cite{baccelli2016foliations}
asserts that answers to these questions fundamentally place every component $C$ into one of three types: $\F/\F, \I/\F$, 
or $\I/\I$ with $\I$ (infinite) and $\F$ (finite) representing the answers to the two questions respectively.
The type of a component $C$ determines whether it is acyclic, the number of bi-infinite paths it contains,
whether it can be linearly ordered in a flow-adapted way, and whether any points remain after an infinite number
of applications of the point-shift.
\Cref{cardinalityclassification} shows that the classification when $\X=\R^d$ extends verbatim to $\X$ 
when $\X$ is unimodular.

\Cref{sec:classification-counterexample} shows that unimodularity of $\X$ is crucial
to the cardinality classification theorem. 
In this section, $\X$ is chosen to be an explicit non-unimodular group, the $ax+b$ group, and a point-shift $\H$ 
is given for which the previous cardinality classification fails in many respects.

It is a classical result on  $\R^d$, cf.~\cite{heveling2005characterization}, that a point-shift preserves the Palm probability
measure of a point process if and only if the point-shift is almost surely bijective on the support of the process.
This result is commonly referred to as Mecke's invariance theorem or, in some cases, Mecke's point-stationarity theorem.
In \Cref{sec:mecke-invariance-thm} it is shown that Mecke's invariance theorem holds if $\X$ is unimodular.
Moreover, when $\X$ is not necessarily unimodular, the class of bijective point-shifts that preserve Palm probabilities
is identified as the bijective isomodular point-shifts, the point-shifts that preserve the modular function of the group.
Mecke's invariance theorem in the unimodular case is \Cref{meckethm}, and the identification of isomodular
point-shifts as being the ones that preserve Palm probabilities (which may be considered a generalization of Mecke's invariance theorem for the non-unimodular case) 
is \Cref{isomodularinvariance}.

\Cref{sec:reciprocal-and-reverse} continues with the study of isomodularity and investigates for bijective point-shifts the distributional relationship
between the reciprocal of the corresponding point-map and the reverse point-map,
which corresponds to running the point-shift backwards in time.
In particular, \Cref{hinversehminussamelaw} shows that, amongst bijective point shifts,
the isomodular point-shifts are exactly the ones for which the reciprocal of the point-map
and the reverse point-map are equal in distribution.
Note, however, that in most cases the reciprocal of a point-map on $\m$ is itself not a point-map on $\m$.

\Cref{sec:separating-points} studies different ways in which functions separate points of a point process.
For example, given a function $f:\X \to S$ for some set $S$ and a point process $\m$, when are the values of $f(X)$ distinct for all $X \in \m$?
\Cref{conditionforseparatingpoints} gives some sufficient conditions on $f$ for separating points of the point process.
This is useful to show that some point-shifts are well-defined when specifying where to send a point
by comparing values of $f(X)$ for different $X \in \m$.

Finally, many of the results of the previous sections have analogs to
known results from another
framework for dealing with random graphs and dynamics on their vertices.
\Cref{sec:connections-unimodular-networks} studies this other framework, whose objects of study are called
random (rooted) networks and where dynamics on the vertices of these networks are called vertex-shifts.
Random networks are technical objects that will be introduced properly in \Cref{sec:connections-unimodular-networks}.
Heuristically, they model random graphs where a vertex has been singled out and designated the root,
and where vertices and edges may be endowed with extra associated information called marks.
An analogous requirement to $\X$-stationarity for random networks is called unimodularity.
In a unimodular random network, the root is picked ``uniformly'' from the vertex set
in the sense that unimodular networks are defined as those that satisfy a certain mass transport principle
for mass into and out of the root.
This mass transport principle is similar to the mass transport theorem for point processes if the neutral element
$e \in\X$ under a Palm probability measure were considered the ``root'' of the point process.
Because unimodular networks satisfy this mass transport principle, they exhibit analogs of many of the theorems
of point processes such as Mecke's invariance theorem and the cardinality classification of components of point-shifts~\cite{baccelli2016networks}.
The parallels between point processes and random networks
suggest the following motivating questions of the present research.
\begin{enumerate}
    \item{} Given an $\X$-stationary point process, when can the Palm version of it be seen as an embedding of a unimodular network?
    \item{} Given a unimodular network, when is it possible to find an $\X$-stationary point process
        such that the Palm version of the point process is an embedding of the given unimodular network?
\end{enumerate}
Some progress in the answering the first question is made in Section~\ref{sec:connections-unimodular-networks}, 
where the problem is reduced to an invariant geometry problem on the underlying space,
which is conjectured to always be solvable when $\X$ is unimodular.
The results of previous sections also indicate that when $\X$ is not unimodular, one should not expect either question
to be answered affirmatively.

\section{Point-shift basics and notation}

In this section, some notation and results that are used throughout the rest of the text are collected.
Fix a point-shift $\H$ with corresponding point-map $\h$ on a flow-adapted point process $\m$ with intensity $\intensity \in (0,\infty)$
for the remainder of the section.
In order to better suit the random graph setup desired in applications,
define the functions (omitting $\omega$ dependence):
\begin{definition}
\begin{itemize}
    \item[]
    \item{}
        \textdefn{Edge indicator}: $\tau^\H(x,y) := 1_{x,y\in\m,\H(x)=y}$
        for all $x,y \in \X$.
    \item{}
        \textdefn{Out-neighbors} and \textdefn{in-neighbors} of $e$ under $\P^\m$:
        \begin{align*}
            h^+ &:= \{Y \in \m: \tau^\H(e,Y)=1\} = \{\h\},\\
            h^- &:= \{X \in \m: \tau^\H(X,e)=1\} 
            = \{Y \in \m: \h(\theta_Y^{-1}\omega) = Y^{-1}\}.
        \end{align*}
    \item{}
        \textdefn{Out-neighbors} and \textdefn{in-neighbors} under $\P$ or $\P^\m$:
        \begin{align*}
            H^+(X) &:= 
            X h^+(\theta_X^{-1})
            =\{Y \in \m: \tau^\H(X,Y) = 1\} 
            = \{\H(X)\}, \\
            H^-(X) &:= 
            X h^-(\theta_X^{-1})
            =\{Y \in \m: \tau^\H(Y,X) = 1\}
            = \{Y \in \m:\H(Y)=X\}
        \end{align*}
        for all $X \in \m$.
    \item{} \textdefn{Preimage} of $e$ under $\P^\m$: if $\P^\m$-a.s.\ $\card(h^-)=1$,
        then $\h^-$ is defined to be the unique element in $h^-$.
        By \Cref{palmpalmostsureequivalence}, this is equivalent to $\H$ being bijective.
    \item{} \textdefn{Reverse point-shift} under $\P$: if $\P$-a.s.\ $\card(H^-(X))=1$ for all $X
        \in \m$ (equivalently $\P^\m$-a.s.\ $\card(h^-)=1$), then $\H^-(X)$ is defined to be the unique element in
        $H^-(X)$.
        Note that $\H^-$
        is defined if and only if $\H$ is bijective.
        In this case $\P$-a.s.\ $\H(\H^-(X)) =\H^-(\H(X)) = X$ for all $X \in \m$.
        That is, $\H$ and $\H^-$ are inverses on the support of $\m$.
\end{itemize}
\end{definition}
With these definitions, $\tau^\H$ is diagonally invariant,
and the definitions of $h^+$, $h^-$, $\h^-$ under $\P^\m$ are equivalent to the definitions 
of $H^+$, $H^-$, $\H^-$ under $\P$ or $\P^\m$ via \Cref{palmpalmostsureequivalence}.

With the mass transport theorem and \Cref{palmpalmostsureequivalence}, the following may be obtained
in a straightforward manner.

\begin{proposition}\label{structuralpropertiesofpointshifts}
    The following hold:
    \begin{enumerate}[label=(\alph*)]
        \item $\P$-a.s.\ every $X \in \m$ is the image under $\H$ of at least (resp.\ at most) $k$
            distinct points of $\m$ if and only if $\P^\m$-a.s.\ $\card(h^-) \geq k$ (resp. $\leq k$),
        \item $\P$-a.s.\ every $X \in \m$ is the image under $\H$ of finitely (resp.\ infinitely) many
            distinct points of $\m$ if and only if $\P^\m$-a.s.\ $\card(h^-) < \infty$ (resp.\ $=\infty$),
        \item $\P$-a.s.\ $\H$ is bijective (resp.\ surjective, injective) if and only if
            $\P^\m$-a.s.\ $\card(h^-)=1$ (resp.\ $\geq 1, \leq 1$).
            In particular $\h^-$ and $\H^-$ are well-defined if and only if $\H$ is bijective,
        \item 
        for all $f:\Omega\to \R_+$ measurable, 
        \begin{equation}\label{changeofpalm0}
            \E^\m[f(\theta_\h^{-1})\Delta(\h^{-1})] = \E^\m[f\card(h^-)].
        \end{equation}
        In particular, the following mass flow relationship for point-shifts holds
        \begin{equation}\label{changeofpalm1}
            \E^\m[\Delta(\h^{-1})] = \E^\m[\card(h^-)],
        \end{equation}
        \item $\P$-a.s.\ every $X \in \m$ is the image under $\H$ of at least (resp.\ at most) $k$
            points of $\m$ if and only if for all $f:\Omega \to \R_+$ measurable 
            \[
                \E^\m[f(\theta_\h^{-1})\Delta(\h^{-1})] \geq k \E^\m[f] \qquad (resp.\ \leq k\E^\m[f]),
            \]
        \item (Test for Bijectivity)\footnote{G.\ Last
                also proves this and similar results, e.g.\ Corollary 10.1 in \cite{last2008modern}.}
            $\H$ is bijective if and only if
            for all $f:\Omega \to \R_+$ measurable
            \begin{equation}\label{bijectivitytest}
                \E^\m[f(\theta_\h^{-1})]\Delta(\h^{-1})] = \E^\m[f],
            \end{equation}
        \item If $\H$ is bijective, also
            \begin{equation}\label{changeofpalm}
                \E^\m\left[f(\theta_\h^{-1})\right] = \E^\m\left[\frac{f}{\Delta(\h^-)}\right],
            \end{equation}
        \item If $\P$-a.s.\ every $X \in \m$ is the image under $\H$ of at least
            (resp.\ at most) $k$ points of $\m$, then $\E^\m[\Delta(\h^{-1})] \geq k$ (resp.\ $\leq k$),
        \item If $\E^\m[\Delta(\h^{-1})] < \infty$,
            every $X \in \m$ is the image of only finitely many $Y \in \m$
            under $\H$,
        \item\label{test} If $\E^\m[\Delta(\h^{-1})]=1$, then $\H$ is injective if and only if it is surjective.
            In particular, this is automatic if $\X$ is unimodular.
   \end{enumerate}
\end{proposition}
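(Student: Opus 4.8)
The plan is to reduce the statement to the mass flow identity \eqref{changeofpalm1}, namely $\E^\m[\Delta(\h^{-1})] = \E^\m[\card(h^-)]$, together with the characterizations of injectivity and surjectivity in part (c). Under the hypothesis $\E^\m[\Delta(\h^{-1})] = 1$ we then have $\E^\m[\card(h^-)] = 1$. Since $h^-$ is a subset of the discrete set $\m$, the quantity $\card(h^-)$ is a $\{0,1,2,\dots\}\cup\{\infty\}$-valued random variable, and having finite expectation it is $\P^\m$-a.s.\ finite.

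For the direction ``surjective $\Rightarrow$ injective'': by part (c), surjectivity of $\H$ is equivalent to $\card(h^-) \geq 1$ $\P^\m$-a.s. Then $\card(h^-) - 1$ is a nonnegative random variable with $\E^\m[\card(h^-) - 1] = 0$, hence $\card(h^-) = 1$ $\P^\m$-a.s., which by part (c) again gives injectivity (in fact bijectivity). For the converse ``injective $\Rightarrow$ surjective'': by part (c), injectivity means $\card(h^-) \leq 1$ $\P^\m$-a.s., so $\card(h^-)$ takes values in $\{0,1\}$ and $\E^\m[\card(h^-)] = \P^\m(\card(h^-) = 1) = 1$; thus $\card(h^-) \geq 1$ $\P^\m$-a.s., which is surjectivity. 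The ``in particular'' assertion is immediate, since unimodularity of $\X$ means $\Delta \equiv 1$, whence $\Delta(\h^{-1}) = 1$ identically and $\E^\m[\Delta(\h^{-1})] = 1$ trivially.

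There is no serious obstacle here; the argument is a short consequence of parts (c) and (d) of the proposition. The only point requiring a moment's care is that the elementary fact ``a nonnegative random variable with zero expectation vanishes almost surely'' is being applied to $\card(h^-) - 1$, which is legitimate precisely because $\card(h^-)$ is integer-valued (so that surjectivity really forces $\card(h^-) \geq 1$) and, thanks to the finite expectation, $\P^\m$-a.s.\ finite.
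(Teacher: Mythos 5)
Your argument for item (j) is correct and is essentially the paper's own: take $f:=1$ in \eqref{changeofpalm0} to get $\E^\m[\card(h^-)]=\E^\m[\Delta(\h^{-1})]=1$, identify injectivity (resp.\ surjectivity) with $\card(h^-)\leq 1$ (resp.\ $\geq 1$) $\P^\m$-a.s.\ via (c), and conclude from the elementary fact that an integer-valued random variable bounded on one side by $1$ with expectation $1$ must equal $1$ a.s.\ (the paper phrases this as ``bounded above or below by $1$ with expectation $1$ is constant $1$''; your $\card(h^-)-1$ formulation is the same thing).

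The gap is one of scope: the statement to be proved is the whole of \Cref{structuralpropertiesofpointshifts}, items (a)--(j), and your proposal establishes only (j) while invoking (c) and (d) as if they were already available. Since (c) and (d) are themselves part of the proposition, this is circular as written, and the substantive content of the result is precisely there. Item (d) is where the real work lies: one must apply the mass transport theorem to the diagonally invariant function $\tau(\omega,x,y) := f(\theta_y^{-1}\omega)\,1_{x,y\in\m(\omega),\,y=\H(x)}\,\Delta(y^{-1}x)$, whose left-hand side evaluates to $\E^\m[f(\theta_\h^{-1})\Delta(\h^{-1})]$ and whose right-hand side evaluates to $\E^\m[f\card(h^-)]$; the factor $\Delta(y^{-1}x)$ is exactly what produces $\Delta(\h^{-1})$ and cancels the modular weight in \eqref{mtp0}, and none of this is in your write-up. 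Items (a)--(c) require \Cref{palmpalmostsureequivalence} to pass between the $\P$-a.s.\ statement ``every $X\in\m$ has at least/at most $k$ preimages'' and the $\P^\m$-a.s.\ statement about $\card(h^-)$; items (e), (f), (h), (i) then follow by combining (a)--(d), and (g) additionally needs the $\P^\m$-a.s.\ identity $\h^-(\theta_\h^{-1})=\h^{-1}$ to substitute $f/\Delta(\h^-)$ for $f$ in (d). As it stands, your proposal is a correct proof of (j) conditional on (c) and (d), not a proof of the proposition.
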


\begin{proof}
    \begin{enumerate}
        \item[]
        \item[] \emph{(a),(b),(c):} Direct application of \Cref{palmpalmostsureequivalence}.
        \item[] \emph{(d):} Apply the mass transport theorem with the diagonally invariant function
            \[
                \tau(\omega,x,y) := f(\theta_y^{-1}\omega)1_{x,y\in\m(\omega), y=\H(x)}\Delta(y^{-1}x).
            \]
        \item[] \emph{(e):} Apply \emph{(a)} and \emph{(d)}.
        \item[] \emph{(f):} Apply \emph{(e)} with $k:=1$.
        \item[] \emph{(g):} Replace $f$ with $\frac{f}{\Delta(\h^-)}$ in \emph{(d)}
            and use the fact that $\P^\m$-a.s.\ 
                \[
                    \h^-(\theta_\h^{-1}) 
                    =\h^{-1}(\h \h^-(\theta_\h^{-1}))
                    =\h^{-1}\H^-(\h)
                    =\h^{-1}\H^-(\H(e))
                    = \h^{-1}.
                \]
        \item[] \emph{(h),(i):} Take $f:=1$ in \emph{(d)} and apply \emph{(a)} or \emph{(b)}.
        \item[] \emph{(j):} Take $f:=1$ in \emph{(d)}. Use \emph{(a)}, \emph{(c)}, and the fact that
            a random variable bounded above (or below) by 1 with expectation $1$
            must be constant 1 a.s.
    \end{enumerate} 
\end{proof}

\section{Point-shift Foliations}

\subsection{The Cardinality Classification of Components}\label{sec:cardinality-classification}
In this section the cardinality classification components of point-shifts
in \cite{baccelli2016foliations} is extended to the general stationary framework for
unimodular $\X$.
The classification theorem is \Cref{cardinalityclassification}, and the fundamental
result used in its proof, which says it is impossible to pick out finite
subsets of infinite sets in a flow-adapted manner, is
\Cref{noflowadaptedselection}.

\textbf{Throughout this section, $\X$ is assumed to be unimodular.}
Fix for the rest of the section a flow-adapted simple point process $\m$ on $\X$ with intensity $\intensity \in (0,\infty)$,
and a point-map $\h$ on $\m$ with 
corresponding point-shift $\H$.
The wording of proofs is substantially cut down by thinking of $\H(X)$
as the \textdefn{father} of $X$.
For example, the \textdefn{children} of $X$ are the
$Y \in \m$ such that $\H(Y) = X$.
Next appear the necessary ingredients needed for the
classification theorem. 

\begin{definition}
    The iterates $\H^n$ are defined by repeatedly applying the point-shift
    $\H$. That is, $\H^0(X) :=X$ and $\H^{n+1}(X) := \H(\H^n(X))$ for all $X \in
    \m$.
    Elements $Y\in \m$ that are in the image $\H^n(\m)$ for all $n \in \N$ are
    called \textdefn{primeval}, and $\H^\infty(\m)$ will denote the set of all
    primeval elements of $\m$.
    Here $\H^n(\m)$ is considered as a set, i.e.\ multiplicities are ignored, for
    all $n \leq \infty$.
    Moreover, $\H^n(\m)$ is a flow-adapted simple point process for any $n \leq \infty$.
\end{definition}

Random graphs will be used throughout this section.
Here a random (directed) graph $G$ on $\X$ is specified with a random variable $N$ taking values in $\N \cup \{\infty\}$ 
and
random elements $\{x_i\}_{i \in \N}$ in $\X$ with $V(G) := \{ x_i : i \leq N\}$, and measurable indicators $\{\xi_{ij}\}_{i,j \in \N}$ 
with $E(G):= \{ (x_i,x_j) : i,j \leq N, \xi_{ij}=1\}$.
A random subset $C$ of vertices of $G$ is a map on $\Omega$ taking values in the subsets of $V(G)$ such that $1_{x_i \in C}$ is measurable for each $i$.
Similarly, a random (countable) collection $\C = \{C_i\}_{1 \leq i \leq N^\C}$ of subsets of vertices of $G$
is identified with a random variable $N^\C$ taking values in $\N \cup \{\infty\}$ and random subsets $\{C_i\}_{i \in \N}$
with the elements of $\C$ being defined as $\{C_i : i \leq N^\C\}$.
In all cases of interest for the present study, the specific numbering of vertices in a random graph or elements
of a random collection are of no interest and will not be given upon defining the graph or collection.

The adjective flow-adapted has already been defined for point processes and point-shifts.
The same adjective will also be used for random graphs and for random collections.

\begin{definition}
A random graph $G$ on $\X$ is \textdefn{flow-adapted} if for all $\omega\in \Omega$ and all $X,Y,z \in \X$
and one has $X \in V(G(\omega))$ if and only if $zX \in V(G(\theta_z\omega))$ and $(X,Y) \in E(G(\omega))$
if and only if $(zX,zY) \in E(G(\theta_z\omega))$.
A random collection $\C = \{C_i\}_{1\leq i \leq N}$ 
is \textdefn{flow-adapted} if for all $\omega \in \Omega,z \in \X$,
one has $N(\theta_z \omega) = N(\omega)$ and
there is a permutation $\pi(\omega)$ of $1,\ldots, N(\omega)$ such that
$C_i(\theta_z\omega) = \{z x: x \in C_{\pi(\omega)}(\omega)\}$ for each $i \leq N(\omega)$.
That is, $\C(\theta_z\omega)$ contains the same elements as $\C(\omega)$, shifted by $z$, and possibly enumerated in a different order.
\end{definition}

Now the random graph generated by the point-shift $\H$ is defined.

\begin{definition}
    The random graph $G^\H$ is defined to have vertices at the points of $\m$ and directed edges from each $X \in \m$
    to $\H(X)$.
\end{definition}

Two natural equivalence relations on the vertices of $G^\H$ are defined by connected components and foils.

\begin{definition}
    The set of undirected connected components of $G^\H$ is denoted by $\C^\H$ and the component
    of $X \in \m$ is denoted $C^\H(X)$.
    Then $X, Y \in \m$ are in the same component if and only if there are $n,m \in \N$
    such that $\H^m(X) = \H^n(Y)$.
    That is, $C^\H(X)$ is the set of all \textdefn{relatives} of $X$.
    The graph $G^\H$ is flow-adapted, and hence so is $\C^\H$.
\end{definition}

\begin{definition}
    The \textdefn{foliation} $\L^\H$ is defined to be the set of \textdefn{foils}
    $L^\H(X)$ of
    $\H$ for $X \in \m$,
    which are equivalence classes under the equivalence relation where $X,Y\in \m$
    are equivalent if and only if there is $n \in \N$ such that $\H^n(X) = \H^n(Y)$.
    That is, $L^\H(X)$ is the relatives of $X$ from the same \textdefn{generation}
    as $X$.
    The foliation $\L^\H$ is flow-adapted, and $\L^\H$ is a subdivision of $\C^\H$.
    For a foil $L$, also denote $L_+ := L^\H(\H(X))$ for any $X \in L$.
    Note that if $X,X' \in L$ then $L^\H(\H(X)) = L^\H(\H(X'))$ so $L_+$ is
    well-defined.
    If there is $Y \in \m$ such
    that $\H(Y) \in L$, then set $L_- := L^\H(Y)$.
    Then $L_-$ is well-defined because if $Y,Y'$ are both such that
    $\H(Y),\H(Y') \in L$, then $L(Y) = L(Y')$.
    It holds that $(L_+)_- = L$ and when $L_-$ exists $(L_-)_+ = L$.
\end{definition}

It will be important later to know that the graph $G^\H$ is locally
finite.
The following result, generalizing one in \cite{baccelli2016foliations}, guarantees this.
It crucially relies on the unimodularity
of $\X$.

\begin{proposition}\label{finitelymanychildren}
    Let $D_n(X)$ denote the $n$-th order descendants of $X$, i.e.\ $D_n(X):= \{
    Y \in \m : \H^n(Y) = X \}$.
    Also let $D(X) := \bigcup_{n=1}^\infty D_n(X)$ be all descendants of $X$.
    Then with $d_n(X)$ $:=$ $\card (D_n(X))$, $d(X) := \card (D(X))$,
    one has for every $n \geq 0$ that
    $\E^\m[d_n(e)] = 1$.
    In particular, $d_n(e)$ is $\P^\m$-a.s.\ finite, or equivalently $\P$-a.s.\ 
    every $X \in \m$ has $d_n(X)$ finite.
    If, in addition, $G^\H$ is $\P^\m$-a.s.\ acyclic, then
    $\E^\m[d(e)] = \infty$.
\end{proposition}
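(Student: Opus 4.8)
The plan is to exploit that each iterate $\H^n$ is again a flow-adapted point-shift, feed its edge indicator into the mass transport theorem where unimodularity collapses the modular weight, and then use acyclicity to make the descendant generations disjoint so that their sizes add.

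First I would check, by induction on $n$ using \eqref{eqtn:point-shift-covariance}, that $\H^n(\theta_y\omega, yx) = y\H^n(\omega,x)$ for all $x,y\in\X$ and $\omega\in\Omega$; consequently $\tau_n(\omega,x,y) := 1_{x,y\in\m(\omega),\,\H^n(\omega,x)=y}$ is diagonally invariant in the sense of \eqref{transportfunctioneq} (it is just the edge indicator of $\H^n$, and its measurability and $\R_+$-valuedness are clear). Applying the mass transport theorem \eqref{mtp0} to $\tau_n$: the left-hand side is $\E^\m\int_\X \tau_n(e,y)\,\m(dy) = \E^\m[\m(\{\H^n(e)\})] = 1$, since $\H^n(e)\in\m$ and $\m$ is simple; the right-hand side is $\E^\m\int_\X \tau_n(x,e)\Delta(x^{-1})\,\m(dx) = \E^\m\bigl[\sum_{X\in D_n(e)}\Delta(X^{-1})\bigr]$, which equals $\E^\m[d_n(e)]$ because $\X$ is unimodular, so $\Delta\equiv 1$. (Alternatively, apply \eqref{changeofpalm1} of \Cref{structuralpropertiesofpointshifts} to the point-shift $\H^n$.) For $n=0$ one notes directly that $D_0(e) = \{e\}$ under $\P^\m$, so $d_0(e)=1$. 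Hence $\E^\m[d_n(e)]=1$ for every $n\geq 0$. Finiteness of $d_n(e)$ $\P^\m$-a.s. is then immediate, and the equivalent statement that $\P$-a.s. every $X\in\m$ has $d_n(X)$ finite follows from \Cref{palmpalmostsureequivalence} (or part (b) of \Cref{structuralpropertiesofpointshifts}).

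For the last claim, assume $G^\H$ is $\P^\m$-a.s. acyclic. The key observation is that then $D_1(e), D_2(e), \ldots$ are $\P^\m$-a.s. pairwise disjoint: if $Y\in D_n(e)\cap D_m(e)$ with $1\leq n < m$, then $e = \H^m(Y) = \H^{m-n}(\H^n(Y)) = \H^{m-n}(e)$, which exhibits a cycle through $e$ of length $m-n\geq 1$ in $G^\H$ (a self-loop when $m-n=1$), contradicting acyclicity. Therefore $\P^\m$-a.s. $d(e) = \card\bigl(\bigsqcup_{n\geq 1} D_n(e)\bigr) = \sum_{n=1}^\infty d_n(e)$, and by Tonelli $\E^\m[d(e)] = \sum_{n=1}^\infty \E^\m[d_n(e)] = \infty$.

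The computations here are routine; the only point requiring a little care is the disjointness argument, specifically noticing that it already covers the degenerate case of a fixed point $\H(e)=e$ (which is itself a cycle and hence excluded by acyclicity), and that disjointness is precisely what licenses passing from the cardinality of the union $D(e)$ to the series $\sum_{n\geq 1} d_n(e)$ before taking expectations.
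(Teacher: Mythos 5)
Your proof is correct and follows essentially the same route as the paper: treat $\H^n$ as a point-shift in its own right, apply the mass transport theorem (equivalently \eqref{changeofpalm1}) with unimodularity to get $\E^\m[d_n(e)]=1$, convert to a $\P$-a.s.\ statement via \Cref{palmpalmostsureequivalence}, and use acyclicity to see that the $D_n(e)$ partition $D(e)$ so the expectations sum to infinity. Your extra details (flow-adaptedness of the iterates, the explicit disjointness argument including the fixed-point case) are just fuller write-ups of steps the paper states tersely.
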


\begin{proof}
    $\H^n$ is a point-shift in its own right, so the mass flow relationship~\eqref{changeofpalm1}
    implies $\E^\m[d_n(e)] = \E^\m[\card (D_n(e))] = 1$ since $\X$ is unimodular.
    Thus $d_n(e)<\infty$, $\P^\m$-a.s., and hence $\P$-a.s.\ $d_n(X)<\infty$
    for all $X \in \m$ by \Cref{palmpalmostsureequivalence}.
    Moreover, when $G^\H$ is acyclic, the $D_n$ partition $D$ and hence
    $\E^\m[d(e)] = \sum_{n=1}^\infty \E^\m[d_n(e)] = \infty$.
\end{proof}

The primary tool needed to prove the classification theorem follows.
It says that it is not possible to extract finite subsets of infinite
subsets of $\m$ in a flow-adapted way.
The proof is modified from the argument proving a similar result for 
unimodular networks given by Lemma 3.23 in \cite{baccelli2016networks}.

\begin{theorem}\label{noflowadaptedselection}
    Let $\mathfrak{N} = \{ \mathfrak{N}_i \}_{1 \leq i \leq N}$ be a flow-adapted
    collection of infinite measurable subsets of $\m$ and let $k$ be the number
    of $i$ such that $e \in \mathfrak{N}_i$.
    Suppose that $\E^\m[k] < \infty$.
    If $\n$ is a measurable flow-adapted subset of $\m$ for which $\P$-a.s.\
    $\card(\n \cap \mathfrak{N}_i) < \infty$ for each $i$, then $\P$-a.s.\ $\n
    \cap \mathfrak{N}_i = \emptyset$ for all $i$.
    In particular, if $\n \subseteq \bigcup \mathfrak{N}$, then $\P$-a.s.\
    $\n = \emptyset$.
\end{theorem}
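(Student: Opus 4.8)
The plan is to argue by a mass-transport contradiction: spread one unit of mass from each point of every $\mathfrak{N}_i$ uniformly over the (almost surely finitely many) points of $\n\cap\mathfrak{N}_i$, and observe that if this intersection were nonempty with positive $\P^\m$-probability then a point of $\m$ would receive infinitely much mass while sending out only boundedly much, contradicting the Mass Transport Theorem once $\E^\m[k]<\infty$ is used. A preliminary remark: $\E^\m[k]<\infty$ forces $k<\infty$ $\P^\m$-a.s., so by \Cref{palmpalmostsureequivalence} $\P$-a.s.\ every $X\in\m$ lies in only finitely many $\mathfrak{N}_i$, which will be used to keep the transport function finite-valued.

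Concretely, I would introduce the transport function
\[
  \tau(\omega,x,y) := \sum_{i \leq N(\omega)} \frac{1_{x \in \mathfrak{N}_i(\omega)}\,1_{y \in \n(\omega)\cap\mathfrak{N}_i(\omega)}}{\card\bigl(\n(\omega)\cap\mathfrak{N}_i(\omega)\bigr)},
\]
with the convention $0/0:=0$ (a genuine $0/0$ never occurs, since $y\in\n\cap\mathfrak{N}_i$ forces $\card(\n\cap\mathfrak{N}_i)\geq 1$), and $\tau:=0$ off a $\P$-full set on which it is finite (note $\tau(\omega,x,y)=0$ unless $x\in\m(\omega)$, and for $x\in\m(\omega)$ only finitely many summands are nonzero by the preliminary remark). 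Then I would check two things. Measurability is routine, as $\tau$ is built from the measurable indicators defining the flow-adapted collection $\mathfrak{N}$, the flow-adapted set $\n$, and $\m$. Diagonal invariance is the key point: under $\theta_z$ the family $\{\mathfrak{N}_i(\theta_z\omega)\}_i$ is a relabelling of $\{z\mathfrak{N}_i(\omega)\}_i$ and $\n(\theta_z\omega)=z\n(\omega)$, and since left translation by $z$ is a bijection preserving cardinalities, each summand of $\tau(\theta_z\omega,zx,zy)$ equals, after reindexing by the relabelling, the corresponding summand of $\tau(\omega,x,y)$; hence $\tau$ satisfies \eqref{transportfunctioneq}.

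Next I would feed $\tau$ into the Mass Transport Theorem, using $\Delta\equiv 1$ since $\X$ is unimodular. Computing the outgoing mass at $e$ gives $\int_\X\tau(e,y)\,\m(dy)=\sum_i 1_{e\in\mathfrak{N}_i}\,1_{\n\cap\mathfrak{N}_i\neq\emptyset}\leq k$, so the left-hand side of \eqref{mtp0} is at most $\E^\m[k]<\infty$. Computing the incoming mass at $e$ gives $\int_\X\tau(x,e)\,\m(dx)=\sum_i 1_{e\in\n\cap\mathfrak{N}_i}\,\card(\mathfrak{N}_i)/\card(\n\cap\mathfrak{N}_i)$, which equals $+\infty$ whenever $e$ lies in some $\n\cap\mathfrak{N}_i$ (because $\mathfrak{N}_i$ is infinite while $\n\cap\mathfrak{N}_i$ is a.s.\ finite and, containing $e$, nonempty) and is $0$ otherwise; so the right-hand side of \eqref{mtp0} equals $+\infty$ unless $\P^\m(e\in\n\cap\mathfrak{N}_i\text{ for some }i)=0$. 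Equating the two sides therefore forces $\P^\m$-a.s.\ $e\notin\n\cap\bigcup_i\mathfrak{N}_i$.

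Finally, $\n\cap\bigcup_i\mathfrak{N}_i$ is a flow-adapted measurable subset of $\m$, so \Cref{palmpalmostsureequivalence} converts the $\P^\m$-a.s.\ statement just obtained into the $\P$-a.s.\ statement that $\n\cap\bigcup_i\mathfrak{N}_i$ contains no point of $\m$, i.e.\ is empty; in particular $\n\cap\mathfrak{N}_i=\emptyset$ for every $i$, and if $\n\subseteq\bigcup\mathfrak{N}$ this says $\n=\emptyset$. The step I expect to be the main obstacle is pinning down the transport function so that all requirements hold simultaneously — diagonal invariance of the per-index construction, integrable outgoing mass, and infinite incoming mass on the bad event — since dropping the normalization by $\card(\n\cap\mathfrak{N}_i)$, or omitting the sender restriction $x\in\mathfrak{N}_i$, breaks integrability of the outgoing side once $N$ is allowed to be infinite, and it is precisely there that the hypothesis $\E^\m[k]<\infty$ enters.
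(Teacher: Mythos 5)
Your proposal is correct and follows essentially the same route as the paper: the identical transport function (normalized by $\card(\n\cap\mathfrak{N}_i)$ with the sender restricted to $\mathfrak{N}_i$), the mass transport theorem with $\Delta\equiv 1$ to force $\E^\m$ of the incoming mass to be finite, and \Cref{palmpalmostsureequivalence} to pass from the $\P^\m$-a.s.\ statement at $e$ to the $\P$-a.s.\ statement for all points of $\m$. Your extra care (the $0/0$ convention, the bound $\leq k$ rather than $=k$ for outgoing mass, and the remark that $k<\infty$ a.s.) only tightens minor points the paper leaves implicit.
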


\begin{proof}
    Define
    \[
        \tau(\omega,x,y) :=   \sum_{i=1}^{N(\omega)} 1_{x,y \in
            \mathfrak{N}_i(\omega),y\in\n(\omega)}\frac{1}{\card(\n(\omega)\cap\mathfrak{N}_i(\omega))}.
    \]
    The assumptions about flow-adaptedness of $\mathfrak{N}$, $\n$, and $\m$,
    imply that $\tau$ is diagonally invariant.
    Then $\int_\X \tau(e,y)\,\m(dy) = k$ by construction since $e$ is
    in $k$ of the $\mathfrak{N}_i$.
    Also $\int_\X \tau(x,e)\,\m(dx)=\infty$ if $e \in
    \n \cap \mathfrak{N}_i$ for some $i$ because the $\mathfrak{N}_i$ are
    infinite.
    But the mass transport theorem implies
    \begin{align*}
        \E^\m\int_\X \tau(x,e)\,\m(dx)
        = \E^\m \int_\X \tau(e,y)\,\m(dy)
        =\E^\m[k] 
        < \infty,
    \end{align*}
    and thus it must be that $\P^\m$-a.s.\ $e \notin \n \cap \mathfrak{N}_i$ for
    any $i$.
    Equivalently, $\P$-a.s.\ for all $X \in \m$ it holds that 
    $X \notin \n \cap \mathfrak{N}_i$ for any $i$.
    Since $\n \cap \mathfrak{N}_i \subseteq \m$ for each $i$,
    it follows that $\P$-a.s.\ 
    $\n \cap \mathfrak{N}_i = \emptyset$ for all $i$.
\end{proof}

Note that, by \Cref{palmpalmostsureequivalence}, the condition $\E^\m[k] < \infty$ appearing in \Cref{noflowadaptedselection}
is automatically satisfied if the $\mathfrak{N}_i$ are pairwise disjoint, or more generally
if there is a constant $n$ such that almost surely no $X \in \m$ appears in more than $n$ of the $\mathfrak{N}_i$,
as this would imply $k \leq n$, $\P^\m$-a.s.

More information follows about the
structure of the locally finite graph $G^\H$.
In particular, cycles in components are unique, infinite components are
acyclic, foils in infinite components can be ordered like $\N$ or $\Z$ in a
flow-adapted way, and $\H$ acts bijectively on the primeval elements.

\begin{lemma}\label{infinitecomponentsacyclic}
    $\P$-a.s.\ a connected component $C$ of $G^\H$ is either an infinite tree or has 
    exactly one (directed) cycle $K(C)$ for which for all $Y \in C$ there is $n \in \N$
    such that $\H^n(Y) \in K(C)$.
    Moreover, $\P$-a.s.\ there are no infinite components with a cycle.
\end{lemma}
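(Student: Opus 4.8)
The plan is to split the claim into a deterministic, purely combinatorial part — the stated dichotomy — together with a single probabilistic input that rules out cycles inside infinite components.

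\textbf{Structural part.} Work on the $\P$-full event on which $\H(\omega,\cdot)$ maps $\m(\omega)$ into $\m(\omega)$; there every vertex of $G^\H(\omega)$ has out-degree exactly one, so $G^\H$ is a functional graph and the standard analysis applies. For $X\in\m$ the forward orbit $(\H^n(X))_{n\geq0}$ is either injective or eventually periodic. If $X,Y$ lie in the same component, pick $m,n$ with $\H^m(X)=\H^n(Y)=:Z$; the two orbits agree from $Z$ onward, so one is eventually periodic iff the other is, and in that case both enter the same directed cycle. Hence in a component $C$ either every forward orbit is eventually periodic, all entering one common cycle, or none is, in which case $C$ has no cycle at all. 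A directed cycle is finite by definition, and it is the only cycle of its component, since a second cycle would have to be reached by a forward orbit already trapped on the first. Finally, a component with no cycle is a connected acyclic graph, i.e.\ a tree, and it cannot be finite: a finite tree on $k$ vertices has $k-1$ edges, whereas out-degree one forces $k$ edges. This yields the dichotomy, with $K(C)$ the unique cycle and every $Y\in C$ satisfying $\H^n(Y)\in K(C)$ for some $n\in\N$.

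\textbf{Probabilistic part.} It remains to show that $\P$-a.s.\ no infinite component contains a cycle, and here I invoke \Cref{noflowadaptedselection}. Let $\mathfrak{N}$ be the collection of infinite connected components of $G^\H$: it is flow-adapted because $\C^\H$ is, its members are infinite by construction, and being pairwise disjoint they force the multiplicity $k$ to satisfy $k\leq1$, hence $\E^\m[k]<\infty$. Let $\n:=\{X\in\m:\H^n(X)=X\text{ for some }n\geq1\}$ be the set of periodic points, which is a measurable flow-adapted subset of $\m$ since $\H$ is flow-adapted. By the structural part, for each infinite component $C$ the intersection $\n\cap C$ is either empty or the finite cycle $K(C)$, so $\card(\n\cap\mathfrak{N}_i)<\infty$ for every $i$. \Cref{noflowadaptedselection} then gives $\P$-a.s.\ $\n\cap\mathfrak{N}_i=\emptyset$ for all $i$, i.e.\ no infinite component contains a periodic point, hence none contains a cycle. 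Combined with the dichotomy, every infinite component is an infinite tree and every component with a cycle is finite, which is exactly the assertion.

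\textbf{Main obstacle.} The only place probability is used is the application of \Cref{noflowadaptedselection}, which is the rigorous form of the heuristic that one cannot flow-adaptedly pick a finite set (the cycle) out of an infinite one (the component); everything else is routine functional-graph bookkeeping. The one point that wants a little care is verifying that $\mathfrak{N}$ and $\n$ are genuinely measurable and flow-covariant — in particular that the cycle $K(C)$, described intrinsically as the set of periodic points of $C$, is such an object, with finiteness immediate from the definition of a directed cycle.
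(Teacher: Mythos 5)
Your proof is correct and follows essentially the same route as the paper: the same deterministic functional-graph dichotomy, followed by the same application of \Cref{noflowadaptedselection} to kill cycles in infinite components. The only (immaterial) difference is your choice of $\mathfrak{N}$ as all infinite components and $\n$ as all periodic points, where the paper takes $\mathfrak{N}$ to be the infinite components containing a cycle and $\n$ the union of their cycles; both choices satisfy the hypotheses of the theorem and yield the same conclusion.
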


\begin{proof}
    The fact that all elements in $C$ are connected and have out-degree 
    $1$ implies there can be at most one cycle.
    If there are no cycles then $C$ must be infinite since applying $\H$ to any
    element repeatedly must never repeat an element.
    Otherwise there is one cycle $K(C)$ and connectedness implies for every $Y \in C$
    there is $n \in \N$ with $\H^n(Y) \in K(C)$.
    
    Let $\mathfrak{N}$ be the set of infinite components of $G^\H$ with a cycle, and let
    $\n\subseteq \bigcup \mathfrak{N}$ be the union of all the cycles of these components.
    Since cycles are finite, it follows that $\n \cap C$ is finite for all components $C \in
    \mathfrak{N}$.
    By \Cref{noflowadaptedselection} $\n = \emptyset$ and hence there are no
    infinite components with a cycle $\P$-a.s.
\end{proof}

\begin{definition}
    Within an infinite acyclic connected component $C \in \C^\H$, it is possible to define
    an order, called the \textdefn{foil order}, on the foils $\L^\H(C)$ that are
    subsets of $C$.
    This is accomplished by declaring
    $L^\H(X) < L_+^\H(X)$ for all $X \in C$.
    When thinking of $\H(X)$ as being the father of $X$, the order 
    is that of seniority.
\end{definition}

\begin{lemma}
    The foil order on an infinite acyclic component $C$
    is a total order on $C$ similar to either the order of $\Z$ or $\N$.
\end{lemma}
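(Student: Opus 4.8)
The plan is to establish the claimed structure on the set of foils $\L^\H(C)$, the ``foil order on $C$'' being understood via the partition of $C$ into foils. The first step is to record that $L \mapsto L_+$ is a \emph{total} function on $\L^\H(C)$: for any $X \in C$ the father $\H(X)$ again lies in $C$, so $L_+ := L^\H(\H(X))$ is always defined and, as already noted, independent of the representative $X \in L$. This successor map is injective, since the identity $(L_+)_- = L$ exhibits a left inverse, namely the predecessor map $L \mapsto L_-$, which is defined exactly on the image of the successor map (the foils some element of which has a child). Thus $\L^\H(C)$ is an abstract set equipped with a total injective successor function, and the proof reduces to the elementary fact that such a set, if it is connected under the successor relation, is order-isomorphic to $\Z$ when the successor map is onto and to $\N$ otherwise.

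Next I would rule out cycles among foils: if $L_+^{(k)} = L$ for some $k \geq 1$ and $X \in L$, then $\H^k(X)$ lies in the same foil $L^\H(X)$ as $X$, so $\H^{n+k}(X) = \H^n(X)$ for some $n \in \N$, contradicting acyclicity of $C$. Together with injectivity of the successor map this shows that the forward iterates $L, L_+, L_+^{(2)}, \dots$ of any foil are pairwise distinct (otherwise, peeling off successors via the left inverse would produce a cycle), that predecessors are unique where they exist, and that there is at most one foil of $C$ with no predecessor.

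The remaining ingredient is connectedness. If $X, Y \in C$ then $\H^m(X) = \H^n(Y)$ for some $m, n \in \N$, hence $\bigl(L^\H(X)\bigr)_+^{(m)} = \bigl(L^\H(Y)\bigr)_+^{(n)}$: any two foils of $C$ have a common forward iterate. Now split into two cases. If some foil $L_0$ has no predecessor, a ``pull back along the finite path'' argument (using uniqueness of predecessors) shows every foil of $C$ equals $(L_0)_+^{(j)}$ for a unique $j \geq 0$; since these are distinct, $j \mapsto (L_0)_+^{(j)}$ is an order isomorphism from $\N$ onto $(\L^\H(C), <)$, where $<$ denotes the transitive closure of the relation $L < L_+$. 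If instead every foil has a predecessor, then fixing any foil $L$, the doubly-infinite family $\{L_+^{(k)}\}_{k \in \Z}$ (the predecessor map supplying the terms with $k < 0$) consists of distinct foils and, by the common-iterate property together with the same pull-back argument, exhausts $\L^\H(C)$, giving an order isomorphism $\Z \to (\L^\H(C), <)$. In both cases antisymmetry of $<$ follows from the absence of cycles and totality follows from the common-iterate property, so $<$ is indeed a total order similar to $\N$ or $\Z$.

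The step I expect to be the main obstacle is the bookkeeping in the pull-back argument: given $M = L_+^{(m)} = \bigl(L'\bigr)_+^{(n)}$ one must check that applying the predecessor map $\min(m,n)$ times to $M$ is legitimate (each intermediate foil lies on the displayed finite path and so has a well-defined predecessor) and conclude that $L$ and $L'$ differ by $|m-n|$ successor steps in the correct direction. This single computation is what simultaneously yields totality, the uniqueness of the minimal foil in the $\N$ case, and the fact that the two cases are exhaustive and mutually exclusive. Everything else — well-definedness of $L_+$ and $L_-$ and the reduction to an abstract successor structure — is immediate from the definitions already in place, and the whole argument is deterministic: it uses only that $C$ is an infinite acyclic component, no further appeal to unimodularity or the mass transport principle being required.
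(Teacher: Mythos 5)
Your argument is correct and follows essentially the same route as the paper: enumerate the foils by iterating the successor/predecessor maps from a fixed foil, use connectedness (the common forward iterate plus the pull-back along unique predecessors) for surjectivity and totality, and use acyclicity to rule out repetitions, your cycle-exclusion step (if $L_+^{(k)}=L$ then $\H^{n+k}(X)=\H^{n}(X)$ for some $n$) being in fact a cleaner version of the paper's inductive construction of a cycle. One cosmetic remark: the assertion that at most one foil lacks a predecessor does not follow from injectivity and acyclicity alone but from the connectedness/pull-back step, which your subsequent case analysis supplies, so nothing essential is missing.
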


\begin{proof}
    Fix any $X \in C$.
    Let $L_0 := L^\H(X)$ and recursively define $L_{n+1} := (L_{n})_+$ and if it
    exists $L_{-n-1} :=  (L_{-n})_-$ for $n > 0$.
    Let $L$ be a foil in $C$, then it must be that $L=L_i$ for some $i$.
    Indeed, let $Y \in L$ and by definition of connectedness choose
    $n,m$ such that $\H^n(Y) = \H^m(X) \in L_m$.
    It then follows by induction that $Y \in L_{m-n}$, and hence $L=L^\H(Y) =
    L_{m-n}$.
    Next it is shown that $i \mapsto L_i$ is injective.
    Suppose for contradiction that $L_j= L_{j+N}$.
    Then there are $N$ pairs $(X_i, Y_{i+1})$ with $X_i \in L_i,Y_{i+1} \in
    L_{i+1}$
    such that $\H(X_i) = Y_{i+1}$ for $j \leq i \leq j+N-1$.
    Since $L_j = L_{j+N}$ it follows that $X_j,Y_{j+N} \in L_j$.
    Hence it is possible to choose $n$ such that $\H^n(X_j) = \H^n(Y_{j+N})$ and
    $\H^n(X_i) = \H^n(Y_i)$ for all $j+1 \leq i \leq j+N-1$.
    Assume by induction that for some $k$ one has $\H^{N}(\H^n(X_j)) = \H^{N-k}(\H^n(Y_{j+k}))$.
    Then as long as $k+1 \leq N$,
    \begin{align*}
        \H^N(\H^n(X_j)) 
        &= \H^{N-k}(\H^n(Y_{j+k})) \\
        &= \H^{N-k}(\H^n(X_{j+k}))\\
        &= \H^{N-k-1}(\H^{n}(\H(X_{j+k})))\\
        &= \H^{N-k-1}(\H^{n}(Y_{j+k+1})).
    \end{align*}
    Since $\H^N(\H^n(X_j)) = \H^{N-1}(\H^n(\H(X_j))) = \H^{N-1}(\H^n(Y_{j+1}))$ shows the base case $k=1$ holds,
    the induction is complete.
    Therefore, one finds
    $\H^{N}(\H^n(X_j)) = \H^{0}(\H^n(Y_{j+N})) = \H^n(X_j)$,
    contradicting that $C$ is acyclic.
    Thus $i \mapsto L_i$ is injective.
    If there is a smallest foil $L_{i_0}$ then $i\mapsto L_{i_0+i}$ is an order
    isomorphism with $\N$,
    otherwise $i \mapsto L_i$ is an order isomorphism with $\Z$.
\end{proof}

\begin{lemma}\label{primevalrestriction}
    $\H$ restricts to a bijective point-shift $\H|_\n$ on the flow-adapted
    sub-process $\n:=\H^\infty(\m)$ of primeval elements.
\end{lemma}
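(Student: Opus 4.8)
The plan is to check first that $\H$ sends primeval elements to primeval elements, so that $\H|_\n$ is a genuine flow-adapted point-shift on $\n$, then to establish that $\H|_\n$ is surjective, and finally to upgrade surjectivity to bijectivity using unimodularity via \Cref{structuralpropertiesofpointshifts}.

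First I would note $\H(\n) \subseteq \n$: writing $\n = \H^\infty(\m) = \bigcap_{n\ge 0}\H^n(\m)$, if $X \in \n$ then $\H(X) \in \H^{n+1}(\m)$ for every $n$, and also $\H(X) \in \m = \H^0(\m)$, so $\H(X) \in \bigcap_{n\ge 0}\H^n(\m) = \n$. Since $\H$ and $\n$ are both flow-adapted, it follows at once that $\H|_\n$ (defined to equal $\H$ on $\n$ and the identity off $\n$) is a flow-adapted point-shift on $\n$. Moreover $\n \subseteq \m$ has finite intensity; if its intensity is zero then $\n$ is $\P$-a.s.\ empty and the claim is vacuous, so from now on assume the intensity of $\n$ lies in $(0,\infty)$, and let $\h_\n$ be the point-map of $\H|_\n$ and $\P^\n$ the Palm probability of $\n$.

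Next I would prove surjectivity, i.e.\ that $\P$-a.s.\ every $Y \in \n$ equals $\H(X)$ for some $X \in \n$. By \Cref{finitelymanychildren} together with \Cref{palmpalmostsureequivalence}, $\P$-a.s.\ the preimage $H^-(Y) = \{X \in \m : \H(X)=Y\}$ is finite for every $Y \in \m$; fix such an $\omega$ and a $Y \in \n(\omega)$. Because $\H(\m)\subseteq\m$ we get $\H^n(\m) = \H^{n-1}(\H(\m)) \subseteq \H^{n-1}(\m)$, so $\{\H^n(\m)\}_n$ is a decreasing sequence of sets; and since $Y \in \H^n(\m)$ for every $n\ge 1$, the sets $H^-(Y)\cap\H^{n-1}(\m)$ form a decreasing sequence of nonempty finite sets. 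Such a sequence has nonempty intersection, which is exactly $H^-(Y)\cap\bigcap_{n\ge 0}\H^n(\m) = H^-(Y)\cap\n$; any element of it is a primeval preimage of $Y$. Hence $\H|_\n$ is surjective on $\n$.

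Finally, since $\X$ is unimodular we have $\Delta\equiv 1$, so $\E^\n[\Delta(\h_\n^{-1})] = 1$, and \Cref{structuralpropertiesofpointshifts}(j) applied to the point-shift $\H|_\n$ on $\n$ shows that $\H|_\n$ is injective if and only if it is surjective. Combined with the surjectivity just established, $\H|_\n$ is bijective, as claimed. The step that needs the most care is surjectivity: it is precisely the finiteness of $H^-(Y)$—which rests on unimodularity through \Cref{finitelymanychildren}—that keeps the decreasing intersection of preimages-within-$\H^n(\m)$ from being empty, so this is also exactly the place where the hypothesis that $\X$ is unimodular is essential.
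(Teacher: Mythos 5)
Your proof is correct and follows essentially the same route as the paper: stability of $\n$ under $\H$, surjectivity via the finiteness of the set of children (\Cref{finitelymanychildren}, resting on unimodularity) together with $Y$ lying in every $\H^n(\m)$, and then \Cref{structuralpropertiesofpointshifts}(j) to upgrade surjectivity to bijectivity. Your decreasing-finite-intersection phrasing of the surjectivity step is just a repackaging of the paper's contrapositive argument (choosing $n$ large enough that no non-primeval child lies in $\H^n(\m)$), so there is no substantive difference.
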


\begin{proof}
    $\H$ naturally restricts to a
    point-shift $\H|_\n$ on $\n$ because if $X \in \H^\infty(\m)$ then
    $\H(X) \in \H^\infty(\m)$.
    By definition, primeval elements are in the image $\H(\m)$, but moreover
    they are in the image $\H(\n)$.
    Indeed, by \Cref{finitelymanychildren}, points in $\m$ have only finitely
    many children.
    If $X \in \n$ were such that none of its children were primeval, then there
    would be $n\in \N$ large enough that none of $X$'s children are in the
    image $\H^n(\m)$.
    But then $X$ would not be in $\H^{n+1}(\m)$, contradicting that $X \in
    \H^\infty(\m)$.
    Thus the restricted point-shift $\H|_\n$ is surjective.
    If $\n$ is not the empty process $\P$-a.s.\ then it has nonzero and finite intensity
    and $\E^\n[\Delta(\h^{-1})] =1$ by unimodularity so
    that surjectivity and injectivity are equivalent by
    \Cref{structuralpropertiesofpointshifts} (j), so $\H|_\n$ is bijective.
\end{proof}

The main result of this section follows.

\begin{theorem}[Cardinality Classification of a Component]\label{cardinalityclassification}
    $\P$-a.s.\ each connected component $C$ of $G^\H$ is in one of the three
    following classes:
    \begin{enumerate}
        \item \textbf{Class $\F/\F$:}
            $C$ is finite, and hence so is each of its $\H$-foils.
            In this case, when denoting by $1 \leq n = n(C) < \infty$
            the number of its foils:
            \begin{itemize}
                \item $C$ has a unique cycle of length $n$;
                \item $\H^\infty(\m)\cap C$ is the set of vertices of this cycle.
            \end{itemize}

        \item \textbf{Class $\I/\F$:}
            $C$ is infinite and each of its $\H$-foils is finite.
            In this case:
            \begin{itemize}
                \item $C$ is acyclic;
                \item Each foil has a junior foil;
                \item $\H^\infty(\m)\cap C$ is a unique \textdefn{bi-infinite} path,
                    i.e.\ a sequence $\{X_n\}_{n \in \Z}$ of points of $\m$ 
                    such that $\H(X_n) = X_{n+1}$ for all $n$.
            \end{itemize}

        \item \textbf{Class $\I/\I$:}
            $C$ is infinite and all its $\H$-foils are infinite.
            In this case:
            \begin{itemize}
                \item $C$ is acyclic;
                \item $\H^\infty(\m) \cap C = \emptyset$.
            \end{itemize}
    \end{enumerate}
\end{theorem}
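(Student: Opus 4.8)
The plan is to split into the finite and the infinite component cases; in the infinite case \Cref{noflowadaptedselection} is used three times, and the cycle/path structure is then read off from \Cref{primevalrestriction} (bijectivity of $\H$ on $\H^\infty(\m)$) and \Cref{infinitecomponentsacyclic}. I would start with a monotonicity observation: if a foil $L$ is infinite, so is $L_+$. Indeed every $X\in L$ has $\H(X)\in L_+$, hence $L\subseteq\bigcup_{Y\in L_+}D_1(Y)$, and each $D_1(Y)$ is finite by \Cref{finitelymanychildren}; thus $L_+$ finite would force $L$ finite.

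Fix an infinite component $C$, acyclic by \Cref{infinitecomponentsacyclic}, with foils totally ordered like $\Z$ or $\N$. By the monotonicity observation the indices with infinite foil form an up-set, so if $C$ is \emph{mixed} (has both a finite and an infinite foil) it has a first infinite foil $L_{i_0}$, whose junior foil $L_{i_0-1}$ exists and is finite. Let $\n:=\{Z\in\m: L^\H(Z)$ is infinite, $(L^\H(Z))_-$ exists and is finite, and $Z$ has a child$\}$, and let $\mathfrak{N}$ be the collection of the first infinite foils of the mixed components. Inside a mixed component $\n$ lies entirely in $L_{i_0}$, and at most $\card(L_{i_0-1})<\infty$ points of $L_{i_0}$ are fathers, so $\n\cap C$ is finite; it is nonempty since $L_{i_0-1}\neq\emptyset$ and each of its points has its father in $L_{i_0}\cap\n$. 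The elements of $\mathfrak{N}$ are infinite and pairwise disjoint (one per component), so $\E^\m[k]\leq 1$, and $\n\subseteq\bigcup\mathfrak{N}$ with $\n\cap\mathfrak{N}_i$ finite; \Cref{noflowadaptedselection} gives $\n=\emptyset$ $\P$-a.s., so there are no mixed components. Together with the finite components this makes the three classes $\F/\F$, $\I/\F$, $\I/\I$ exhaustive (and obviously disjoint).

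For an $\I/\F$ component I would exclude the foil order $\N$ with a second application of \Cref{noflowadaptedselection}: take $\mathfrak{N}$ to be the infinite components with foil order $\N$ and finite minimal foil, and $\n$ the union of those minimal foils; each intersects $\bigcup\mathfrak{N}$ in one finite nonempty foil, so $\n=\emptyset$ and every foil of an $\I/\F$ component has a junior foil. With foil order $\Z$, for each foil $L_i$ the sets $\H^n(\m)\cap L_i$ decrease in $n$, are finite, and are nonempty since they contain $\H^n(L_{i-n})\neq\emptyset$; hence $\H^\infty(\m)\cap L_i\neq\emptyset$. To see $\H^\infty(\m)\cap C$ is a \emph{unique} bi-infinite path, use that $\H$ restricts to a bijection on $\H^\infty(\m)$ by \Cref{primevalrestriction}: its orbits partition $\H^\infty(\m)$, and any two primeval points $X,Y$ of $C$ have $\H^n(X)=\H^m(Y)$ for some $n,m$, so their orbits meet and coincide; thus $\H^\infty(\m)\cap C$ is a single orbit of this bijection, which in the acyclic $C$ is a bi-infinite path. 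For an $\I/\I$ component the same orbit argument shows $\H^\infty(\m)\cap C$, if nonempty, is a single bi-infinite path and so meets each foil of $C$ in exactly one point; but the foils of an $\I/\I$ component are infinite, so taking $\mathfrak{N}$ to be the collection of all foils of $\I/\I$ components and $\n$ the set of all primeval points in $\I/\I$ components, each $\n\cap\mathfrak{N}_i$ has at most one point, and \Cref{noflowadaptedselection} applies a third time to force $\n=\emptyset$, i.e.\ $\H^\infty(\m)\cap C=\emptyset$.

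It remains to handle a finite component $C$. By \Cref{infinitecomponentsacyclic} it has a unique directed cycle $K(C)$ into which every vertex maps under some iterate of $\H$, and $K(C)\subseteq\H^\infty(\m)$; since $\H$ is bijective on $\H^\infty(\m)$ and $\H^\infty(\m)\cap C$ is finite and $\H$-invariant, it is a union of directed cycles of $C$, hence equals $K(C)$. Writing $K(C)=(X_0,\dots,X_{\ell-1})$ with $\ell:=\card(K(C))$, one has $\H^k(X_i)=X_{(i+k)\bmod\ell}$, so $X_i$ and $X_j$ lie in the same foil iff $i\equiv j\pmod{\ell}$; thus the cycle meets $\ell$ distinct foils, and since every $Y\in C$ has $\H^k(Y)$ on the cycle while $L\mapsto L_+$ permutes these $\ell$ cycle-foils cyclically, every foil of $C$ is one of them, giving $n(C)=\ell$. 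This completes the classification. The main obstacle is the first application of \Cref{noflowadaptedselection}: one must choose the flow-adapted sets $\n$ and $\mathfrak{N}$ so that $\n$ is finite inside each component yet nonempty precisely on the event that a mixed component exists; once that is set up correctly, the remaining steps follow fairly mechanically from \Cref{noflowadaptedselection}, \Cref{primevalrestriction}, and \Cref{infinitecomponentsacyclic}.
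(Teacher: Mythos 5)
Your proof is correct, and its skeleton is the paper's: acyclicity of infinite components from \Cref{infinitecomponentsacyclic}, elimination of mixed components and of minimal foils by two applications of \Cref{noflowadaptedselection}, and a third application to rule out primeval points in infinite foils, all resting on \Cref{finitelymanychildren} and \Cref{primevalrestriction}. The genuine difference is in how you produce the bi-infinite primeval path in an $\I/\F$ component. The paper forms the subgraph consisting of a finite foil, its descendants and its forefathers, and invokes K\"onig's infinity lemma to extract an infinite backward path; you instead note that for each foil $L_i$ the sets $\H^n(\m)\cap L_i$ are nested, finite and nonempty (they contain $\H^n(L_{i-n})$, which is where your prior junior-foil step is genuinely needed), so every foil of an $\I/\F$ component already contains a primeval point, and you then identify $\H^\infty(\m)\cap C$ with a single orbit of the bijection $\H|_{\H^\infty(\m)}$, which in an acyclic component is a bi-infinite path. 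This avoids K\"onig's lemma and delivers uniqueness of the path and the at-most-one-primeval-point-per-foil fact (used for the $\I/\I$ case) in one stroke, whereas the paper gets the latter directly from injectivity of $\H$ on the primeval points. Your first selection argument differs only cosmetically (you select the fathers lying in the first infinite foil rather than the paper's maximal finite foil, which forces you to check that this set is nonempty and finite, as you do), and you spell out the $\F/\F$ bullets, including $n(C)=$ cycle length, which the paper dismisses as immediate; both variants are sound.
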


\begin{proof}
    The properties of finite components $C$ are immediate, so only infinite components are considered.
    Recall that by \Cref{infinitecomponentsacyclic} $\P$-a.s.\ all infinite components are acyclic.
    Consider the collection $\mathfrak{N}$ of all infinite components that
    have both finite and infinite foils.
    Suppose $C \in \mathfrak{N}$.
    According to \Cref{finitelymanychildren}, all $X \in \m$ have only finitely
    many children, so that if $L$ is an infinite foil, then $L_+$
    is also infinite.
    It follows that there is a maximum finite foil $L$ with respect to the foil
    order in $C$.
    Let $\n\subseteq \bigcup \mathfrak{N}$ be the union of these maximum 
    finite foils of each $C \in \mathfrak{N}$.
    By construction, $\n \cap C$ is finite for each $C \in \mathfrak{N}$,
    so \Cref{noflowadaptedselection} implies $\n=\emptyset$ and hence
    $\mathfrak{N}=\emptyset$, $\P$-a.s.
    Thus $\P$-a.s.\ each infinite component is either of class $\I/\F$ or $\I/\I$.

    Next, redefine $\mathfrak{N}$ to be the set of infinite foils $L$ of $\m$, and
    let $\n:=\H^\infty(\m)$.
    By construction $\n \cap L$ is finite for each $L \in \mathfrak{N}$ because a foil
    cannot have multiple primeval elements.
    If $X\neq Y \in L$ were both primeval, then with $n$ minimal such that $\H^n(X) =
    \H^n(Y)$ one finds the primeval element $\H^n(X)$ is the image of two
    distinct primeval elements $\H^{n-1}(X),\H^{n-1} (Y)$, contradicting
    injectivity of $\H|_\n$ guaranteed by \Cref{primevalrestriction}.
    Thus \Cref{noflowadaptedselection} implies $\P$-a.s.\ $\n \cap L = \emptyset$ for all
    infinite foils $L$, and hence
    $\P$-a.s.\ $\H^\infty(\m) \cap C \neq \emptyset$ implies
    $C$ is of class $\I/\F$.

    Conversely, it will be shown that if $C$ is class $\I/\F$, then $\H^\infty(\m) \cap
    C\neq \emptyset$.
    Indeed, redefine $\mathfrak{N}$ to be the collection of components $C$ of class $\I/\F$
    that have a minimum foil in the foil order.
    Letting $\n \subseteq \bigcup \mathfrak{N}$ be the union of minimum foils in $C$,
    it holds that $\n \cap C$ is the (finite) minimum foil in $C$ for each $C \in
    \mathfrak{N}$.
    Thus \Cref{noflowadaptedselection} implies $\n=\emptyset$ and hence
    $\mathfrak{N}=\emptyset$, $\P$-a.s.
    Now consider a $C$ of class $\I/\F$
    and an arbitrary foil $L$ of $C$.
    Since $L$ is finite there is a minimum $n$ such that $\H^n(L)$ is a single
    point.
    Let $C_0$ denote the subgraph of $G^\H$ of $L$ together with all descendants of elements of
    $L$ and all forefathers of elements of $L$ up to $\H^n(L)$.
    Then $C_0$ is an infinite connected graph with vertices of finite degree,
    and hence it contains an infinite simple path $\{X_i\}_{i \leq 0}$ with
    $\H(X_i) = X_{i+1}$ for each $i<0$ by K\"onig's
    infinity lemma (c.f.\ Theorem 6 in \cite{konig1990theory}).
    For $i>0$, define $X_i := \H^i(X_0)$.
    Then $\{X_i\}_{i \in \Z}$ is a bi-infinite path in $C$ satisfying $\H(X_i)
    = X_{i+1}$ for all $i \in \Z$, and thus
    $\{X_i\}_{i \in \Z} \subseteq \H^\infty(\m) \cap C$, in particular showing
    $\H^\infty(\m) \cap C \neq \emptyset$.
    It also holds that $\H^\infty(\m) \cap C \subseteq \{X_i\}_{i \in \Z}$ since
    for any $X \in \H^\infty(\m) \cap C$ it is possible to choose $n,m$ such that $\H^n(X) =
    \H^m(X_0) = X_m$.
    Uniqueness of primeval children then implies $X = X_{m-n}$.
    It follows that $\H^\infty(\m) \cap C = \{X_i\}_{i \in \Z}$.

    Thus it is shown that $\P$-a.s.\ infinite components $C$ are class $\I/\F$
    if and only if $\H^\infty(\m) \cap C \neq \emptyset$ and in this case
    $\H^\infty(\m) \cap C$ is a unique bi-infinite sequence $\{X_i\}_{i \in
        \Z}$ satisfying $\H(X_i) = X_{i+1}$.
    Since $\I/\F$ and $\I/\I$ are the only possible choices,
    by process of elimination it follows that $\P$-a.s.\ infinite components $C$ are of class
    $\I/\I$ if and only if $\H^\infty(\m) \cap C = \emptyset$.
\end{proof}

\subsection{A Counterexample on a Non-unimodular Group}\label{sec:classification-counterexample}

This example serves to show that the cardinality classification (\Cref{cardinalityclassification}) does not
hold for non-unimodular spaces.
It is an open question whether a more general classification for such
spaces exists.
Recall the standard first example of a non-unimodular group: the $ax+b$
group.
In this section,
\[
    \X = \left\{\pmat{a & b\\0 & 1}: a>0, b \in \R\right\}
\]
with matrix
multiplication and the
topology inherited from $\R^4$.
$\X$ is identified with the right half-plane in $\R^2$ by identifying
$(a,b)$ with $\pmat{a&b\\0&1}$.
In this notation 
\[
    (a,b)(c,d) = (ac, ad+b), \qquad{} (a,b)^{-1} =
    (\frac{1}{a},-\frac{b}{a}).
\]
Then, cf.\ \cite{hewitt2012abstract} Example 15.17 (g), $\X$ has a left-invariant Haar measure
\[
    \Haar(B) = \iint_B \frac{1}{a^2}\,da\,db 
\]
and modular function
\[
    \Delta(a,b) = \frac{1}{a}.
\]
Let $\m$ be a homogeneous Poisson point process on $\X$ with intensity
$\intensity \in (0,\infty)$.
Necessarily $\m$ is $\X$-stationary and simple. 
For all $(a,b) \in \X$ define the strip 
\[
    S(a,b) := [a,\infty) \times [b-\delta a,b+\delta a]
\]
for some fixed $\delta>0$.
Note that the definition is chosen so $(a,b) S(1,0) = S(a,b)$, where
here $(1,0) = e \in \X$.
Moreover, for any $(a,b) \in \X$,
\[
    \Haar(S(a,b)) 
    = \int_{b-\delta a}^{b+\delta a} \int_{a}^\infty \frac{1}{x^2}\,dx\,dy
    = \frac{1}{a} \cdot ((b+\delta a)-(b-\delta a))
    = 2 \delta
\]
so in particular $\m(S(a,b)) < \infty$ a.s.
By the Slivnyak-Mecke theorem (see \Cref{slivnyakmecke} in the appendix), $\m^! := \m-\delta_e$ is Poisson under $\P^\m$ with
$\E^\m[\m^!(B)] = \intensity \Haar(B)$.
Hence $\E^\m[\m^!(S(1,0))] = 2\delta\intensity$ and therefore $\m(S(1,0)) < \infty$,
$\P^\m$-a.s.
Equivalently, $\P$-a.s.\ $\m(S(X)) < \infty$ for all $X \in \m$ by
\Cref{palmpalmostsureequivalence}.
This leads to the \textdefn{strip point-shift} $\H$ where $\H(X)$ is defined to be
the right-most point of $\m$ in $S(X)$ for each $X \in \m$.
\begin{figure}[t]
    \centering
    \includegraphics[width=\textwidth]{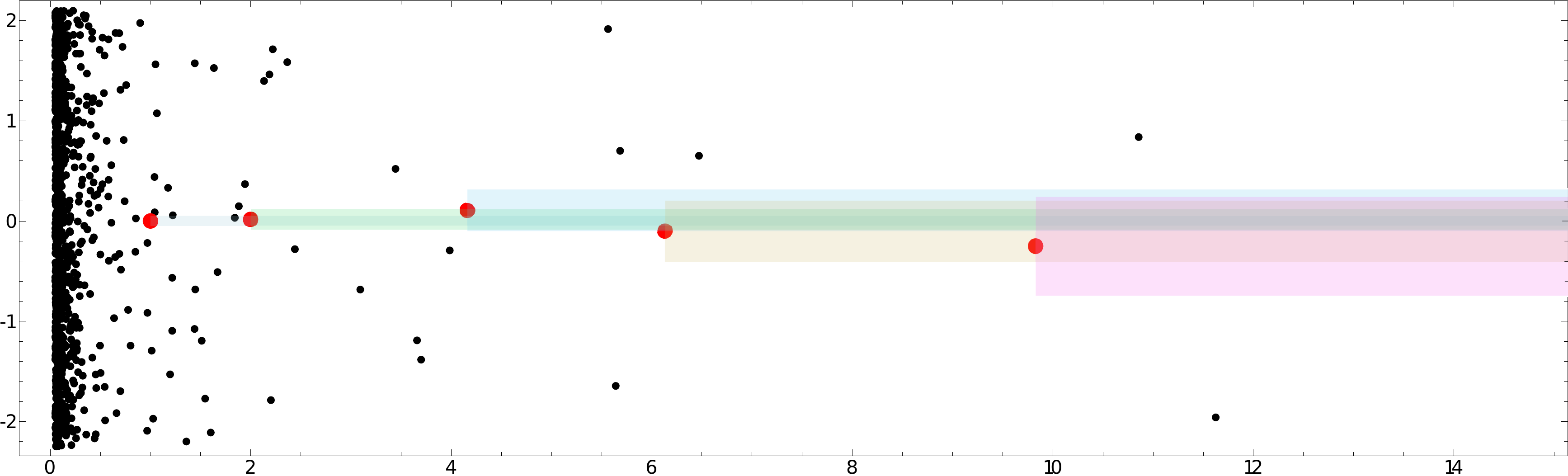}
    \caption{Iterates of $e$ under
        the strip point-shift on the $ax+b$ group.}
\end{figure}
One may theoretically resolve ties for right-most point using the lexicographic order on $\R^2$, but the interested
reader may note that
results in \Cref{sec:separating-points} will show that there is no need because almost surely each point $X\in\m$ has a unique first coordinate.

Now suppose that $2\delta\intensity < 1$.
It will be shown that $\P$-a.s.\ $\H^n(X)$ eventually becomes constant as $n \to
\infty$ for all $X \in \m$.
It suffices to show that under $\P^\m$ it holds that $\H^n(e)$ eventually becomes
constant.
Recall that the \textdefn{$n$-th factorial moment measure} of a counting
measure $\mu$ with representation $\mu = \sum_{i} \delta_{x_i}$ is
defined as $\mu^{(n)} := \sum_{i_1\neq \cdots \neq
    i_n} \delta_{(x_{i_1},\ldots,x_{i_n})}$, where the notation $i_1 \neq \cdots
\neq i_n$ means that $i_1,\ldots,i_n$ are all distinct.
Then
\begin{align*}
    &\E^\m\sum_{k=0}^\infty \m(S(\H^k(e))\setminus
    \{\H^{k}(e)\})\\
    & =\sum_{k=0}^\infty \E^\m \m(S(\H^k(e))\setminus
    \{\H^{k}(e)\})\\
    &\leq \sum_{k=0}^\infty \E^\m \int_{\X^{k+1}} 1_{x_1 \in S(e)} \cdots
        1_{x_{k+1}\in S(x_k)}\,(\m^!)^{(k+1)}(dx_1 \times \cdots \times dx_{k+1})\\
    &=\sum_{k=0}^\infty \E\int_{\X^{k+1}} 1_{x_1 \in S(e)} \cdots
    1_{x_{k+1}\in S(x_k)}\,\m^{(k+1)}(dx_1 \times \cdots \times dx_{k+1})\\
    &=\sum_{k=0}^\infty \int_{\X^{k+1}} 1_{x_1 \in S(e)} \cdots
    1_{x_{k+1}\in S(x_k)} \intensity^{k+1}\,\Haar(dx_{k+1})\cdots \Haar(dx_1)\\
    &=\sum_{k=0}^\infty (2\delta)^{k+1} \intensity^{k+1}\\
    &< \infty,
\end{align*}
where here the Slivnyak-Mecke theorem is used again, along with the fact that the factorial
moment measures of a Poisson point
process are just powers of the intensity measure, cf.\ Example 9.5 (d) in~\cite{daley2007introduction}.
Thus it must be that $\m(S(\H^k(e))\setminus\{\H^k(e)\}) = 0$ for all $k$
large, $\P^\m$-a.s.
That is, there are no points of $\m$ in $S(\H^k(e))$ besides $\H^k(e)$ itself.
Consequently, $\H^k(e)$ is a fixed point of $\H$ for large $k$ and $\H^k(e)$ is thus
eventually constant in $k$.
Equivalently, $\P$-a.s.\ for every $X \in \m$ it holds that $\H^k(X)$ is eventually
constant in $k$.

Next it will be shown that every fixed point of $\H$ is the image of infinitely
many $X \in \m$.
Again it is enough to show under $\P^\m$ that if $\H(e) = e$ then $e$ is the
image of infinitely many $X \in \m$.
This is accomplished by finding a region of points $(x,y)\in \X$ such that 
\begin{enumerate}[label=(\roman*)]
    \item  $(1,0) \in S(x,y)$, and
    \item $S(x,y) \cap ([1,\infty)\times \R) \subseteq S(1,0)$,
\end{enumerate}
which implies $\H$ would map a point of $\m$ at $(x,y)$ to $(1,0)$.
The condition (i) says $1 \geq x$ and $y-\delta x \leq 0
\leq y+\delta x$,
i.e.\ $-\delta x \leq y \leq \delta x$.
Condition (ii) is guaranteed if $[y-\delta x,y+\delta x] \subseteq
[-\delta,\delta]$, i.e.\
if $y \geq \delta(x-1)$ and $y \leq \delta(1-x)$.
The constraints
\begin{equation*}
    0 < x \leq 1,\quad -\delta x \leq y \leq \delta x,\quad y \leq
    \delta(1-x),\quad y \geq \delta(x-1),
\end{equation*}
bound a parallelogram $D$ with corners
\[
    (0,0),\quad (1/2,\delta/2),\quad (1,0),\quad (1/2,-\delta/2). 
\]
Then
\[
    \E^\m[\m^!(D)] = \intensity \Haar(D) \geq \intensity\int_0^{1/2}\int_{-\delta
        x}^{\delta x} \frac{1}{x^2}\,dy\,dx
    = \intensity\int_0^{1/2}\frac{2\delta}{x}\,dx = \infty
\]
so that the region $D$ contains infinitely many points of $\m$, $\P^\m$-a.s.
By construction, if $\H(e)=e$ then every $X \in \m \cap D$ has $\H(X) = e$, 
proving the claim.

Putting previous claims together, it holds that the foils and connected
components are identical because every component contains a fixed point,
and the foils and components are in bijection with the fixed points of
$\H$.
The connected component of a fixed point $Y$ of $\H$ is all $X \in \m$ that
are eventually sent to $Y$.
Thus all components and foils are infinite (class $\I/\I$).
However, the components are not
acyclic and $\H^\infty(\m) = \{X \in \m : \H(X) = X\} \neq \emptyset$,
contrary to what the
classification theorem would suggest for unimodular $\X$.
It follows that the properties of the cardinality classification cannot be
extended beyond the case of unimodular $\X$.

\section{Properties of Point-shifts}

\subsection{Mecke's Invariance Theorem}\label{sec:mecke-invariance-thm}

In the case of $\X=\R^d$, Mecke's invariance theorem shows that Palm probabilities 
are preserved under bijective point-shifts.
Even stronger, a point-shift is bijective if and only if it preserves Palm probabilities.
It will be shown in \Cref{meckethm} that if $\X$ is unimodular then this still
holds.
However, for non-unimodular $\X$ this is not so.
Precisely, the notion of isomodularity defined in the introduction will be elaborated upon, and
it will be shown that, amongst
bijective point-shifts, isomodular ones are exactly those that preserve Palm
probabilities (\Cref{isomodularinvariance}).

For the rest of the section, fix a flow-adapted simple point process 
$\m$ of intensity $\intensity \in (0,\infty)$, and a point-map $\h$
with associated point-shift $\H$.
The notation for the corresponding functions $\tau^\H$,
$h^+$, $h^-$, $H^+$, $H^-$, $\h^-$, $\H^-$ mentioned in the preliminaries is retained.

The simple case of Mecke's invariance theorem when $\X$ is unimodular follows.

\begin{corollary}[Mecke's Invariance Theorem]\label{meckethm} 
    Suppose that $\X$ is unimodular.
    Then $\H$ preserves $\P^\m$ if and only if $\H$ is bijective.
    That is, $\P^\m(\theta_\h^{-1} \in A) = \P^\m(A)$ for all $A \in \A$ if and only if
    $\H$ is bijective.
\end{corollary}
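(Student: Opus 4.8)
The plan is to deduce this directly from the test for bijectivity in \Cref{structuralpropertiesofpointshifts}(f) together with unimodularity. First I would record the routine measure-theoretic reduction: saying that $\H$ preserves $\P^\m$ means that the pushforward of $\P^\m$ under the map $\omega \mapsto \theta_\h^{-1}$ equals $\P^\m$, and two probability measures agree if and only if they assign the same $\E^\m$-integral to every nonnegative measurable function. Approximating such an $f:\Omega\to\R_+$ by simple functions and passing to the limit by monotone convergence, the condition ``$\P^\m(\theta_\h^{-1}\in A)=\P^\m(A)$ for all $A\in\A$'' is therefore equivalent to ``$\E^\m[f(\theta_\h^{-1})]=\E^\m[f]$ for all measurable $f:\Omega\to\R_+$'' (the case $f=1_A$ recovering the stated form).

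Next, since $\X$ is unimodular we have $\Delta\equiv 1$, so the identity \eqref{bijectivitytest} in \Cref{structuralpropertiesofpointshifts}(f) reads precisely $\E^\m[f(\theta_\h^{-1})]=\E^\m[f]$ for all measurable $f:\Omega\to\R_+$, and part (f) asserts this is equivalent to bijectivity of $\H$. Chaining this with the reduction of the first paragraph yields the corollary: $\H$ preserves $\P^\m$ $\iff$ $\E^\m[f(\theta_\h^{-1})]=\E^\m[f]$ for all nonnegative measurable $f$ $\iff$ $\H$ is bijective.

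If one prefers a self-contained argument that does not invoke part (f) but only the mass transport theorem, I would instead start from the change-of-Palm formula \eqref{changeofpalm0}, which under unimodularity becomes $\E^\m[f(\theta_\h^{-1})]=\E^\m[f\,\card(h^-)]$. If $\H$ is bijective then $\card(h^-)=1$ $\P^\m$-a.s.\ by \Cref{structuralpropertiesofpointshifts}(c), so the right-hand side is $\E^\m[f]$ and $\H$ preserves $\P^\m$. Conversely, if $\H$ preserves $\P^\m$ then $\E^\m[f\,\card(h^-)]=\E^\m[f]$ for all nonnegative measurable $f$; taking $f=1_{\{\card(h^-)=0\}}$ forces $\P^\m(\card(h^-)=0)=0$, and then taking $f=1$ gives $\E^\m[\card(h^-)-1]=0$ with nonnegative integrand, so $\card(h^-)=1$ $\P^\m$-a.s., whence $\H$ is bijective by \Cref{structuralpropertiesofpointshifts}(c).

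I do not anticipate a genuine obstacle: the substantive content is already absorbed into the mass transport theorem and \Cref{structuralpropertiesofpointshifts}, and unimodularity is used only to kill the factor $\Delta(\h^{-1})$. The sole points demanding care are the passage from ``preserves $\P^\m$ as a measure'' to ``equates $\E^\m$-integrals of all nonnegative measurable functions,'' and, in the self-contained converse, handling the event $\{\card(h^-)=0\}$ separately before exploiting that $\card(h^-)-1\geq 0$ on its complement.
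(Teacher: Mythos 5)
Your main argument is exactly the paper's proof: the paper simply applies the test for bijectivity in \Cref{structuralpropertiesofpointshifts}(f) and notes $\Delta\equiv 1$ on a unimodular group, and the measure-to-integral reduction you spell out is the routine step left implicit there. Your alternative self-contained route via \eqref{changeofpalm0} together with part (c) is also correct (including the careful handling of $\{\card(h^-)=0\}$ before using nonnegativity of $\card(h^-)-1$), but it essentially unwinds what part (f) already packages, so it is not a genuinely different approach.
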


\begin{proof}
    Apply \Cref{structuralpropertiesofpointshifts} (f), the test for bijectivity, and use the fact that $\Delta(x) = 1$ for
    all $x \in \X$.
\end{proof}

With Mecke's invariance theorem for unimodular $\X$ in place, one may ask about
non-unimodular $\X$. 
For these $\X$, which bijective point-shifts preserve Palm probabilities?
\Cref{bijectivitytest} shows that the obstruction is the factor
$\Delta(\h^{-1})$.
This motivates the definition of isomodularity, which says that a point-shift 
preserves the value of $\Delta(X)$ for each $X \in \m$.
Isomodularity, defined already in the introduction, is a special case of invariance of a subgroup under $\H$, which
is defined presently.

\begin{definition}
    A measurable subgroup $G \in \B(\X)$ of $\X$ is called
    \textdefn{$\H$-invariant} if $\P$-a.s.\ $\H(X)$ is in the same coset as $X$ for
    all $X \in \m$.
\end{definition}

Isomodularity of $\H$ is the same as the assumption that the subgroup $\{\Delta=1\}$ is $\H$-invariant.
Also note that if $\X$ is unimodular, then $\H$ is automatically isomodular.

A brief detour is taken to go through the equivalent descriptions of $\H$-invariance under $\P$
and $\P^\m$.

\begin{proposition}\label{preservesubgroupequivalences}
    Let $G\in\B(\X)$ a measurable subgroup of $\X$,
    and for each $x\in \X$ let $[x] := xG$ denote the coset of $x$.
    Then the following are equivalent
    \begin{enumerate}[label=(\alph*)]
        \item $G$ is $\H$-invariant, i.e.\ $\P$-a.s.\ $[\H(X)] = [X]$
            for all $X \in \m$,
        \item $\P^\m$-a.s.\ $[\h] = [e]$,
    \end{enumerate}
    and if $\H$ is bijective, the previous statements are also equivalent to
    \begin{enumerate}[label=(\alph*)]\setcounter{enumi}{2}
        \item $\P$-a.s.\ $[\H^-(X)] = [X]$ for all $X \in \m$,
        \item $\P^\m$-a.s.\ $[\h^-] = [e]$.
    \end{enumerate}
\end{proposition}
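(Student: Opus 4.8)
The plan is to deduce everything from \Cref{palmpalmostsureequivalence} together with the dictionary between point-shifts and point-maps, and then to link the $\H$-statements and the $\H^-$-statements by a trivial reindexing. Concretely, I would prove (a)$\iff$(b), then (c)$\iff$(d) (in the bijective case) by the same argument applied to the reverse point-shift, and finally (a)$\iff$(c) by a one-line substitution, closing the loop.

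\emph{Step 1: (a)$\iff$(b).} Set $A := \{\omega \in \Omega : \h(\omega) \in G\}$, which lies in $\A$ since $\h$ is measurable and $G \in \B(\X)$; note that $A = \{[\h] = [e]\}$ because $[e] = eG = G$. The key is to translate membership in $A$ under the flow. For $\omega$ with $x \in \m(\omega)$, the point-shift covariance \eqref{eqtn:point-shift-covariance} gives $\H(\omega,x) = x\,\h(\theta_x^{-1}\omega)$, so $\theta_x^{-1}\omega \in A$ holds exactly when $\h(\theta_x^{-1}\omega) \in G$, which holds exactly when $\H(x) = x\,\h(\theta_x^{-1}\omega) \in xG = [x]$, i.e.\ when $[\H(x)] = [x]$. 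Thus statement (b) is precisely $\P^\m(A) = 1$, and by the equivalence (a)$\iff$(b) of \Cref{palmpalmostsureequivalence} this is equivalent to $\P$-a.s.\ $\m(\{x : \theta_x^{-1}\omega \notin A\}) = 0$; since $\m$ is simple, this last condition says that $\P$-a.s.\ every $X \in \m$ satisfies $\theta_X^{-1}\omega \in A$, i.e.\ $[\H(X)] = [X]$, which is statement (a). (This is just the instance of \Cref{convertPandPalmDefn} for this particular property.)

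\emph{Step 2: (c)$\iff$(d) when $\H$ is bijective.} In this case $\H^-$ is a well-defined flow-adapted point-shift on $\m$ whose corresponding point-map is exactly $\h^-$ (this is how $\H^-$ and $\h^-$ were introduced). Running the argument of Step 1 verbatim, with $A' := \{\h^- \in G\}$ in place of $A$ and $\H^-, \h^-$ in place of $\H, \h$, yields (c)$\iff$(d).

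\emph{Step 3: (a)$\iff$(c) when $\H$ is bijective.} This is immediate. On the $\P$-full event where (a) holds and $\H, \H^-$ are mutually inverse on $\m$, given any $Z \in \m$ put $X := \H^-(Z) \in \m$; then $\H(X) = \H(\H^-(Z)) = Z$, so (a) gives $[Z] = [\H(X)] = [X] = [\H^-(Z)]$, which is (c). The reverse implication follows by the symmetric substitution $X := \H(Z)$. Chaining Steps 1--3 gives the full equivalence. There is no genuinely hard step here; the only points requiring care are the measurability of $A$ and $A'$, the bookkeeping that turns ``$\theta_x^{-1}\omega \in A$'' into the coset condition on $\H(x)$, and the appeal to simplicity of $\m$ to pass from ``an $\m$-null set of bad points'' to ``no bad points.'' The one subtlety worth flagging is that (a) and (c) are quantified over \emph{all} $X \in \m$ simultaneously, which is exactly why \Cref{palmpalmostsureequivalence} (rather than a bare Palm inversion identity) is the appropriate tool.
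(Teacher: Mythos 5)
Your proposal is correct and follows essentially the same route as the paper: (a)$\iff$(b) and (c)$\iff$(d) via \Cref{palmpalmostsureequivalence} applied to the point-map of $\H$ (resp.\ of $\H^-$), and (a)$\iff$(c) by substituting $X \mapsto \H^-(X)$ or $X \mapsto \H(X)$ using that $\H$ and $\H^-$ are inverses on $\m$. The extra bookkeeping you spell out (measurability of $A$, simplicity of $\m$, identification of $\h^-$ as the point-map of $\H^-$) is exactly what the paper leaves implicit.
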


\begin{proof}
    \begin{enumerate}
        \item[]
        \item[] \emph{(a)}$\iff$\emph{(b)}:
            The equivalence follows from \Cref{palmpalmostsureequivalence},
            so that $\P^\m$-a.s.\ $[\h] = [e]$ is equivalent to 
            $\P$-a.s.\ $[\h(\theta_X^{-1})] = [e]$ for all $X \in \m$,
            which is the same as $[\H(X)] = [X]$ after multiplying
            by $X$.
        \item[] \emph{(a)}$\iff$\emph{(c)}:
            Using that $\H$ and $\H^-$ are inverses,
            replace $X$ with $\H^-(X)$ in \emph{(b)} to get \emph{(c)}
            or replace $X$ with $\H(X)$ in \emph{(c)} to get \emph{(b)}.
        \item[] \emph{(c)}$\iff$\emph{(d)}:
            The proof is the same as \emph{(a)}$\iff$\emph{(b)}. 
    \end{enumerate}
\end{proof}

Since isomodularity plays an important role in what follows, the previous
result is restated for $G:=\{\Delta=1\}$ in the bijective case.

\begin{corollary}\label{isomodularequivalences}
    Let $\H$ be bijective, then the following are equivalent
    \begin{enumerate}[label=(\alph*)]
        \item $\H$ is isomodular, i.e. $\P$-a.s.\ $\Delta(\H(X)) = \Delta(X)$
            for all $X \in \m$,
        \item $\P^\m$-a.s.\ $\Delta(\h) = 1$,
        \item $\P$-a.s.\ $\Delta(\H^-(X)) = \Delta(X)$ for all $X \in \m$,
        \item $\P^\m$-a.s.\ $\Delta(\h^-) = 1$.
    \end{enumerate} 
\end{corollary}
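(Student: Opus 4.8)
The plan is to deduce this corollary directly from \Cref{preservesubgroupequivalences}, applied to the specific subgroup $G := \{x \in \X : \Delta(x) = 1\}$. The first step is to check that this $G$ is an admissible choice in \Cref{preservesubgroupequivalences}: since $\Delta : \X \to (0,\infty)$ is a continuous multiplicative homomorphism, its kernel $G = \{\Delta = 1\}$ is a closed (hence Borel) normal subgroup of $\X$, so $G \in \B(\X)$ as required.

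Next I would make the translation between cosets of $G$ and values of $\Delta$ explicit. For $x,y \in \X$ one has $y \in [x] := xG$ if and only if $\Delta(y) = \Delta(x)$: if $y = xg$ with $\Delta(g) = 1$ then $\Delta(y) = \Delta(x)\Delta(g) = \Delta(x)$, and conversely if $\Delta(y) = \Delta(x)$ then $\Delta(x^{-1}y) = \Delta(x)^{-1}\Delta(y) = 1$, so $x^{-1}y \in G$ and $y \in xG$. In particular $[e] = G$ since $\Delta(e) = 1$. Under this dictionary, the statement ``$[\H(X)] = [X]$ for all $X \in \m$'' is verbatim ``$\Delta(\H(X)) = \Delta(X)$ for all $X \in \m$'', the statement ``$[\h] = [e]$'' is ``$\Delta(\h) = 1$'', and similarly ``$[\H^-(X)] = [X]$'' and ``$[\h^-] = [e]$'' become the conditions (c) and (d) involving $\Delta(\H^-)$ and $\Delta(\h^-)$.

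Finally, since $\H$ is assumed bijective, \Cref{preservesubgroupequivalences} gives that its parts (a)--(d) (for this $G$) are all equivalent, and applying the dictionary above turns these into exactly the four statements (a)--(d) of the present corollary, completing the proof. I do not expect any genuine obstacle here --- this is really just a restatement of \Cref{preservesubgroupequivalences} for $G = \{\Delta = 1\}$ --- the only point warranting a line of justification is that $\{\Delta = 1\}$ is indeed a Borel subgroup, which is immediate from continuity of the modular function.
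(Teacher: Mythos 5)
Your proposal is correct and matches the paper's approach: the corollary is exactly the specialization of \Cref{preservesubgroupequivalences} to the subgroup $G=\{\Delta=1\}$, and your dictionary between cosets $xG$ and values of $\Delta$ (via $\Delta$ being a continuous homomorphism, so $G$ is a Borel subgroup and $y\in xG \iff \Delta(y)=\Delta(x)$) is the intended justification. Nothing further is needed.
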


Now the question of which bijective point-shifts preserve
Palm probabilities is answerable.

\begin{theorem}
    \label{isomodularinvariance}
    Suppose $\H$ is bijective.
    Then $\H$ preserves $\P^\m$ if and only if $\H$ is isomodular.
    That is, $\P^\m(\theta_\h^{-1} \in A) = \P^\m(A)$ for all $A \in \A$ 
    if and only if $\H$ is isomodular.
\end{theorem}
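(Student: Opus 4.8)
The plan is to reduce everything to the change-of-Palm identity \eqref{changeofpalm}, which for bijective $\H$ reads $\E^\m[f(\theta_\h^{-1})] = \E^\m[f/\Delta(\h^-)]$ for all measurable $f:\Omega\to\R_+$, combined with \Cref{isomodularequivalences}, which for bijective $\H$ identifies isomodularity of $\H$ with the statement that $\P^\m$-a.s.\ $\Delta(\h^-)=1$.

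First I would note that $\H$ preserves $\P^\m$, i.e.\ $\P^\m(\theta_\h^{-1}\in A)=\P^\m(A)$ for all $A\in\A$, is equivalent (by the usual approximation of non-negative measurable functions by simple functions) to $\E^\m[f(\theta_\h^{-1})]=\E^\m[f]$ for every measurable $f:\Omega\to\R_+$. Feeding this into \eqref{changeofpalm} shows that $\H$ preserves $\P^\m$ if and only if
\[
    \E^\m\!\left[f\left(\tfrac{1}{\Delta(\h^-)}-1\right)\right]=0
    \qquad\text{for all measurable } f:\Omega\to\R_+ .
\]

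For the ``if'' direction, suppose $\H$ is isomodular; then \Cref{isomodularequivalences} gives $\P^\m$-a.s.\ $\Delta(\h^-)=1$, so the displayed identity holds trivially and $\H$ preserves $\P^\m$. For the ``only if'' direction, suppose $\H$ preserves $\P^\m$, so the displayed identity holds for all $f\geq 0$. Taking $f:=1_{\{\Delta(\h^-)>1\}}$ makes the integrand $\leq 0$ everywhere and strictly negative on $\{\Delta(\h^-)>1\}$; since its $\E^\m$-integral vanishes, $\P^\m(\Delta(\h^-)>1)=0$. Symmetrically, taking $f:=1_{\{\Delta(\h^-)<1\}}$ yields $\P^\m(\Delta(\h^-)<1)=0$. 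Hence $\P^\m$-a.s.\ $\Delta(\h^-)=1$, which by \Cref{isomodularequivalences} means $\H$ is isomodular.

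The whole argument is short once \eqref{changeofpalm} is invoked. The only point requiring a little care is the final step: since the sign of $\frac{1}{\Delta(\h^-)}-1$ is not known a priori, I avoid plugging in $f=\Delta(\h^-)$ directly and instead test against the two indicators $1_{\{\Delta(\h^-)>1\}}$ and $1_{\{\Delta(\h^-)<1\}}$ separately, each of which makes the integrand of one sign. No integrability issues arise because every quantity involved is non-negative.
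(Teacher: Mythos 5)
Your proof is correct and follows essentially the same route as the paper: both directions rest on the change-of-Palm identity \eqref{changeofpalm} together with \Cref{isomodularequivalences}, testing against the indicators of $\{\Delta(\h^-)>1\}$ and $\{\Delta(\h^-)<1\}$ (the paper phrases the converse contrapositively and handles one set WLOG, but the computation is the same). The only cosmetic caveat is that your intermediate "displayed identity" should be read for $f$ with $\E^\m[f]<\infty$ to avoid an $\infty-\infty$ subtraction, which is automatic for the indicators you actually use.
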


\begin{proof}
    Suppose $\H$ is isomodular.
    Then $\Delta(\h^-) = 1$, $\P^\m$-a.s.\ by \Cref{isomodularequivalences}.
    Hence \eqref{changeofpalm} immediately implies
    $\H$ preserves $\P^\m$.
    If $\H$ is not isomodular,
    at least one of $\P^\m(\Delta(\h^-) > 1)$ and $\P^\m(\Delta(\h^-) < 1)$ is strictly positive.
    The cases are nearly identical, so assume $\P^\m(\Delta(\h^-)>1)>0$
    and take $A:= \{\Delta(\h^-) > 1\}$.
    Then take $f := 1_{A}$ in \eqref{changeofpalm}
    to find 
    \[
        \P^\m(\theta_\h^{-1} \in A) 
        = \E^\m\left[\frac{1_{\Delta(\h^-) > 1}}{\Delta(\h^-)}\right]
        < \E^\m\left[1_{\Delta(\h^-) > 1}\right]
        = \P^\m(A),
    \]
    showing that $\P^\m$ is not preserved. 
\end{proof}

\subsection{Reciprocal and Reverse of a Point-map}\label{sec:reciprocal-and-reverse}
A curious interplay between the reverse $\h^-$ and the
reciprocal $\h^{-1}$ of a point-map is investigated, and a characterization of when the two
have the same law under $\P^\m$ is given.

The notation of the previous section is retained.
That is, $\m$ is a flow-adapted simple point process of intensity $\intensity \in (0,\infty)$,
and $\h$ is a point-map
with associated point-shift $\H$.
The notation for the corresponding $\tau^\H$,
$h^+$, $h^-$, $H^+$, $H^-$, $\h^-$, $\H^-$ defined in the preliminaries is also retained.
Next follows another result along the lines of \Cref{structuralpropertiesofpointshifts} (f) and (g) 
which sparks interest in the distributional relationship between $\h^{-1}$ and $\h^-$.

\begin{corollary}\label{changeofhthm}
    Suppose $\H$ is bijective.
    For all $f:\X\to \R_+$ measurable it holds that
    \begin{equation}\label{changeofh0}
        \E^\m\left[f(\h^{-1})\Delta(\h^{-1})\right] =
        \E^\m\left[f(\h^-)\right],
    \end{equation}
    \begin{equation}\label{changeofh}
        \E^\m\left[f(\h^{-1})\right] = \E^\m\left[\frac{f(\h^-)}{\Delta(\h^-)}\right].
    \end{equation}
\end{corollary}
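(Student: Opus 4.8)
The plan is to reduce both identities to formulas already proved in \Cref{structuralpropertiesofpointshifts}, the only genuinely new ingredient being a suitable ``lift'' of the $\X$-valued function $f$ to an $\Omega$-valued function to which those results can be applied.

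First I would introduce $F\colon\Omega\to\R_+$ defined by $F(\omega):=f(\h^-(\omega))$. This is well-defined $\P^\m$-a.s.\ and measurable, since $\H$ is bijective and hence $\h^-$ exists as a measurable map by \Cref{structuralpropertiesofpointshifts} (c). The key observation, which is exactly the computation carried out in the proof of \Cref{structuralpropertiesofpointshifts} (g), is that $\P^\m$-a.s.
\[
    \h^-(\theta_\h^{-1}) = \h^{-1},
\]
so that $F(\theta_\h^{-1}) = f(\h^{-1})$ holds $\P^\m$-a.s.

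With this in hand, \eqref{changeofh0} follows from \Cref{structuralpropertiesofpointshifts} (d) applied to $F$: since $\H$ is bijective, $\card(h^-)=1$ $\P^\m$-a.s.\ by \Cref{structuralpropertiesofpointshifts} (c), whence
\[
    \E^\m[f(\h^{-1})\Delta(\h^{-1})]
    = \E^\m[F(\theta_\h^{-1})\Delta(\h^{-1})]
    = \E^\m[F\,\card(h^-)]
    = \E^\m[F]
    = \E^\m[f(\h^-)].
\]
For \eqref{changeofh} I would either apply \Cref{structuralpropertiesofpointshifts} (g) to the same $F$, obtaining $\E^\m[f(\h^{-1})] = \E^\m[F(\theta_\h^{-1})] = \E^\m[F/\Delta(\h^-)] = \E^\m[f(\h^-)/\Delta(\h^-)]$, or simply replace $f$ by $f/\Delta$ in \eqref{changeofh0} and cancel the factor $\Delta(\h^{-1})$ on the left-hand side. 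Either route is a one-line computation.

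The only step that requires genuine care — and hence the main obstacle — is the a.s.\ identity $\h^-(\theta_\h^{-1}) = \h^{-1}$ together with the a.s.\ well-definedness and measurability of $F$; but these all follow from bijectivity of $\H$, \Cref{palmpalmostsureequivalence}, and the chain of equalities already displayed in the proof of \Cref{structuralpropertiesofpointshifts} (g), so no new argument beyond bookkeeping is needed.
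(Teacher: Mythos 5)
Your proposal is correct and follows essentially the same route as the paper: both rest on the $\P^\m$-a.s.\ identity $\h^-(\theta_\h^{-1})=\h^{-1}$ (already established in the proof of \Cref{structuralpropertiesofpointshifts} (g)) and then substitute $f(\h^-)$ for $f$ in \eqref{changeofpalm0} and \eqref{changeofpalm}, with bijectivity giving $\card(h^-)=1$ so that the right-hand side of \eqref{changeofpalm0} simplifies. Your extra bookkeeping about measurability of $F$ and the alternative derivation of \eqref{changeofh} from \eqref{changeofh0} are fine but do not change the argument.
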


\begin{proof}
    Use the fact that $\P^\m$-a.s.\
    \[
        \h^-(\theta_\h^{-1}) = \h^{-1}(\h \h^-(\theta_\h^{-1})) = \h^{-1} \H^-(\h) =\h^{-1} \H^-(\H(e))= \h^{-1}\]
    and replace $f$ by $f(\h^-)$ in each of 
    \eqref{changeofpalm0} and \eqref{changeofpalm}.
\end{proof}

One sees in \eqref{changeofh} that non-unimodularity of $\X$ is, as usual, an
obstruction. Two more results relating the distributions of
$\Delta(\h^-)$ and $\Delta(\h^{-1})$ are given.
Then it is shown in \Cref{hinversehminussamelaw} that amongst 
bijective point-shifts, the isomodular ones are
precisely those for which $\h^{-1}$ and $\h^-$ have the same distribution under
$\P^\m$.
Recall that this is also the class of point-shifts that preserve Palm
probabilities by \Cref{isomodularinvariance}.

\begin{corollary}
    Let $\H$ be bijective, then for all $r>0$ it holds that
    \[
        r\P^\m(\Delta(\h^{-1})=r) = \P^\m(\Delta(\h^-)=r),
    \]
    and if this number is strictly positive then for all $A \in \A$
    \begin{equation*}
        \P^\m(\theta_{\h}^{-1} \in A\mid \Delta(\h^{-1})=r) = \P^\m(A \mid \Delta(\h^-)=r).
    \end{equation*}
\end{corollary}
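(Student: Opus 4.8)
The plan is to deduce both statements from the change-of-Palm identities already available, with essentially no new computation. For the first equality I would apply \eqref{changeofh0} of \Cref{changeofhthm} to the test function $f := 1_{\{\Delta = r\}} : \X \to \R_+$. Then $f(\h^{-1})\Delta(\h^{-1}) = r\,1_{\{\Delta(\h^{-1}) = r\}}$, so the left-hand side of \eqref{changeofh0} is $r\,\P^\m(\Delta(\h^{-1}) = r)$, while the right-hand side is $\E^\m[1_{\{\Delta(\h^-) = r\}}] = \P^\m(\Delta(\h^-) = r)$. Write $c$ for this common value and assume $c > 0$, so that both conditional probabilities appearing below are meaningful.

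For the conditional identity, fix $A \in \A$ and apply \eqref{changeofpalm} — valid since $\H$ is bijective — to the test function $f := 1_A \, 1_{\{\Delta(\h^-) = r\}} : \Omega \to \R_+$, which is measurable because $\h^-$ is a point-map and $\Delta$ is measurable. The key step is to rewrite $f(\theta_\h^{-1}\omega)$: using the $\P^\m$-a.s.\ identity $\h^-(\theta_\h^{-1}\omega) = \h^{-1}$ (established inside the proof of \Cref{structuralpropertiesofpointshifts}, part (g), and used again in \Cref{changeofhthm}) one gets $\Delta(\h^-(\theta_\h^{-1}\omega)) = \Delta(\h^{-1})$ and hence $f(\theta_\h^{-1}\omega) = 1_A(\theta_\h^{-1}\omega)\,1_{\{\Delta(\h^{-1}) = r\}}$. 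Thus the left-hand side of \eqref{changeofpalm} equals $\P^\m(\{\theta_\h^{-1} \in A\} \cap \{\Delta(\h^{-1}) = r\})$, and since $1/\Delta(\h^-) = 1/r$ on $\{\Delta(\h^-) = r\}$ the right-hand side equals $r^{-1}\,\P^\m(A \cap \{\Delta(\h^-) = r\})$. This gives $r\,\P^\m(\{\theta_\h^{-1} \in A\} \cap \{\Delta(\h^{-1}) = r\}) = \P^\m(A \cap \{\Delta(\h^-) = r\})$; dividing by the first-part equality $r\,\P^\m(\Delta(\h^{-1}) = r) = \P^\m(\Delta(\h^-) = r) = c$ and using the definition of conditional probability yields $\P^\m(\theta_\h^{-1} \in A \mid \Delta(\h^{-1}) = r) = \P^\m(A \mid \Delta(\h^-) = r)$.

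I expect the only real obstacle to be the bookkeeping observation in the second paragraph: that the event $\{\Delta(\h^{-1}) = r\}$ is precisely the $\theta_\h^{-1}$-preimage of $\{\Delta(\h^-) = r\}$, which is exactly what the relation $\h^-(\theta_\h^{-1}\omega) = \h^{-1}$ encodes. One could equivalently run the argument through \eqref{changeofpalm0} with $\card(h^-) = 1$ instead of \eqref{changeofpalm}; the factor $r$ then appears on the other side of the equation but the conclusion is the same. Everything else is routine manipulation, and the positivity hypothesis on the common value is used only to legitimize the conditioning.
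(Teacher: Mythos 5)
Your proposal is correct and follows essentially the same route as the paper: the first identity via \Cref{changeofhthm} (you use \eqref{changeofh0}, the paper uses the equivalent \eqref{changeofh}, with the same test function $1_{\{\Delta=r\}}$), and the conditional identity via \eqref{changeofpalm} with $f=1_A 1_{\{\Delta(\h^-)=r\}}$ together with the relation $\h^-(\theta_\h^{-1})=\h^{-1}$, exactly as in the paper.
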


\begin{proof}
    Fix $r>0$ and take $f(x):= 1_{\Delta(x) = r}$ in \eqref{changeofh}.
    One finds 
    \[
        \P^\m(\Delta(\h^{-1})=r) = \frac{1}{r}\P^\m(\Delta(\h^-)=r) =: p
    \]
    showing the first claim.
    Supposing that $p>0$, take
    $f:=1_{A} 1_{\Delta(\h^-)=r}$ in \eqref{changeofpalm}
    and use that $\P^\m$-a.s.\ $\h^-(\theta_\h^{-1}) = \h^{-1}$ to find
    \[
        \P^\m(\theta_\h^{-1} \in A, \Delta(\h^{-1})=r) 
        = \frac{1}{r}\P^\m(A, \Delta(\h^-)=r).
    \]
    Division by $p$ finishes the proof. 
\end{proof}

\begin{lemma}
    Let $\H$ be bijective, then for all $\alpha \in \R$ and $0\leq r \leq s \leq \infty$
    it holds that
    \begin{equation}\label{deltahpowers}
        \E^\m\left[ \Delta(\h^{-1})^{\alpha}1_{r \leq \Delta(\h^{-1}) \leq s} \right] 
        = \E^\m\left[ \Delta(\h^-)^{\alpha-1}1_{r \leq \Delta(\h^-) \leq s}\right].
    \end{equation}
\end{lemma}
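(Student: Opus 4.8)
The plan is to deduce this directly from \eqref{changeofh0} in \Cref{changeofhthm} by choosing the right test function. Since $\H$ is assumed bijective, $\h^-$ is well-defined and the identity $\E^\m[f(\h^{-1})\Delta(\h^{-1})] = \E^\m[f(\h^-)]$ holds for every measurable $f:\X\to\R_+$. The idea is to feed in a test function that manufactures the desired power of $\Delta$ together with the truncation indicator.

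Concretely, I would fix $\alpha\in\R$ and $0\leq r\leq s\leq\infty$, and apply \eqref{changeofh0} with
\[
    f(x) := \Delta(x)^{\alpha-1}\,1_{r\leq\Delta(x)\leq s}.
\]
This $f$ is a legitimate choice: $\Delta:\X\to(0,\infty)$ is measurable and strictly positive, so $\Delta(x)^{\alpha-1}$ is a well-defined non-negative (indeed positive) measurable function for any real exponent, and multiplying by the indicator of the measurable set $\{r\leq\Delta\leq s\}$ keeps it non-negative and measurable. Substituting, the left-hand side of \eqref{changeofh0} becomes $\E^\m\bigl[\Delta(\h^{-1})^{\alpha-1}1_{r\leq\Delta(\h^{-1})\leq s}\cdot\Delta(\h^{-1})\bigr] = \E^\m\bigl[\Delta(\h^{-1})^{\alpha}1_{r\leq\Delta(\h^{-1})\leq s}\bigr]$, while the right-hand side is $\E^\m[f(\h^-)] = \E^\m\bigl[\Delta(\h^-)^{\alpha-1}1_{r\leq\Delta(\h^-)\leq s}\bigr]$, which is exactly \eqref{deltahpowers}. (One could equally well use \eqref{changeofh} with $f(x):=\Delta(x)^{\alpha}1_{r\leq\Delta(x)\leq s}$, dividing by $\Delta(\h^-)$ on the right; this gives the same identity.)

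There is essentially no obstacle here — the content of the lemma is entirely carried by \Cref{changeofhthm}, and the proof is a one-line substitution. The only point worth flagging is that both sides of \eqref{deltahpowers} may equal $+\infty$ (for instance when $r=0$ and $\alpha<1$, so that $\Delta^{\alpha-1}$ is unbounded near $0$); since \eqref{changeofh0} is an identity in $[0,\infty]$ valid for all non-negative measurable $f$, this causes no difficulty and the stated equality holds in $[0,\infty]$ without any integrability hypothesis.
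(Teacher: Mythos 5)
Your proof is correct and essentially identical to the paper's: the paper takes $f(x) := \Delta(x)^\alpha 1_{r \leq \Delta(x)\leq s}$ in \eqref{changeofh}, which is precisely the alternative you mention parenthetically, and your primary route through \eqref{changeofh0} is just the other display of the same corollary. Nothing is missing.
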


\begin{proof}
    Take $f(x) := \Delta(x)^\alpha 1_{r \leq \Delta(x)\leq s}$ in \eqref{changeofh}.
\end{proof}

\begin{theorem}\label{hinversehminussamelaw}
    Let $\H$ be bijective,
    then $\h^{-1}$ and $\h^-$ have the same law under $\P^\m$
    if and only if $\H$ is isomodular.
\end{theorem}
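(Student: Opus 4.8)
The plan is to derive both implications from the change-of-point-map identities of \Cref{changeofhthm}, applied to integrands that depend on $x$ only through the modular value $\Delta(x)$.

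For the direction ``isomodular $\Rightarrow$ same law'', suppose $\H$ is isomodular. By \Cref{isomodularequivalences} one has $\Delta(\h^-) = 1$, $\P^\m$-a.s. Substituting this into \eqref{changeofh} yields $\E^\m[f(\h^{-1})] = \E^\m[f(\h^-)/\Delta(\h^-)] = \E^\m[f(\h^-)]$ for every measurable $f:\X \to \R_+$, which says precisely that $\h^{-1}$ and $\h^-$ have the same law under $\P^\m$.

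For the converse, assume $\h^{-1}$ and $\h^-$ have the same law under $\P^\m$; then $\Delta(\h^{-1})$ and $\Delta(\h^-)$ have a common law, call it $\mu$, on $(0,\infty)$. The main step is to apply \eqref{changeofh0} to test functions of the form $f = g\circ\Delta$, with $g:(0,\infty)\to\R_+$ measurable, which gives
\[
    \E^\m[g(\Delta(\h^{-1}))\,\Delta(\h^{-1})] = \E^\m[g(\Delta(\h^-))],
\]
i.e.\ $\int_{(0,\infty)} g(r)\,r\,\mu(dr) = \int_{(0,\infty)} g(r)\,\mu(dr)$, so that $\int_{(0,\infty)} g(r)\,(r-1)\,\mu(dr) = 0$ for every nonnegative measurable $g$. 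Choosing $g = 1_{(1,\infty)}$ forces $\mu((1,\infty)) = 0$ since $r-1 > 0$ there, and choosing $g = 1_{(0,1)}$ forces $\mu((0,1)) = 0$ since $r-1 < 0$ there. Hence $\mu = \delta_1$, so $\Delta(\h^-) = 1$, $\P^\m$-a.s., and \Cref{isomodularequivalences} gives that $\H$ is isomodular.

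I do not anticipate a real obstacle: equality of laws is exploited only on the one-dimensional ``shadow'' given by $\Delta$, and the remaining measure-theoretic deduction --- that $\int g(r)(r-1)\,\mu(dr) = 0$ for all nonnegative measurable $g$, tested against the indicators of $\{r>1\}$ and $\{r<1\}$, pins $\mu$ at $1$ --- is routine. If one prefers, the same argument runs through \eqref{changeofh} instead of \eqref{changeofh0}, producing $\int g(r)(1 - 1/r)\,\mu(dr) = 0$ with the identical conclusion.
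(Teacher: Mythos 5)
Your proof is correct and takes essentially the same route as the paper: the forward direction is identical (via \Cref{isomodularequivalences} and \eqref{changeofh}), and the converse likewise pushes the equal-laws hypothesis through the change-of-variable identity of \Cref{changeofhthm} restricted to functions of $\Delta$, using that the weight factor is $>1$ on $\{\Delta>1\}$ and $<1$ on $\{\Delta<1\}$ to force $\Delta(\h^{-1})=1$ a.s. The only difference is cosmetic: you test \eqref{changeofh0} directly against $1_{(1,\infty)}(\Delta)$ and $1_{(0,1)}(\Delta)$ (where all integrals are finite, so the subtraction is legitimate), while the paper routes through the power-function identity \eqref{deltahpowers} at $\alpha=1$; this is a mild streamlining rather than a different argument.
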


\begin{proof}
    Suppose $\H$ is isomodular.
    Then by \Cref{isomodularequivalences}, 
    $\P^\m$-a.s.\ $\Delta(\h)=\Delta(\h^-) =1$ and thus
    \eqref{changeofh} shows that $\h^{-1}$ and $\h^-$ have the same
    law under $\P^\m$.

    Next suppose that $\h^{-1}$ and $\h^-$ have the same law under $\P^\m$.
    Then 
    \begin{equation}\label{hdeltaind}
        \E^\m[\Delta(\h^{-1})^\alpha1_{r \leq \Delta(\h^{-1}) \leq s}] 
        = \E^\m[\Delta(\h^-)^\alpha1_{r \leq \Delta(\h^-) \leq s}]
    \end{equation}
    for
    all $\alpha \in \R$ and all $0 \leq r \leq s \leq \infty$.
    But then for all $\alpha \in \R$ and all $0 \leq r \leq s \leq \infty$
    \begin{align*}
        \E^\m[\Delta(\h^{-1})^{\alpha+1}1_{r \leq \Delta(\h^{-1}) \leq s}] 
        &= \E^\m[\Delta(\h^-)^\alpha1_{r \leq \Delta(\h^-) \leq s}] 
        & \text{(by \eqref{deltahpowers})} \\
        &= \E^\m[\Delta(\h^{-1})^{\alpha}1_{r \leq \Delta(\h^{-1}) \leq s}]
        & \text{(by \eqref{hdeltaind})}\\
        &= \E^\m[\Delta(\h^-)^{\alpha-1}1_{r \leq \Delta(\h^{-1}) \leq s}].
        & \text{(by \eqref{deltahpowers})}
    \end{align*}
    Taking $\alpha:=1, r:=1, s:= \infty$
    \[
        \E^\m[\Delta(\h^{-1})^{2}1_{1 \leq \Delta(\h^{-1})}] 
        = \E^\m[\Delta(\h^{-1})1_{1 \leq \Delta(\h^{-1})}]
    \]
    which is absurd unless $\Delta(\h^{-1}) \leq 1$, $\P^\m$-a.s.
    It also holds that with  $\alpha:=1, r:= 0, s:= 1$,
    \[
        \E^\m[\Delta(\h^{-1})^{2}1_{\Delta(\h^{-1}) \leq 1}] 
        = \E^\m[\Delta(\h^{-1})1_{\Delta(\h^{-1}) \leq 1}],
    \]
    which is absurd unless $\Delta(\h^{-1}) \geq 1$, $\P^\m$-a.s.
    It follows that $\Delta(\h^{-1}) =1$, $\P^\m$-a.s.
    By \Cref{isomodularequivalences} the result follows. 
\end{proof}

\subsection{Separating Points of a Point Process}\label{sec:separating-points}

In this section a notion of a
function separating points of a point process is introduced.
For the remainder of the section, $\m$ is a simple and flow-adapted point process of intensity $\intensity \in (0,\infty)$.

\begin{definition}
    Let $S$ be a set, $f:\X \to S$, and suppose
    that $\P$-a.s.\ no distinct $X,Y \in \m$ have $f(X) = f(Y)$.
    Then say that $f$ \textdefn{separates points} of $\m$.
    Similarly, say that a fixed partition $\{B_i\}_{i \in J}$ of $\X$
    \textdefn{separates points} of $\m$ if $\P$-a.s.\ no
    $B_i$ contains more than $1$ point of $\m$.
\end{definition}

When separation of points occurs is studied by proving a general result
concerning 
when there cannot be an $n$-tuple of distinct points of $\m$ satisfying a
given constraint.
Recall again that $\mu^{(n)} = \sum_{i_1 \neq \cdots \neq i_n} \delta_{(x_{i_1},\ldots,x_{i_n})}$ denotes the $n$-th factorial moment measure of a measure $\mu = \sum_i \delta_{x_i}$,
and $\mu^! = \mu - \delta_e$.

\begin{theorem}\label{generalconditionforseparatingpoints}
    Let $(S,\Sigma)$ be a measurable space and fix $M \in \Sigma$.
    Let $F:\X\times \X^n \to S$ be measurable, and
    suppose that for all $y=(y_1,\ldots,y_n) \in (\X\setminus \{e\})^n$,
    or more generally that for $\E^\m[(\m^!)^{(n)}]$-a.e.\ $y \in \X^n$, 
    \[
        \Haar(x \in \X : F(x,xy) \in M) = 0.
    \]
    Then $\P$-a.s.\ no $n+1$ distinct $X,Y_1,\ldots,Y_n\in \m$ have
    $F(X,Y_1,\ldots,Y_n) \in M$.
\end{theorem}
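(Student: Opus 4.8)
The plan is to show that the expected number of ``bad'' $(n+1)$-tuples vanishes. Set
\[
    Z := \int_{\X^{n+1}} 1_{F(x_0,x_1,\ldots,x_n)\in M}\,\m^{(n+1)}(d(x_0,x_1,\ldots,x_n)),
\]
which, since $\m$ is simple, counts precisely the ordered tuples of \emph{distinct} points $X,Y_1,\ldots,Y_n\in\m$ with $F(X,Y_1,\ldots,Y_n)\in M$; as $Z\geq 0$, it suffices to prove $\E[Z]=0$. The first step is to peel off the leading coordinate via $\m^{(n+1)}(d(x_0,\ldots,x_n)) = \m(dx_0)\,(\m-\delta_{x_0})^{(n)}(d(x_1,\ldots,x_n))$, so that
\[
    \E[Z] = \E\int_\X \Phi(x_0,\omega)\,\m(dx_0),\qquad \Phi(x_0,\omega):=\int_{\X^n}1_{F(x_0,x_1,\ldots,x_n)\in M}\,(\m(\omega)-\delta_{x_0})^{(n)}(d(x_1,\ldots,x_n)).
\]
Keeping the remaining $n$ coordinates inside an ordinary factorial moment measure (rather than passing to a higher-order Palm measure) is what lets us work with the first-order Palm measure $\P^\m$, which is the only one introduced here.

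The second step is to apply the refined Campbell formula attached to $\P^\m$, an immediate consequence of the definition of the Palm measure, rewriting $\E\int_\X \Phi(x_0,\omega)\,\m(dx_0)$ as $\intensity\int_\X \E^\m[\Phi(x,\theta_x\omega)]\,\Haar(dx)$. Here flow-adaptedness of $\m$ does the real work: under $\P^\m$ one has $e\in\m$ a.s., and $\m(\theta_x\omega)=\{xY:Y\in\m(\omega)\}$, so the atom of $\m(\theta_x\omega)$ at $x$ is the image of the atom of $\m(\omega)$ at $e$. Consequently $\m(\theta_x\omega)-\delta_x$ is the pushforward of $\m^!(\omega)=\m(\omega)-\delta_e$ under left translation by $x$, whence $(\m(\theta_x\omega)-\delta_x)^{(n)}$ is the pushforward of $(\m^!(\omega))^{(n)}$ under $(z_1,\ldots,z_n)\mapsto(xz_1,\ldots,xz_n)$, and $\Phi(x,\theta_x\omega)=\int_{\X^n}1_{F(x,xz_1,\ldots,xz_n)\in M}\,(\m^!)^{(n)}(d(z_1,\ldots,z_n))$. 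After a couple of applications of Tonelli (all integrands nonnegative, $\Haar$ is $\sigma$-finite, and $(\m^!)^{(n)}$ is $\P^\m$-a.s.\ locally finite), this yields
\[
    \E[Z] = \intensity\,\E^\m\!\left[\int_{\X^n}\Haar\big(\{x\in\X:F(x,xz)\in M\}\big)\,(\m^!)^{(n)}(dz)\right],\qquad xz:=(xz_1,\ldots,xz_n).
\]

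The third step is to invoke the hypothesis. Because $\m$ is simple, $(\m^!)^{(n)}$ is $\P^\m$-a.s.\ supported on tuples $z$ with distinct coordinates, all different from $e$, hence lying in $(\X\setminus\{e\})^n$; alternatively one uses directly the stated $\E^\m[(\m^!)^{(n)}]$-a.e.\ version. Either way the integrand $\Haar(\{x:F(x,xz)\in M\})$ vanishes $(\m^!)^{(n)}$-a.e.\ for $\P^\m$-a.e.\ $\omega$, so the inner integral is $0$, giving $\E[Z]=0$ and thus $Z=0$ $\P$-a.s., which is the assertion. The routine points are measurability of $\Phi$ and of the translation maps, and the standard extension of the refined Campbell identity from product functions $h(x,\omega)=w(x)1_A(\omega)$ to arbitrary nonnegative $h$. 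The step I expect to demand the most care is the translation bookkeeping in the second paragraph: verifying that deleting the atom of $\m(\theta_x\omega)$ at $x$ is the same as deleting the atom of $\m(\omega)$ at $e$ --- this is exactly what turns the fixed map $F$ into the diagonal form $F(x,xz)$ that appears in the hypothesis --- together with making sure the shift in the Campbell formula enters as $\theta_x$ and not $\theta_x^{-1}$.
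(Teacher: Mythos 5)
Your proof is correct and follows essentially the same route as the paper: both reduce the question, via the refined Campbell theorem and flow-adaptedness (so that under $\P^\m$ the deleted atom at $x$ becomes the deleted atom at $e$ and the configuration enters as $F(x,xz)$ integrated against $(\m^!)^{(n)}$), to the hypothesis that $\Haar(x:F(x,xy)\in M)=0$. The only difference is cosmetic: you show the expected number of bad ordered $(n+1)$-tuples vanishes using the disintegration $\m^{(n+1)}(d(x_0,\ldots,x_n))=\m(dx_0)\,(\m-\delta_{x_0})^{(n)}(d(x_1,\ldots,x_n))$, whereas the paper bounds the probability of existence of a bad tuple by the same kind of expectation.
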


\begin{proof}
    By straight calculations,
    \begin{align*}
        &\P(\exists X \in \m, Y \in \m^{(n)}: (X,Y) \in \m^{(n+1)}, F(X,Y) \in M)\\
        &\leq \E \int_\X 1_{\exists Y \in \m^{(n)}: \forall i,
            Y_i \neq x, F(x,Y) \in M}\,\m(dx)\\
        &\leq \E \int_\X \m^{(n)}(\theta_e,\{y \in \X^n: \forall
        i, y_i  \neq x, F(x,y) \in M\})\, \m(dx)\\
        &=\intensity\E^\m\int_\X \m^{(n)}(\theta_x,\{y \in
        \X^n,\forall i, y_i \neq x, F(x,y) \in M\})\,\Haar(dx)\\
        &=\intensity\E^\m\int_\X \m^{(n)}(\theta_e,\{x^{-1}y: y \in \X^n,
        \forall i, y_i \neq x, F(x,y) \in M\})\,\Haar(dx)\\
        &=\intensity\E^\m\int_\X \m^{(n)}(\theta_e,\{y\in \X^n, \forall i, xy_i \neq
        x, F(x,xy)\in M\})\,\Haar(dx)\\
        &=\intensity\E^\m\int_\X\int_\X 1_{F(x,xy) \in
            M}\,(\m^!)^{(n)}(dy)\,\Haar(dx)\\
        &=\intensity\E^\m\int_\X \Haar(x \in \X : F(x,xy)\in
        M)\,(\m^!)^{(n)}(dy)\\
        &= 0,
    \end{align*}
    where in the third equality the refined Campbell theorem, stated in the appendix as~\Cref{CLMM}, is used.
    This proves the claim.
\end{proof}

\Cref{generalconditionforseparatingpoints} immediately gives a condition for
separating points of $\m$.

\begin{corollary}[Condition for Separating Points]\label{conditionforseparatingpoints}
    Let $(S,\Sigma)$ be a measurable space, $f:\X \to S$ measurable, and suppose
    for all $y \neq e$, or more generally for $\E^\m[\m^!]$-a.e.\ $y\in \X$,
    \[
        \Haar(x \in \X : f(x) = f(xy)) = 0.
    \]
    Then $f$ separates points of $\m$. Implicit in the previous line is the assumption 
    that the sets $\{x \in \X: f(x) = f(xy)\}$ are measurable for all $y \in \X$.
    This is automatic if $(S,\Sigma)$ is a standard measurable space, or more
    generally if $S\times S$ has measurable diagonal.
\end{corollary}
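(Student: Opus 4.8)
The plan is to derive Corollary~\ref{conditionforseparatingpoints} directly from Theorem~\ref{generalconditionforseparatingpoints} by specializing to $n=1$. First I would set $n := 1$, take the measurable space $(S \times S, \Sigma \otimes \Sigma)$ in the role of the target space, and define $F:\X \times \X \to S \times S$ by $F(x,y) := (f(x), f(y))$, which is measurable since $f$ is. The distinguished measurable set is the diagonal $M := \{(s,s) : s \in S\} \in \Sigma \otimes \Sigma$; for this to make sense one needs the diagonal to be measurable, which is exactly the hypothesis flagged at the end of the statement (automatic for standard measurable spaces, or whenever $S \times S$ has measurable diagonal).

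Next I would check that the hypothesis of Theorem~\ref{generalconditionforseparatingpoints} translates precisely into the hypothesis of the corollary. With the above choices, for $y \in \X$ one has $F(x, xy) \in M$ if and only if $f(x) = f(xy)$, so $\{x \in \X : F(x,xy) \in M\} = \{x \in \X : f(x) = f(xy)\}$, and the assumed condition $\Haar(x \in \X : f(x) = f(xy)) = 0$ for all $y \neq e$ (or for $\E^\m[\m^!]$-a.e.\ $y$) is literally the condition $\Haar(x : F(x,xy) \in M) = 0$ required by the theorem. Measurability of these slices, needed to even state the condition, is again guaranteed by measurability of $F$ together with measurability of the diagonal.

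Then Theorem~\ref{generalconditionforseparatingpoints} yields that $\P$-a.s.\ there are no two distinct $X, Y \in \m$ with $F(X,Y) \in M$, i.e.\ with $f(X) = f(Y)$. By definition this says exactly that $f$ separates points of $\m$, completing the proof. There is essentially no obstacle here: the corollary is a direct specialization, and the only subtlety worth naming explicitly is the measurability of the diagonal of $S \times S$, which is why that caveat appears in the statement; all the analytic work (the Campbell-theorem computation) has already been done in the proof of Theorem~\ref{generalconditionforseparatingpoints}.
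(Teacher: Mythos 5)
Your proposal is correct and matches the paper's own proof exactly: the paper also applies \Cref{generalconditionforseparatingpoints} with $n:=1$, $F(x,y):=(f(x),f(y))$, and $M$ the diagonal of $S\times S$. Your explicit remarks on measurability of the diagonal and the translation of the hypothesis are just a more detailed writing-out of the same specialization.
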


\begin{proof}
    Take $n:=1$, $F(x,y) := (f(x),f(y))$ for all $x,y \in \X$, and
    take $M$ to be the diagonal of $S \times S$, then apply
    \Cref{generalconditionforseparatingpoints}.
\end{proof}

\Cref{conditionforseparatingpoints} generalizes the well-known theorem in 
$\X = \R^d$
that a stationary point process has not two points equidistant from $0$.
That would be the case of $f(x) := |x|$.
Not all $\X$ have this property though.
Indeed, if $\X$ is a countable group with the discrete distance $d(x,y) :=
1_{x\neq y}$, then
$\Haar(x \in \X: d(x,e) = d(xy,e))>0$ for all $y \neq e$
so the result does not apply if $\X$ has more than one element.

The next results can be used to show that there is no need to resolve ties when defining a point-shift
in some situations. Intuitively, if a set $B$ is small from the typical point's perspective,
then no shift of $B$ will contain more than one point of $\m$.

\begin{proposition}\label{separatepointsbyshiftedsets}
    Let $B \in \B(\X)$ with $e \in B$.
    If $\E^\m[\m^!(B)] = 0$, then $\P$-a.s.\ for all $X \in \m$ it holds that $\m(Xb: b \in B)
    = 1$, i.e.\ $X$ is the unique point of $\m$ inside $\{Xb : b \in B\}$.
\end{proposition}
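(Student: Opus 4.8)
The plan is to reduce the statement to \Cref{palmpalmostsureequivalence}. First I would note that $\m^!(B) \ge 0$ together with $\E^\m[\m^!(B)] = 0$ forces $\m^!(B) = 0$, $\P^\m$-a.s.; since $e \in B$ and $\m^! = \m - \delta_e$ under $\P^\m$, this gives $\m(B) = \m^!(B) + 1 = 1$, $\P^\m$-a.s. Consequently the event
\[
    A := \{\omega \in \Omega : \m(\omega, B) = 1\},
\]
which belongs to $\A$ because it is defined through the evaluation map $\mu \mapsto \mu(B)$, satisfies $\P^\m(A) = 1$.

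Next I would apply \Cref{palmpalmostsureequivalence}: the equivalence (a)$\iff$(b) yields $\P(\m(\{x \in \X : \theta_x^{-1}\omega \notin A\}) = 0) = 1$. The one computation to carry out is to identify the event $\theta_x^{-1}\omega \notin A$ in the original frame. Since $\theta$ is a left $\X$-action, $\theta_x^{-1} = \theta_{x^{-1}}$, so the covariance identity \eqref{eqtn:covariance} applied with the group element $x^{-1}$ gives $\m(\theta_x^{-1}\omega, B) = \m(\theta_{x^{-1}}\omega, B) = \m(\omega, xB)$. Hence $\theta_x^{-1}\omega \in A$ is exactly the statement $\m(\omega, xB) = 1$, and the conclusion of \Cref{palmpalmostsureequivalence} becomes: $\P$-a.s., $\m(\omega, \{x \in \X : \m(\omega, xB) \neq 1\}) = 0$, i.e.\ $\P$-a.s.\ every $X \in \m$ satisfies $\m(XB) = 1$. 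Since $X = Xe \in XB$ because $e \in B$, the unique point counted is $X$ itself, which is precisely the claim.

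I do not expect a genuine obstacle here; the only place requiring attention is the bookkeeping in the second step, namely making sure that passing to the inverse flow translates $B$ to $xB$ rather than to $x^{-1}B$ — but this is immediate from \eqref{eqtn:covariance}. (Measurability of the set $\{x \in \X : \theta_x^{-1}\omega \notin A\}$ is already built into the statement of \Cref{palmpalmostsureequivalence}, so no separate argument is needed.)
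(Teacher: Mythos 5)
Your proposal is correct and takes essentially the same route as the paper: both deduce from $\E^\m[\m^!(B)]=0$ that $\m^!(B)=0$ holds $\P^\m$-a.s.\ and then transfer this to a $\P$-a.s.\ statement about every $X\in\m$ via \Cref{palmpalmostsureequivalence} and the covariance identity. The only cosmetic difference is that you work with the event $\{\m(B)=1\}$ while the paper uses $\{\m(B\setminus\{e\})=0\}$.
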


\begin{proof}
    The hypotheses imply $\P^\m$-a.s.\ $\m(B\setminus\{e\}) = 0$.
    By \Cref{palmpalmostsureequivalence}, $\P$-a.s.\ all $X \in \m$
    are such that $T_{X^{-1}}\m(B\setminus\{e\}) = 0$, i.e. $\m(\{Xb:b \in B\} \setminus
    \{X\}) = 0$, and hence $\m(Xb: b \in B) = 1$.
\end{proof}

For example, recall the strip point-shift on the $ax+b$ group of \Cref{sec:classification-counterexample}.
It was defined by sending a point $X$ to the right-most point in a certain strip in the plane.
In that case, take $B:= \{1\} \times \R$ in the previous result to find that the points of $\m$ have unique first coordinates.
Hence there are no ties for right-most point.

Finally, the previous result is restated in the case that $B$ is a subgroup and applied to see
that the only way for $\H$ to preserve a small subgroup from the typical point's perspective
is to act as the identity.

\begin{corollary}\label{distinctcosets}
    Let $G\in \B(\X)$ a subgroup of $\X$.
    If $\E^\m[\m^!(G)] = 0$, then the cosets of $G$ separate points of $\m$.
    \qed{}
\end{corollary}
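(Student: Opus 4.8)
The plan is to deduce this immediately from \Cref{separatepointsbyshiftedsets} applied with the choice $B := G$. First I would note that $G$ is an admissible choice of set: since $G$ is a subgroup, $e \in G$, and $G \in \B(\X)$ by hypothesis, so the proposition applies. The hypothesis $\E^\m[\m^!(G)] = 0$ is exactly the hypothesis required there, so the conclusion gives: $\P$-a.s., for every $X \in \m$ one has $\m(\{Xg : g \in G\}) = 1$, i.e.\ $\m(XG) = 1$.

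It then remains only to translate ``$\m(XG) = 1$ for all $X \in \m$'' into the statement that the (left) cosets of $G$ separate points of $\m$. The key observation is that the sets $\{xG : x \in \X\}$ form a fixed partition of $\X$, and that for $X \in \m$ the coset containing $X$ is precisely $XG$. Thus, working on the almost sure event provided by \Cref{separatepointsbyshiftedsets}, if $X, Y \in \m$ lie in a common coset, that coset is $XG = YG$, and since $\m(XG) = 1$ we conclude $X = Y$. Hence no coset of $G$ contains more than one point of $\m$, which is the definition of the cosets of $G$ separating points of $\m$. There is no real obstacle here: the entire content has already been established in \Cref{separatepointsbyshiftedsets}, and this corollary is just the specialization $B = G$ together with the remark that $XG$ is the coset of $X$.
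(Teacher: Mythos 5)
Your proposal is correct and is exactly the intended argument: the paper marks this corollary with \qed{} precisely because it is the specialization $B:=G$ of \Cref{separatepointsbyshiftedsets} (valid since $e\in G$ for a subgroup), together with the observation that $XG$ is the coset of $X$, so no coset can contain two distinct points of $\m$. Nothing further is needed.
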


\begin{corollary} \label{isoisidentity}
    Let $G \in \B(\X)$ a subgroup of $\X$.
    If $\E^\m[\m^!(G)] = 0$ but $G$ is $\H$-invariant for some point-shift $\H$,
    then $\H$ is the identity point-shift $\P$-a.s.
\end{corollary}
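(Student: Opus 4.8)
The plan is to obtain this as an immediate consequence of \Cref{distinctcosets} together with the definitions of point-shift and $\H$-invariance. First I would invoke \Cref{distinctcosets}: since $\E^\m[\m^!(G)] = 0$ by hypothesis, the cosets of $G$ separate points of $\m$, meaning that $\P$-a.s.\ no two distinct points of $\m$ lie in the same coset of $G$. Equivalently, $\P$-a.s.\ each coset $xG$ contains at most one point of $\m$.

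Next I would unwind the two remaining hypotheses on the same event. By the definition of a point-shift, $\P$-a.s.\ $\H(X) \in \m$ for every $X \in \m$. By the definition of $\H$-invariance of $G$, $\P$-a.s.\ $\H(X)$ lies in the same coset as $X$, i.e.\ $\H(X) \in XG$, for every $X \in \m$. Intersecting these two full-probability events with the coset-separation event of the previous paragraph, on the resulting full-probability event one can argue as follows: fix $\omega$ in this event and $X \in \m(\omega)$; then both $X$ and $\H(X)$ are points of $\m(\omega)$ lying in the coset $XG$, but that coset contains at most one point of $\m(\omega)$, so $\H(X) = X$. Since $X \in \m(\omega)$ was arbitrary, $\H(\omega,\cdot)$ is the identity on the support of $\m(\omega)$, which is exactly the statement that $\H$ is the identity point-shift $\P$-a.s.

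There is essentially no real obstacle here; the only point requiring a moment of care is bookkeeping of null sets — one must combine the (at most countably many, in fact finitely many) exceptional $\P$-null sets coming from "$\H(X)\in\m$ for all $X\in\m$", from $\H$-invariance, and from \Cref{distinctcosets}, so that a single generic $\omega$ simultaneously satisfies all three properties before running the coset-counting argument. This also makes rigorous the informal remark preceding the statement, namely that the only way for a point-shift to preserve a subgroup that is negligible from the typical point's perspective is to act trivially.
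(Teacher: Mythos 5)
Your proposal is correct and follows the same route as the paper: invoke \Cref{distinctcosets} to get that the cosets of $G$ separate points of $\m$, then note that $X$ and $\H(X)$ are both points of $\m$ in the coset $XG$, forcing $\H(X)=X$. You merely spell out the null-set bookkeeping that the paper's one-line proof leaves implicit.
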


\begin{proof}
    $G$ being $\H$-invariant means 
    $\H(X)$ and $X$ are in the same coset for $X \in \m$, then
    by \Cref{distinctcosets} $\H$ is the identity point-shift. 
\end{proof}

\begin{corollary} \label{poissonisoisidentity}
    Let $G \in \B(\X)$ a subgroup of $\X$.
    If $\Haar(G) = 0$ and $\m$ is Poisson with intensity $\intensity \in (0,\infty)$,
    then the only $\H$ for which $G$ is $\H$-invariant is the identity.
\end{corollary}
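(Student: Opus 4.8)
The plan is to reduce this to \Cref{isoisidentity}, whose hypothesis is that $\E^\m[\m^!(G)] = 0$. Since $G$ is already assumed measurable and $\H$-invariant, the only thing left to verify is the moment condition $\E^\m[\m^!(G)] = 0$ for a Poisson process $\m$ with intensity $\intensity$ whenever $\Haar(G) = 0$.

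First I would invoke the Slivnyak--Mecke theorem (\Cref{slivnyakmecke}), which says that under $\P^\m$ the reduced process $\m^! = \m - \delta_e$ is again Poisson with intensity measure $\intensity\Haar$; in particular $\E^\m[\m^!(B)] = \intensity\Haar(B)$ for every $B \in \B(\X)$. This is exactly the identity already exploited in \Cref{sec:classification-counterexample}. Applying it with $B := G$ gives
\[
    \E^\m[\m^!(G)] = \intensity\,\Haar(G) = 0,
\]
since $\Haar(G) = 0$ by hypothesis and $\intensity < \infty$.

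Then I would simply quote \Cref{isoisidentity} with this subgroup $G$: it is Borel, it satisfies $\E^\m[\m^!(G)] = 0$, and it is assumed $\H$-invariant, so that corollary forces $\H$ to be the identity point-shift $\P$-a.s.

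There is essentially no obstacle here; the only point deserving a moment's care is that the identity $\E^\m[\m^!(B)] = \intensity\Haar(B)$ supplied by Slivnyak--Mecke is valid for \emph{all} Borel $B$, so it applies to the possibly quite irregular (e.g.\ non-closed, Haar-null) subgroup $G$ without any extra regularity assumption.
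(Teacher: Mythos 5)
Your proposal is correct and matches the paper's own argument exactly: the paper also applies the Slivnyak--Mecke theorem (\Cref{slivnyakmecke}) to obtain $\E^\m[\m^!(G)] = \intensity\Haar(G) = 0$ and then invokes \Cref{isoisidentity}. Your extra remark that the identity holds for arbitrary Borel $B$, with no regularity needed on $G$, is a fine but inessential elaboration.
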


\begin{proof}
    The Slivnyak-Mecke theorem, \Cref{slivnyakmecke} in the appendix,
    implies that $\E^\m[\m^!(G)] = \intensity \Haar(G) = 0$ and
    \Cref{isoisidentity} applies. 
\end{proof}

\section{Connections with Unimodular Networks}\label{sec:connections-unimodular-networks}

This section investigates the relationship between vertex-shifts on
unimodular random networks and point-shifts of $\X$-stationary point processes when $\X$ is unimodular.
See \cite{aldous2007processes} for a general reference on
unimodular networks, and \cite{baccelli2016networks} for a similar investigation
to this one for point processes on $\R^d$.

A graph $\G$ with vertex set $\V$ and undirected edge set $\EE$ is
written $\G = (\V,\EE)$.
Write $\V(\G)$ and $\EE(\G)$ for the vertices and edges of $\G$.
A \textdefn{network} is a graph $\G=(\V,\EE)$ together with a complete
separable metric space $\Xi$ called the \textdefn{mark space} and maps $\xi_\V:\V
\to \Xi$ and $\xi_\EE:\{(\v,\e) \in \V\times \EE:\v\sim \e\}\to \Xi$ 
corresponding to vertex and edge marks, i.e.\ extra information attached
to vertices and edges.
Throughout this document, all networks are assumed to be connected and locally finite,
i.e.\ $\G$ is connected and all vertices of $\G$ have finite degree.
Graphs are special cases of networks that have every mark equal to some
constant.
For $r \geq 0$, the ball (with respect to graph distance) of 
radius $\lceil r \rceil$ with center $\v \in
\V(\G)$ is denoted $N_r(\G,\v)$.

An \textdefn{isomorphism} of networks $\G_1=(\V_1,\EE_1)$ and $\G_2=(\V_2,\EE_2)$
with mark space $\Xi$ (one may always assume a common mark space $\N^\N$, or any
other fixed Polish space) 
is a pair of bijections
$\varphi_\V:\V_1\to\V_2$, and $\varphi_\EE:\EE_1\to\EE_2$ such that
\begin{enumerate}[label=(\roman*)]
    \item $\{\v, \v'\} \in \EE_1$ if and only if $\{\varphi_\V(\v),\varphi_\V(\v')\}
        \in \EE_2$,
    \item $\varphi_\EE(\{\v, \v'\}) =
        \{\varphi_\V(\v),\varphi_\V(\v')\}$ for all $\{\v,\v'\} \in \EE_1$,
    \item the vertex mark maps $\xi_{\V_1},\xi_{\V_2}$ satisfy
        $\xi_{\V_1} = \xi_{\V_2}\circ\varphi_\V$, and
    \item the edge mark maps $\xi_{\EE_1},\xi_{\EE_2}$ satisfy
        $\xi_{\EE_1}= \xi_{\V_2}\circ\varphi_\EE$.
\end{enumerate}

A \textdefn{rooted network} is a pair $(\G,\o)$ in which $\G$ is a network and $\o$
is a distinguished vertex of $\G$ called the \textdefn{root}.
An \textdefn{isomorphism} of rooted networks $(\G,\o)$ and $(\G',\o')$
is a network isomorphism such that $\varphi_\V(\o) = \o'$.
Let $\calG$ be the set of isomorphism classes of networks, 
and let $\calGs$ be the set of isomorphism classes of rooted networks.
Similarly define $\calGss$ for networks with a pair of distinguished vertices.
The isomorphism class of a network $\G$ (resp.\ $(\G,\o)$ or $(\G,\o,\v)$) is
denoted $[\G]$ (resp.\ $[\G,\o]$ or $[\G,\o,\v]$).

Equip $\calGs$ (and similarly for $\calGss$) with a metric and its Borel
$\sigma$-algebra.
The distance between $[\G,\o]$ and $[\G',\o']$ is $2^{-\alpha}$, where $\alpha$
is the supremum of those $r>0$ such that there is a rooted isomorphism between
$N_r(\G,\o)$ and $N_r(\G',\o')$ such that the distance of the marks of the
corresponding elements is at most $\frac{1}{r}$.
Then $\calGs$ is a complete separable metric space, and measurable functions on
$\calGs$ are those that can be identified by looking at finite neighborhoods of
the root.

A \textdefn{random network} is a random element in $\calGs$.
That is, it is a measurable map from $(\Omega,\A,\P)$ to
$\calGs$.
This map will be denoted $[\GG, \oo]$.
A random network $[\GG,\oo]$ is \textdefn{unimodular} if for all measurable
$g:\calGss \to \R_+$, the following mass transport principle is satisfied,
\[
    \E \sum_{\v \in \V(\GG)} g[\GG,\oo,\v] 
    = \E\sum_{\v \in \V(\GG)}g[\GG,\v,\oo].
\]

A \textdefn{vertex-shift}, which is the analog of a point-shift, is a map $f$ that associates to each network $\G$ a
function $f_\G:\V(\G) \to \V(\G)$ such that
\begin{enumerate}[label=(\roman*)]
    \item for all isomorphic networks $\G'$ with vertex isomorphism
        $\varphi_\V:\V(\G) \to \V(\G')$, one has 
        $f_{\G'} \circ \varphi_\V = \varphi_\V \circ f_\G$, and
    \item $[\G,\o,\v] \mapsto 1_{f_\G(\o)=\v}$ is measurable on
        $\calGss$.
\end{enumerate}

A general situation is given under which a point-process, seen under its Palm probability measure and rooted at the identity,
is a unimodular network.
First, the appropriate notion of flow-adaptedness for networks must be given,
then the result follows.

\begin{definition}
    Suppose $\beta$ is a map on $\Omega$ such that for all $\omega \in \Omega$, $\beta(\omega)$
    is a network whose vertex set $V(\beta) \subseteq \X$.
    Then $\beta$ is called \textdefn{flow-adapted} if
    for all $z \in \X$, $\beta(\theta_z\omega)$ is the \textdefn{shift} $T_{z}\beta(\omega)$ of $\beta(\omega)$ by $z$, i.e.\
    $\V(\beta(\theta_z\omega)) = \{zX: X \in \V(\beta(\omega))\}$, and
    $E(\beta(\theta_z\omega)) = \{ \{zX, zY\}: \{X,Y\} \in E(\beta(\omega))\}$,
    and all marks are preserved.
\end{definition}

\begin{theorem}\label{ppisunimodularnetwork}
    Suppose $\X$ is unimodular.
    Let $\m$ be a flow-adapted simple point process with intensity
    $\intensity\in(0,\infty)$.
    Let $\beta$ be a map on $\Omega$ such that for all $\omega\in \Omega$, $\beta(\omega)$ is a network,
    and such that $\beta$ satisfies
    \begin{enumerate}[label=(\roman*)]
        \item $\V(\beta) = \m$,
        \item $\beta$ is flow-adapted,
        \item $\omega \mapsto [\beta(\omega),e]$ is measurable on the
            event $\{e \in \V(\beta)\}$.
    \end{enumerate}
    Then $[\beta,e]$ is a unimodular network under $\P^\m$ on the event
    $\{ e \in \V(\beta)\} = \{e \in \m\}$.
\end{theorem}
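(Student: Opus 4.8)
The plan is to verify the unimodularity mass transport principle for $[\beta, e]$ under $\P^\m$ by reducing it to the mass transport theorem for the point process $\m$ (more precisely, to a diagonally invariant transport function, using the relationship between $\E^\m$ and $\E$). Let $g:\calGss\to\R_+$ be measurable. I need to show
\[
    \E^\m\!\left[1_{e\in\m}\sum_{\v\in\V(\GG)}g[\beta,e,\v]\right]
    = \E^\m\!\left[1_{e\in\m}\sum_{\v\in\V(\GG)}g[\beta,\v,e]\right],
\]
where $\GG := \beta$. First I would use flow-adaptedness of $\beta$ (assumption (ii)) to define, for $x,y\in\X$ and $\omega\in\Omega$,
\[
    \tau(\omega,x,y) := 1_{x,y\in\m(\omega)}\, g[\beta(\omega),x,y].
\]
Because $\beta(\theta_z\omega) = T_z\beta(\omega)$ with all marks preserved, the rooted-at-$(zx,zy)$ network $[\beta(\theta_z\omega),zx,zy]$ is isomorphic to $[\beta(\omega),x,y]$ (the isomorphism being left-translation by $z$), so $g[\beta(\theta_z\omega),zx,zy] = g[\beta(\omega),x,y]$; together with flow-adaptedness of $\m$ this shows $\tau$ is diagonally invariant in the sense of~\eqref{transportfunctioneq}. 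A subtle point here is measurability of $\tau$: this follows from assumption (iii) combined with flow-adaptedness, since $[\beta,x,y]$ on $\{x,y\in\m\}$ can be obtained by applying the flow and using that $\omega\mapsto[\beta(\omega),e,\v]$-type maps are measurable once rooted (one expresses $[\beta,x,y]$ via $[\beta(\theta_x^{-1}\omega),e,x^{-1}y]$ and uses measurability of $\theta$ and $g$).

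Next I would apply the mass transport theorem~\eqref{mtp0}. Since $\X$ is unimodular, $\Delta\equiv 1$, so
\[
    \E^\m\int_\X \tau(e,y)\,\m(dy) = \E^\m\int_\X \tau(x,e)\,\m(dx).
\]
The left side is $\E^\m\big[1_{e\in\m}\sum_{y\in\m}g[\beta,e,y]\big]$ and the right side is $\E^\m\big[1_{e\in\m}\sum_{x\in\m}g[\beta,x,e]\big]$, because under $\P^\m$ integrating against $\m(dy)$ picks out exactly the points $y\in\m$, and on $\{e\in\m\}$ we have $\V(\GG) = \m$ by assumption (i). This is precisely the desired mass transport identity for $[\beta,e]$.

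The remaining point is that $[\beta,e]$ is actually a \emph{random element of} $\calGs$ under $\P^\m$ restricted to $\{e\in\V(\beta)\} = \{e\in\m\}$ — i.e.\ it is measurable, connected, and locally finite — so that calling it a "unimodular network" is meaningful. Measurability on $\{e\in\m\}$ is exactly assumption (iii). Connectedness and local finiteness are part of the standing assumption that all networks $\beta(\omega)$ are networks (hence connected and locally finite) in the paper's convention. I expect the main obstacle to be the bookkeeping around measurability of the doubly-rooted map $[\beta,x,y]$ and the careful justification that $g[\beta,\cdot,\cdot]$ descends to a well-defined diagonally invariant function; once that is in hand, the mass transport theorem does all the work and the rest is unwinding notation. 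I would also remark that unimodularity of $\X$ is used exactly once, to kill the modular factor $\Delta(x^{-1})$ in~\eqref{mtp0}, mirroring the role it plays throughout the paper.
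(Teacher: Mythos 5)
Your proposal is correct and follows essentially the same route as the paper: both apply the mass transport theorem~\eqref{mtp0} to the diagonally invariant function $\tau(\omega,x,y)=1_{x,y\in\m(\omega)}\,g[\beta(\omega),x,y]$, using flow-adaptedness of $\beta$ (translation by $z$ gives a doubly-rooted isomorphism) to verify invariance and unimodularity of $\X$ to drop the factor $\Delta(x^{-1})$, with assumption (iii) handling measurability of the rooted network. Nothing essential is missing.
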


\begin{proof}
    The assumption (iii) implies $[\beta,e]$ is a random network under $\P^\m$ on $\{e \in \V(\beta)\}$, so one only needs to check unimodularity.
    Let $g:\calGss\to \R_+$ be given.
    Then
    \begin{align*}
        \E^\m\sum_{\v \in \V(\beta)} g[\beta,e,\v]
        &=\E^\m \int_\X g[\beta,e, x]\,\m(dx)\\
        &=\E^\m \int_\X g[\beta(\theta_y^{-1}),e, y^{-1}]\,\m(dy) 
        & \text{(mass transport)}\\
        &=\E^\m \int_\X g[T_{y^{-1}}\beta,e, y^{-1}]\,\m(dy)\\
        &=\E^\m \int_\X g[\beta,y, e]\,\m(dy)\\
        &=\E^\m\sum_{\v \in \V(\beta)} g[\beta,\v,e],
    \end{align*}
    showing unimodularity.
\end{proof}

Recall that the symbol $T_z$ for $z \in \X$, used in the previous result as the shift operator on networks,
is also used as the shift operator on counting measures and point processes, which is how it is used in the following.

\begin{definition}\label{Xembeddingdefn}
    Let $[\GG,\oo]$ be a unimodular network.
    An \textdefn{$\X$-embedding} of $[\GG,\oo]$ with respect to a probability measure $\mathcal{P}$ on $\Omega$ 
    is a map
    $\embedding:\calGs\to\MM$ with the following properties:
    \begin{enumerate}[label=(\roman*)]
        \item $\embedding$ is measurable,
        \item $e \in \embedding[\G,\o]$ for all rooted networks $(\G,\o)$,
        \item there is a measurable function $\t:\calGss \to \X$ such
            that for every rooted network $(\G,\o)$ and every vertex $\v \in
            \V(\G)$,
            \begin{align*}
                \embedding[\G,\v] &=
                T_{\t[\G,\o,\v]^{-1}}\embedding[\G,\o],\\
                \t[\G,\v,\o] &= \t[\G,\o,\v]^{-1},
            \end{align*}
        \item $\mathcal{P}$-almost surely, the map defined on $\V(\GG)$ by $\v \mapsto
            \t[\GG,\oo,\v]$ is a bijection between $\V(\GG)$ and the support of 
            $\embedding[\GG,\oo]$.
    \end{enumerate}
    Say that the Palm version of a point process $\m$ is an \textdefn{$\X$-embedding} of
    $[\GG,\oo]$ if there is an $\X$-embedding $\embedding$ with respect to $\P^\m$ such that
    $\P^\m$-a.s.\ $\m=\embedding[\GG,\oo]$.
\end{definition}

Two of the motivating open questions of this research are:
\begin{enumerate}
    \item{} For a fixed $\m$, is the Palm version of $\m$ an $\X$-embedding of some $[\GG,\oo]$?
    \item{} For a fixed $[\GG,\oo]$, is there an $\m$ such that the Palm version of $\m$ is an $\X$-embedding of $[\GG,\oo]$?
\end{enumerate}

When $\X$ is unimodular, the answer to the first question is ``yes'' for $\m$ if it is possible to draw a connected graph on $\m$
in a flow-adapted way.

\begin{definition}
    Call a flow-adapted simple point process $\m$ \textdefn{connectible in a flow-adapted way}
    if there exists a connected 
    flow-adapted locally finite graph $\GG=\GG(\omega)$ with $\V(\GG)= \m = \sum_i
    \delta_{Y_i}$ almost surely and such that
    $1_{\{Y_i,Y_j\} \in \EE(\GG)}$ is measurable for all $i,j$.
\end{definition}

\begin{theorem}\label{locallyfinitegraphsufficient}
    Suppose $\X$ is unimodular and that $\m$ is a flow-adapted simple point process with intensity $\intensity \in (0,\infty)$
    that is connectible in a flow-adapted way.
    Then the Palm version of $\m$ is an $\X$-embedding of some
    unimodular network.
\end{theorem}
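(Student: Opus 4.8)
The plan is to turn the given connecting graph into a unimodular network by marking each edge with the group displacement of its endpoints, to apply \Cref{ppisunimodularnetwork} to the resulting rooted network, and then to build the embedding $\embedding$ by walking along paths from the root and multiplying the displacement marks.

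First I would fix a connected flow-adapted locally finite graph $\GG=\GG(\omega)$ with $\V(\GG)=\m$ as provided by the hypothesis, and upgrade it to a network $\beta$ over the mark space $\Xi:=\X$ (a Polish space) by assigning to each incident vertex--edge pair $(X,\{X,Y\})$ with $X,Y\in\m$ the edge mark $\xi_\EE(X,\{X,Y\}):=X^{-1}Y\in\X$, leaving vertex marks constant. Since the flow acts on $\X$ by left translation, $(zX)^{-1}(zY)=X^{-1}Y$, so these marks are shift-invariant; combined with flow-adaptedness of $\GG$ this makes $\beta$ flow-adapted with $\V(\beta)=\m$, and measurability of $\GG$ gives measurability of $\omega\mapsto[\beta,e]$ on $\{e\in\m\}$. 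Hence \Cref{ppisunimodularnetwork} applies and $\omega\mapsto[\beta(\omega),e]$ is a unimodular network under $\P^\m$ (recall $e\in\m$, $\P^\m$-a.s.).

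Next I would construct $\embedding:\calGs\to\MM$ and the auxiliary $\t:\calGss\to\X$ required by \Cref{Xembeddingdefn}. Let $\calG_0\subseteq\calGs$ be the set of rooted networks whose edge marks all lie in $\X$, satisfy the involution $\xi_\EE(\v',\{\v,\v'\})=\xi_\EE(\v,\{\v,\v'\})^{-1}$, have product equal to $e$ around every cycle, and for which the family of path-walk positions is locally finite in $\X$; each condition is a countable requirement on finite neighborhoods of the root (for an exhaustion of $\X$ by compacts in the last case) and is insensitive to the choice of root, so $\calG_0$ is Borel. On $\calG_0$ set $\t[\G,\o,\v]$ equal to the product of edge marks along any path from $\o$ to $\v$ (well-defined by the cycle and involution conditions, measurable since read off a finite neighborhood) and $\embedding[\G,\o]:=\sum_{\v\in\V(\G)}\delta_{\t[\G,\o,\v]}$, and off $\calG_0$ set $\embedding[\G,\o]:=\delta_e$, $\t\equiv e$. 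Properties (i) and (ii) of \Cref{Xembeddingdefn} are then immediate, and (iii) reduces to the cocycle identities $\t[\G,\v,\o]=\t[\G,\o,\v]^{-1}$ and $\t[\G,\o,\v]^{-1}\t[\G,\o,\v']=\t[\G,\v,\v']$, which hold on $\calG_0$ and trivially off it, and which yield $\embedding[\G,\v]=T_{\t[\G,\o,\v]^{-1}}\embedding[\G,\o]$ by linearity of $T$.

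Finally I would check that $[\beta,e]\in\calG_0$ holds $\P^\m$-a.s.: the marks are honest displacements, so around a cycle $X_0\sim X_1\sim\cdots\sim X_k=X_0$ the product $X_0^{-1}X_1\,X_1^{-1}X_2\cdots X_{k-1}^{-1}X_0$ telescopes to $e$, and the path-walk positions are exactly the points of $\m$, hence locally finite. Rooting at $e\in\m$, the path-walk from the root to the vertex sitting at $X\in\m$ produces $\t[\beta,e,\cdot]=e^{-1}X=X$, so $\embedding[\beta,e]=\m$ and $\v\mapsto\t[\beta,e,\v]$ is a bijection from the abstract vertex set onto the support of $\m$, which is property (iv) for $\mathcal P=\P^\m$; thus the Palm version of $\m$ is an $\X$-embedding of $[\beta,e]$. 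The hard part will be the measurability bookkeeping above: $\embedding$ and $\t$ must be genuine Borel maps on the abstract spaces $\calGs,\calGss$, which forces a sensible definition on \emph{all} rooted networks (hence the Borel set $\calG_0$ and the default values) and a uniform verification of the covariance relation (iii), all resting on the path-independence engineered into $\calG_0$.
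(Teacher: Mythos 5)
Your proposal is correct and follows essentially the same route as the paper: mark each half-edge of the connecting graph with the displacement $X^{-1}Y$, invoke \Cref{ppisunimodularnetwork} to get a unimodular network under $\P^\m$, and define $\t$ and $\embedding$ by multiplying marks along paths, recovering $\m$ by telescoping from the root. The only difference is bookkeeping: where the paper regularizes the global definition of $\embedding$ by proving the marked network has no nontrivial rooted automorphisms, you restrict to an explicit Borel set $\calG_0$ of consistent networks with default values outside, and injectivity in property (iv) of \Cref{Xembeddingdefn} then follows directly from $\t[\beta,e,\v]=X$ — a technical variation, not a different argument.
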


\begin{proof}
    Choose a random graph $\GG$ witnessing the fact that $\m$ is connectible.
    Consider some edge $\e \in \EE(\GG)$ from $X \in \m$ to $Y \in \m$.
    Without loss of generality, assume the mark space $\Xi = \X$.
    Let the mark of $(X,\e)$ be $X^{-1}Y$,
    and let $\beta(\omega)$ be the network with underlying graph $\GG(\omega)$
    and marks as just specified.
    Choose $\oo := e$ and let $[\GG,\oo]:= [\beta,e]$.
    Let $\psi$ a rooted automorphism of $(\GG,\v)$ be given, where $\v$ is any vertex of $\GG$.
    It will be shown that $\psi$ is the identity.
    Assume that $\psi$ fixes
    all vertices less than graph distance $k$ from $\v$.
    For each $Y\in \m$ of distance $k+1$ from $\v$,
    there is an $X \in \m$ of distance $k$ from $\v$ and an edge $\e$ from $X$ to
    $Y$. The mark of $(X,\e)$ is $X^{-1}Y$ and this must equal the mark of
    $\psi(X)=X$ between
    $X$ and $\psi(Y)$.
    But this mark is $X^{-1}\psi(Y)$.
    Thus $X^{-1}Y = X^{-1}\psi(Y)$ so that $Y=\psi(Y)$.
    By induction and using that $\GG$ is connected, one finds that $\psi$ is the
    identity automorphism.
    By construction $\V(\beta)=\m$, $\beta$ is flow-adapted,
    and $\omega \mapsto [\beta(\omega),e]$ is
    measurable on the set $\{e \in \m\}$.
    By \Cref{ppisunimodularnetwork}, $[\beta,e]$ is a unimodular
    network under $\P^\m$ on the set $\{e \in \m\}$.
    It will be shown that $\m$ is an $\X$-embedding of $[\GG,\oo]$.

    On the set $\{e \in \m\}$ one has that $\m$ can be reconstructed from $[\GG,\oo]$.
    The reconstruction procedure will be used to define an $\X$-embedding $\embedding$.
    Indeed, let $\t[\G,\o,\v] := \prod_{i=0}^{k-1} \xi_{\EE}(\v_i,\e_i)$
    where $\v_0\v_1\cdots\v_k$ is a path between the two arbitrary vertices
    $\o$ and $\v$ of $\G$, and $\e_i$ is
    the edge $\{\v_i,\v_{i+1}\}$, assuming the product is independent of the path
    chosen between.
    Let $\t[\G,\o,\v]:= e$ otherwise.
    The fact that  $\prod_{i=0}^{k-1} \xi_{\EE}(\v_i,\e_i)$ is required to be independent of path
    implies that for any $\v_1,\v_2,\v_3 \in \V(\G)$ one has
    $\t[\G,\v_1,\v_3] = \t[\G,\v_1,\v_2] \t[\G,\v_2,\v_3]$ and in particular
    that $\t[\G,\v,\o] = \t[\G,\o,\v]^{-1}$ for any $\o,\v \in \V(\G)$.

    For $[\G,\o] \in \calGs$ such that $(\G,\v)$ has no rooted automorphisms for any $\v \in \V(\G)$, define 
    \[
        \embedding[\G,\o] := \left\{ \t[\G,\o,\v] : \v \in \V(\G)  \right\},
    \]
    otherwise define $\embedding[\G,\o] := \{ e \}$.
    Also for $[\G,\o] \in \calGs$ such that $(\G,\v)$ has no rooted automorphisms for any $\v \in \V(\G)$,
    one has for each $\v \in \V(\G)$ that
    \begin{align*}
        \embedding[\G,\v]  
        &=  \left\{ \t[\G,\v,\v'] : \v' \in \V(\G)  \right\}\\
        &=  \left\{ \t[\G,\o,\v]^{-1} \t[\G,\o,\v] \t[\G,\v,\v'] : \v' \in \V(\G)  \right\}\\
        &=  \left\{ \t[\G,\o,\v]^{-1} \t[\G,\o,\v'] : \v' \in \V(\G)  \right\}\\
        &= T_{\t[\G,\o,\v]^{-1}} \embedding[\G,\o].
    \end{align*}
    If $[\G,\o]$ is such that some $v \in \V(\G)$ is such that $(\G,v)$ has a rooted automorphism,
    then then same is true of $[\G,\v]$, so that $\embedding[\G,\v] = \{e\} = T_{\t[\G,\o,\v]^{-1}}\embedding[\G,\o]$
    for the only vertex $\v=\o \in \V(\G)$.

    One may then recover $\m$ from $[\GG,\oo]$ on the set $\{e \in \m\}$ in the following way.
    Consider a path $\v_0\v_1\cdots \v_k$ in $[\GG,\oo]$ starting and ending at the root.
    Since there are no rooted automorphisms of $(\GG,\oo)$ on the set $\{e \in
    \m\}$ one may
    uniquely choose $X_0:=e,X_1,\ldots,X_{k-1},X_{k}:=e \in \X$ such that 
    $\xi_{\EE}(\v_i,\e_i) =  X_i^{-1}X_{i+1}$ for each $i$.
    Then $\prod_{i=0}^{k-1}\xi_{\EE}(\v_i,\e_i) = X_0^{-1} X_k=e$ regardless
    of the choice of $\v_0\v_1\cdots\v_k$ so long as the path starts and ends at
    the root.
    It follows that for an arbitrary path $\v_0\v_1\cdots \v_k$ one has that
    $\prod_{i=0}^{k-1}\xi_\EE(\v_i,\e_i)$ depends only on the
    endpoints  $\v_0$ and $\v_k$.
    Thus, for any $X \in \m = \V(\GG)$, consider a path starting at the root $\oo \in \GG$
    and ending at $\v:=X$.
    Then $\t[\GG,\oo,\v] = e^{-1} X = X$.
    Thus $\m = \{\t[\GG,\oo,\v] : \v \in V(\GG)\} = \embedding[\GG,\oo]$ almost surely on the set $\{e \in \m\}$.
    Hence $\embedding$ is a witness to the fact that
    $\m$ is an $\X$-embedding of $[\GG,\oo]= [\beta, e]$ under $\P^\m$.
\end{proof}

The problem of finding which point processes $\m$ have Palm versions that are $\X$-embeddings of some unimodular
network now reduces to finding which $\m$ admit a connected flow-adapted locally finite graph on
$\m$.
It is conjectured that the requirement that $\m$ be connectible in a flow-adapted way is automatic.

\begin{conjecture}
    Suppose $\X$ is unimodular. Then all flow-adapted simple point processes $\m$ on $\X$
    are connectible in a flow-adapted way.
\end{conjecture}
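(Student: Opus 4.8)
Since this statement is offered as a conjecture, what follows is a proposed line of attack rather than a complete proof. The goal is, given an arbitrary flow-adapted simple point process $\m$ on a unimodular $\X$, to produce a connected, flow-adapted, locally finite graph $\GG$ with $\V(\GG)=\m$. The naive attempt fails: fixing a symmetric relatively compact neighborhood $U$ of $e$ and joining $X\sim Y$ whenever $X^{-1}Y\in U$ gives a flow-adapted locally finite graph (locally finite because $\m$ is locally finite and $XU$ is relatively compact), but it is essentially never connected, and enlarging $U$ does not converge to a fixed locally finite graph. So a genuinely multi-scale construction is needed, and the plan is to build one by hierarchical clustering.

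Fix a proper left-invariant metric $d$ on $\X$; such a metric exists on any LCSH group by Struble's theorem, and its left-invariance is what makes metric constructions flow-adapted. Build an increasing sequence of flow-adapted graphs $\GG_0\subseteq\GG_1\subseteq\cdots$ on $\m$ together with their partitions into connected components $\mathcal{C}_n$, starting from the edgeless graph $\GG_0$ with $\mathcal{C}_0$ the partition into singletons. Given $\mathcal{C}_n$, for each cluster $C\in\mathcal{C}_n$ with $C\neq\m$ let $(X_C,Y_C)$ realize $\min\{d(X,Y):X\in C,\ Y\in\m\setminus C\}$, resolving the null sets of ties with the separating-points results of \Cref{sec:separating-points}, and let $\GG_{n+1}$ be $\GG_n$ with the edges $\{X_C,Y_C\}$ added. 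Each step preserves flow-adaptedness because the rule refers only to $\m$ and $d$; finiteness of every cluster at every finite stage should follow from a mass-transport count showing that $\E^\m$ of the number of clusters absorbed into the one containing $e$ stays finite at each stage, with \Cref{noflowadaptedselection} preventing flow-adapted finite pieces from coalescing into infinite ones prematurely. Then $\GG:=\bigcup_n\GG_n$ is a flow-adapted graph on $\m$, and it is connected provided $\bigcup_n\mathcal{C}_n(e)=\m$, $\P^\m$-a.s., where $\mathcal{C}_n(e)$ is the cluster through $e$.

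Two obstacles remain, and I expect the first to be the crux. (i) \emph{Exhaustion}: one must show the growing clusters eventually absorb every point of $\m$, equivalently that a flow-adapted simple point process on a unimodular group cannot be split in a flow-adapted way into two infinite pieces $\m=A\sqcup B$ across which the greedy reach-out never crosses; this is plausible but I know of no general argument, and it is exactly the place where the Euclidean proof invokes the translation-invariant order on $\R^d$. (ii) \emph{Local finiteness of the limit}: a fixed point $X$ could a priori be an endpoint of a reach-out edge at infinitely many scales, giving $X$ infinite degree in $\GG$, so the merging rule must be refined so that each cluster reaches out only through a ``frozen'' canonical vertex --- which reintroduces the problem of selecting a vertex from a finite cluster in a flow-adapted way, solvable with the separating-points machinery or with auxiliary randomization --- or one must instead prove a direct mass-transport bound $\E^\m[\deg_\GG(e)]<\infty$. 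Settling (i) while enforcing (ii) would prove the conjecture; absent a general argument it is likely easier to first treat amenable unimodular $\X$, where F\o lner averaging gives extra leverage on the exhaustion step.
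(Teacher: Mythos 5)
You should first note that the paper itself does not prove this statement: it is posed as an open conjecture, and the only supporting evidence given is the subsequent theorem establishing the special cases where $\X$ is compact, where $\X=\R^d$ (via the Delaunay graph), or where some point-shift $\H$ already has $G^\H$ connected. So there is no paper proof to compare against, and your proposal must be judged as a programme. As a programme it is reasonable in spirit (a multi-scale, metric-based merging scheme using a proper left-invariant metric is a natural candidate), and you are candid that the exhaustion step (i) is unresolved; that honesty is appropriate, since (i) is indeed where the whole difficulty of the conjecture sits and nothing in the paper's toolkit (mass transport, \Cref{noflowadaptedselection}, Palm equivalences) obviously forces two flow-adapted infinite pieces to be joined.

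Beyond the gap you flag, there are two concrete problems earlier in your construction. First, finiteness of clusters at each finite stage does not follow from the argument you gesture at: in a single merging round the reach-out edges can chain, $C_1\to C_2\to C_3\to\cdots$, so an infinite cluster can form already at stage one; \Cref{noflowadaptedselection} rules out flow-adapted selection of finite subsets from infinite ones, but it says nothing about preventing infinitely many finite clusters from coalescing simultaneously (this is the descending-chains issue familiar from nearest-neighbour graphs, and for a general, non-Poisson, flow-adapted $\m$ infinite descending chains cannot be excluded). Second, the tie-breaking you delegate to \Cref{sec:separating-points} is not available in general: the hypotheses there require Haar-null coincidence sets, and the paper itself points out that this fails already for countable discrete groups with the discrete metric --- which are unimodular and covered by the conjecture --- so distances under any proper left-invariant metric on such a group will have ties with positive probability and your $(X_C,Y_C)$ is not well defined in a flow-adapted way there. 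Together with (i) and the local-finiteness issue (ii) you mention, the proposal is a plausible sketch of an attack but does not constitute a proof, and its key steps would fail as stated on natural examples.
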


Finally, some special cases of the conjecture are known to hold.

\begin{theorem}
    Let $\m$ be a flow-adapted simple point process of intensity $\intensity \in
    (0,\infty)$.
    Then $\m$ is connectible in a flow-adapted way in any of the following situations:
    \begin{enumerate}[label=(\alph*)]
        \item $\X$ is compact,
        \item $\X=\R^d$,
        \item there exists a point-shift $\H$ such that $G^\H$ is
            connected.
    \end{enumerate}
\end{theorem}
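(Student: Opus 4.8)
The plan is to exhibit, in each of the three situations, an explicit connected, flow-adapted, locally finite graph $\GG$ on $\m$; in every case the construction is canonical, so translation-equivariance (flow-adaptedness) and measurability of the edge indicators are immediate, and the only real work is in checking connectedness and local finiteness. For (c) I would simply take $\GG := G^\H$ with its directed edges viewed as undirected, so that $\GG$ has an edge $\{X,\H(X)\}$ for each $X\in\m$: it is connected by hypothesis and flow-adapted, as already recorded for $G^\H$, and it is locally finite because each $X$ has exactly one out-edge and, by \Cref{finitelymanychildren} (which uses that $\X$ is unimodular, the standing assumption of this section), only finitely many children $\P$-a.s. For (a), a compact group has finite Haar measure, so $\E[\m(\X)] = \intensity\,\Haar(\X) < \infty$ and $\m$ is $\P$-a.s.\ finite; then $\GG :=$ the complete graph on $\m$ (join every pair of distinct points of $\m$) is trivially connected and locally finite, and it is flow-adapted since the complete graph on $z\,\m$ is precisely the shift by $z$ of the complete graph on $\m$.

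The interesting case is (b), where $\m$ may be infinite. Here I would take $\GG$ to be the Gabriel graph of $\m$: join distinct $X,Y\in\m$ by an edge exactly when the open Euclidean ball having $[X,Y]$ as a diameter contains no point of $\m$. This is translation-equivariant, hence flow-adapted, and the edge indicators are clearly measurable. Connectedness is elementary: if distinct $X,Y\in\m$ are not joined, choose $Z\in\m$ in the open diameter-ball of $[X,Y]$; then $\|X-Z\|,\|Z-Y\| < \|X-Y\|$, and since $\m$ is locally finite this recursion terminates, producing a finite path in $\GG$ from $X$ to $Y$.

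The main obstacle is verifying that the Gabriel graph is locally finite, and I would argue this as a purely deterministic property of locally finite subsets of $\R^d$. If some $X\in\m$ had infinitely many Gabriel neighbours $Y_1,Y_2,\dots$, then necessarily $\|X-Y_n\|\to\infty$; passing to a subsequence along which the unit vectors $u_n := (Y_n-X)/\|X-Y_n\|$ converge to some $u^{*}$, fix an index $m$ for which $u_m$ is already close to $u^{*}$ and then let $n$ range over much larger indices. Testing the Gabriel condition for the edge $\{X,Y_n\}$ against the point $Y_m$ (i.e.\ $Y_m$ lies outside the open diameter-ball of $[X,Y_n]$) gives $\langle u_m,u_n\rangle \le \|X-Y_m\|/\|X-Y_n\|$, whose right side tends to $0$ as $n\to\infty$ while the left side stays near $1$ ---  a contradiction. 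With local finiteness established, the Gabriel graph is the desired connected flow-adapted locally finite graph, so $\m$ is connectible in a flow-adapted way in case (b) as well, completing the proof. Cases (a) and (c) are essentially immediate once the graph is written down; the deterministic local-finiteness argument for the Gabriel graph is the one step meriting care.
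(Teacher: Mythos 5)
Your handling of (a) and (c) matches the paper's proof (the complete graph on an almost surely finite configuration; the undirected version of $G^\H$), and your deterministic argument that the Gabriel graph of a locally finite subset of $\R^d$ is locally finite is correct; your remark that local finiteness of $G^\H$ in case (c) rests on \Cref{finitelymanychildren}, hence on unimodularity of $\X$ (the standing assumption of the section), is if anything more careful than the paper, which leaves this implicit. The genuine gap is the connectedness claim for the Gabriel graph in case (b). The recursion ``replace an unjoined pair $\{X,Y\}$ by $\{X,Z\}$ and $\{Z,Y\}$, where $Z$ blocks the edge'' terminates when only finitely many points can ever be involved, because then only finitely many pairwise distances occur and the strictly decreasing pair lengths cannot decrease forever; this is why the argument is fine for finite configurations. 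But for an infinite locally finite $\m$ nothing confines the recursion to a bounded region: the ball with diameter $[X,Z]$ is \emph{not} contained in the ball with diameter $[X,Y]$, so successive blocking points may lie farther and farther from the original pair. All one knows is that the pair lengths strictly decrease; they may decrease to a strictly positive limit while the pairs drift off to infinity, and local finiteness of $\m$ then yields no contradiction, since the infinitely many points involved are spread out rather than accumulating. Equivalently, the usual induction on the rank of $\|X-Y\|$ among pairwise distances breaks down because that set of distances need not be finite or well ordered below $\|X-Y\|$. So ``since $\m$ is locally finite this recursion terminates'' is unjustified, and this is precisely the delicate point: for infinite configurations, connectedness of Gabriel-type graphs is easy only when, say, the infimum of distances between two putative components is attained, and nothing guarantees that.

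This is exactly the step the paper does not attempt elementarily: its proof of (b) invokes Theorem 5.4 of \cite{baccelli2016networks}, which provides a connected, locally finite, flow-adapted graph (the Delaunay graph) on any stationary point process in $\R^d$. To repair your version you would need either to prove that the blocking recursion can be carried out inside a fixed bounded region (or otherwise establish connectedness of the Gabriel graph of $\m$ almost surely), or simply to substitute the Delaunay graph together with the cited result, as the paper does. With that substitution, the rest of your argument (flow-adaptedness, measurability of the edge indicators, and cases (a) and (c)) stands as written.
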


\begin{proof}
    If $\X$ is compact then $\P$-a.s.\ $\m(\X)<\infty$ and the complete graph on
    $\m$ suffices.
    If $\X=\R^d$, then Theorem 5.4 in \cite{baccelli2016networks} shows that the
    Delaunay graph of $\m$ suffices.
    If there exists a point-shift $\H$ such that $G^\H$ is connected, then the
    graph $G^\H$ suffices.
\end{proof}

\newpage
\section*{Appendix: Palm Calculus}

In this appendix, fix a flow-adapted point process
$\m$ of intensity $\intensity \in (0,\infty)$.
The necessity of this appendix is mostly to prove \Cref{palmpalmostsureequivalence} and show how it may be used
to translate definitions under $\P$ and $\P^\m$, a technique that is used extensively is this research.

The connection between $\P$ and $\P^\m$ is given by the refined Campbell theorem, abbreviated to C-L-M-M
for Campbell, Little, Mecke, and Matthes.

\begin{theorem}[C-L-M-M]\label{CLMM} \cite{last2008modern}
    For all $f :\Omega \times \X \to \R_+$ measurable,
    \[
        \E\int_\X f(\theta_x^{-1},x)\,\m(dx) = \intensity \E^{\m} \int_\X f(\theta_e,x)\,\Haar(dx).
    \]
\end{theorem}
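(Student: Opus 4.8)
Although this is classical~\cite{last2008modern}, here is how I would prove it directly from the definition of $\P^\m$ used above. The plan is to reduce the identity to product functions $f = 1_A \otimes 1_B$, and then to recognize both sides as the integral of $1_B$ against a measure on $\X$ which must be a constant multiple of Haar measure by uniqueness of Haar.

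First I would extract from the definition of $\P^\m$ its functional form: applying the standard monotone-class machine to the defining formula for $\P^\m(A)$ gives, for every measurable $h : \Omega \to \R_+$,
\[
    \intensity\,\E^\m[h] \;=\; \E\int_\X h(\theta_x^{-1})\,w(x)\,\m(dx).
\]
This is the only consequence of the definition that I will need. Next, fix $A \in \A$ and define the set function $\nu$ on $\B(\X)$ by
\[
    \nu(B) \;:=\; \E\int_\X 1_B(x)\,1_A(\theta_x^{-1})\,\m(dx).
\]
It is a Borel measure on $\X$, and by the ordinary Campbell formula it is dominated by $\E[\m(\cdot)] = \intensity\,\Haar$, hence locally finite, hence Radon on the LCSH space $\X$. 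The crucial step is that $\nu$ is left-invariant. To see this, fix $z \in \X$ and write $\nu(zB) = \E\int_\X 1_B(z^{-1}x)\,1_A(\theta_x^{-1})\,\m(dx)$; using the $\theta$-invariance of $\P$ to replace $\omega$ by $\theta_z\omega$, then the covariance~\eqref{eqtn:covariance} of $\m$ (which says $\int g\,d\m(\theta_z\omega,\cdot) = \int g(z\,\cdot)\,d\m(\omega,\cdot)$) to substitute $x \mapsto zx$, and finally the flow identity $\theta_{(zx)^{-1}}\theta_z = \theta_{x^{-1}}$, one collapses the shift and obtains $\nu(zB) = \nu(B)$. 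By uniqueness of left Haar measure, $\nu = c\,\Haar$ for some $c = c(A) \geq 0$; integrating $w$ against both sides and using $\int_\X w\,d\Haar = 1$ together with the displayed form of the definition of $\P^\m$ (applied to $h = 1_A$) gives $c = \intensity\,\P^\m(A)$. Therefore
\[
    \E\int_\X 1_A(\theta_x^{-1})\,1_B(x)\,\m(dx) \;=\; \intensity\,\Haar(B)\,\P^\m(A) \;=\; \intensity\,\E^\m\!\int_\X 1_A\,1_B(x)\,\Haar(dx),
\]
which is the claimed identity for $f = 1_A \otimes 1_B$.

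Finally, the family of measurable $f : \Omega \times \X \to \R_+$ for which the identity holds contains this $\pi$-system of product indicators and is closed under nonnegative linear combinations and monotone increasing limits (both sides passing to the limit by monotone convergence); a functional monotone-class argument then extends the identity to all measurable $f \geq 0$. I expect the only genuine obstacle to be the translation-invariance of $\nu$: one must perform a single change of variables that uses simultaneously the $\theta$-invariance of $\P$ on $\Omega$ and the covariance-driven substitution $x \mapsto zx$ inside $\m(\omega,\cdot)$, keeping the group bookkeeping straight so that the composed shift $\theta_{(zx)^{-1}}\theta_z$ simplifies to $\theta_{x^{-1}}$. Everything else is routine measure theory together with the uniqueness of Haar measure.
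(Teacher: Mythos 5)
The paper does not prove this statement at all --- it is quoted from \cite{last2008modern} as a known tool --- so there is no internal proof to compare against; your argument is a correct, self-contained derivation and is essentially the classical one behind the cited result. Your key steps all check out: the functional form of the definition of $\P^\m$, the left-invariance of $\nu(B)=\E\int 1_B(x)1_A(\theta_x^{-1})\,\m(dx)$ (the bookkeeping $\theta_{(zx)^{-1}}\theta_z=\theta_{x^{-1}z^{-1}z}=\theta_{x^{-1}}$ is right, and local finiteness plus Radon-ness on an LCSH group justifies invoking uniqueness of Haar, with the degenerate case $\nu=0$ absorbed into $c=0$), and the evaluation $c=\int w\,d\nu=\intensity\P^\m(A)$. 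The only place I would tighten the write-up is the final extension step: a class of nonnegative functions containing the product indicators and closed only under nonnegative combinations and increasing limits is not by itself enough for a monotone-class conclusion, because the two sides can be infinite and subtraction (needed for the $\lambda$-system or vector-space step) is unavailable. The clean fix is to phrase it as equality of two measures on $\A\otimes\B(\X)$: both sides, evaluated at $f=1_C$, define measures that agree on the $\pi$-system $\{A\times B\}$ and are $\sigma$-finite along $\Omega\times B_n$ for relatively compact $B_n\uparrow\X$ (since $\E[\m(B_n)]=\intensity\Haar(B_n)<\infty$), so they coincide on the product $\sigma$-algebra; simple functions and monotone convergence then give all measurable $f\geq 0$. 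This is a routine repair, not a flaw in the approach.
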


It is possible to recover $\P$, up to the set on which $\m$ is the zero measure,
via the following inversion formula.
The zero measure on $\X$ is denoted $\zero$.

\begin{theorem}[Inversion Formula]\label{inversionformula} \cite{last2008modern}
    There exists a bounded measurable $K:\Omega \times \X \to \R_+$ such that
    \begin{equation}\label{generalrandomunitintegral}
        \int_\X K(\theta_e,x)\,\m(dx) = 1_{\m \neq \zero},
    \end{equation}
    and for all $K :\Omega \times \X \to \R_+$ (not necessarily bounded) $\P$-a.s.\ satisfying
    \eqref{generalrandomunitintegral}, it holds that
    \begin{equation}\label{generalinversionequation}
        \E[1_{\m\neq \zero} f] = \intensity \E^\m\int_\X
        f(\theta_x)K(\theta_x,x)\,\Haar(dx)
    \end{equation}
    for all measurable $f:\Omega \to \R_+$.
\end{theorem}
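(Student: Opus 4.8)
The plan is to handle the two assertions in turn: first exhibit a bounded admissible kernel $K$ satisfying \eqref{generalrandomunitintegral}, and then deduce \eqref{generalinversionequation} from the refined Campbell theorem (\Cref{CLMM}), valid for \emph{any} kernel satisfying \eqref{generalrandomunitintegral} $\P$-a.s.

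For the construction of $K$ I would use that $\X$ is $\sigma$-compact. Fix an increasing sequence $B_1 \subseteq B_2 \subseteq \cdots$ of relatively compact Borel sets with $\bigcup_n B_n = \X$, so that $\m(\omega, B_n) < \infty$ for all $n$ by local finiteness of $\m$. On the event $\{\m \neq \zero\}$ set $N(\omega) := \min\{ n : \m(\omega, B_n) > 0\}$, which is finite there since $\m(\omega, B_n) \uparrow \m(\omega, \X) > 0$, and define
\[
    K(\omega, x) := \sum_{n=1}^{\infty} \frac{1_{N(\omega) = n}}{\m(\omega, B_n)}\, 1_{x \in B_n},
\]
with $K(\omega, x) := 0$ on $\{\m = \zero\}$. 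Then $\int_\X K(\omega, x)\,\m(\omega, dx) = 1_{\m(\omega) \neq \zero}$, which is \eqref{generalrandomunitintegral}; measurability of $K$ follows from measurability of $\mu \mapsto \mu(B_n)$; and since $\m(\omega, B_n)$ is a positive integer whenever it is nonzero, $0 \leq K \leq 1$, so $K$ is bounded.

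For the inversion identity, let $K : \Omega \times \X \to \R_+$ be any measurable kernel satisfying \eqref{generalrandomunitintegral} $\P$-a.s., and let $f : \Omega \to \R_+$ be measurable. I would apply \Cref{CLMM} to the nonnegative measurable function $g(\omega, x) := f(\theta_x\omega)\,K(\theta_x\omega, x)$, which is measurable because the flow $\theta$ is jointly measurable. Since $\theta_x\theta_x^{-1} = \theta_e = \mathrm{id}$, one has $g(\theta_x^{-1}\omega, x) = f(\omega)\,K(\omega, x)$, so the left-hand side of \Cref{CLMM} applied to $g$ becomes
\begin{align*}
    \E\int_\X g(\theta_x^{-1}, x)\,\m(dx)
    &= \E\int_\X f(\omega)\,K(\omega, x)\,\m(\omega, dx) \\
    &= \E\left[ f \int_\X K(\omega, x)\,\m(\omega, dx)\right]
    = \E\left[ 1_{\m \neq \zero}\, f\right],
\end{align*}
using Tonelli and \eqref{generalrandomunitintegral}, while its right-hand side is $\intensity\,\E^\m\int_\X g(\theta_e, x)\,\Haar(dx) = \intensity\,\E^\m\int_\X f(\theta_x)\,K(\theta_x, x)\,\Haar(dx)$; equating the two gives \eqref{generalinversionequation}.

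There is little genuine obstacle here. The one thing needing care in the first part is arranging that the exhaustion produces a \emph{bounded} kernel, which is exactly where integrality of $\m(\omega, B_n)$ is used. In the second part the only points to verify are joint measurability of $g$ and that the computation invokes nothing about $K$ beyond the $\P$-a.s.\ identity \eqref{generalrandomunitintegral} — which is precisely what lets the theorem quantify over all admissible $K$.
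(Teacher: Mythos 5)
Your argument is correct. Note, though, that the paper does not prove this statement at all: it is imported from \cite{last2008modern} as a known result, so there is no internal proof to compare against. Your derivation is the standard one from that literature, and both halves check out: $\sigma$-compactness of the LCSH group $\X$ gives the exhaustion $\{B_n\}$, integrality of $\m(\cdot,B_n)$ gives $0\leq K\leq 1$, and measurability of $N$ follows from measurability of $\mu\mapsto\mu(B_n)$; for the identity, joint measurability of the flow makes $g(\omega,x):=f(\theta_x\omega)K(\theta_x\omega,x)$ admissible in \Cref{CLMM}, the computation $g(\theta_x^{-1}\omega,x)=f(\omega)K(\omega,x)$ uses only that $\theta_e=\mathrm{id}$ and $\theta_x^{-1}=\theta_{x^{-1}}$ (axioms of the action), and the hypothesis \eqref{generalrandomunitintegral} is invoked only $\P$-a.s.\ and only through $\int_\X K(\omega,x)\,\m(\omega,dx)=1_{\m(\omega)\neq\zero}$, which is exactly what permits the quantification over all admissible (not necessarily bounded) $K$ in \eqref{generalinversionequation}.
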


\begin{proposition}\label{shiftinvariant}
    If $A \in \A$ is \textdefn{shift-invariant} in the sense that 
    $A = \theta_x^{-1} A$ for all
    $x\in\X$, then
    \[
        \P(A) = 1 \implies \P^\m(A) = 1 \implies \P(A \mid \m \neq \zero) = 1.
    \]
    In particular, if $\{\m = \zero\} \subseteq A$ then
    \[
        \P(A) = 1 \iff \P^\m(A) = 1.
    \]
\end{proposition}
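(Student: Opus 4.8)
The plan is to use the refined Campbell theorem and the inversion formula, both stated above, together with the shift-invariance hypothesis. The structure is a cycle of two implications: first $\P(A)=1 \implies \P^\m(A)=1$, then $\P^\m(A)=1 \implies \P(A \mid \m \neq \zero)=1$. For the final biconditional, observe that if $\{\m=\zero\} \subseteq A$ then $\P(A \mid \m \neq \zero)=1$ forces $\P(A)=1$, since $\P(A) = \P(A, \m=\zero) + \P(A, \m\neq\zero) = \P(\m=\zero) + \P(\m\neq\zero) = 1$, closing the loop.

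For the first implication, suppose $\P(A)=1$. Apply C-L-M-M (\Cref{CLMM}) with $f(\omega,x) := 1_{\omega \notin A}\, w(x)$ where $w \geq 0$ is measurable with $\int_\X w\, d\Haar = 1$. Since $A$ is shift-invariant, $\theta_x^{-1}\omega \notin A \iff \omega \notin A$, so the left-hand side becomes $\E\int_\X 1_{\theta_x^{-1} \notin A}\, w(x)\,\m(dx) = \E[1_{\m \neq \zero} 1_{\omega \notin A}\int_\X w\,d\Haar]$... more carefully, $\E\int_\X 1_{\omega \notin A} w(x)\,\m(dx) = \E[1_{\omega\notin A} \int_\X w(x)\,\m(dx)]$, which is $0$ because $\P(A) = 1$. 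The right-hand side is $\intensity \E^\m \int_\X 1_{\theta_e \notin A} w(x)\,\Haar(dx) = \intensity\, \P^\m(A^c) \int_\X w\,d\Haar = \intensity\, \P^\m(A^c)$. Since $\intensity > 0$, this gives $\P^\m(A^c) = 0$, i.e.\ $\P^\m(A) = 1$. (One could alternatively read this directly off the definition of $\P^\m$ using shift-invariance, but invoking C-L-M-M keeps it uniform.)

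For the second implication, suppose $\P^\m(A)=1$. Use the inversion formula (\Cref{inversionformula}): let $K$ be the bounded measurable kernel with $\int_\X K(\theta_e,x)\,\m(dx) = 1_{\m \neq \zero}$, and apply \eqref{generalinversionequation} with $f := 1_{A^c}$. The left side is $\E[1_{\m \neq \zero} 1_{A^c}] = \P(A^c, \m \neq \zero)$. The right side is $\intensity \E^\m \int_\X 1_{\theta_x \in A^c} K(\theta_x,x)\,\Haar(dx)$; by shift-invariance $\theta_x \in A^c \iff \theta_e \in A^c$ (i.e.\ the current $\omega \in A^c$), so this equals $\intensity \E^\m\big[1_{A^c} \int_\X K(\theta_x,x)\,\Haar(dx)\big]$, which is $0$ since $\P^\m(A^c) = 0$. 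Hence $\P(A^c, \m \neq \zero) = 0$, which is exactly $\P(A \mid \m \neq \zero) = 1$.

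The main obstacle is a bookkeeping subtlety rather than a deep one: one must be careful that in the inversion formula the kernel $K$ appears as $K(\theta_x, x)$ inside the integral against $\Haar$ under $\E^\m$, and checking that shift-invariance of $A$ lets one pull the indicator $1_{A^c}$ outside this integral requires matching $\theta_x \in A^c$ with the fixed event $A^c$ correctly — the shift-invariance hypothesis $A = \theta_x^{-1}A$ for all $x$ is precisely what makes $1_{\theta_x \in A^c}$ independent of $x$. A secondary point to get right is that \eqref{generalinversionequation} only recovers $\P$ on $\{\m \neq \zero\}$, which is why the middle conclusion is conditional on $\m \neq \zero$ and why the extra hypothesis $\{\m = \zero\} \subseteq A$ is exactly what is needed to upgrade to the full equivalence.
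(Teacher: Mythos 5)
Your proposal is correct and follows essentially the same route as the paper: the first implication is the definition of $\P^\m$ (equivalently C-L-M-M with a weight $w$) combined with shift-invariance, the second is the inversion formula with shift-invariance used to pull the indicator out of the $\Haar$-integral, and the final equivalence is the same decomposition over $\{\m=\zero\}$ and $\{\m\neq\zero\}$. Working with $A^c$ instead of $A$ is only a cosmetic difference.
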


\begin{proof}
    Suppose $\P(A)=1$.
    From the definition of Palm probabilities, for $B \in \B(\X)$ such that $\Haar(B) \in (0,\infty)$,
    \begin{align*}
        \P^\m(A) 
        &= \frac{1}{\intensity \Haar(B)}\E\int_\X 1_{x \in B}1_{\theta_x^{-1} \in A}\,\m(dx)\\
        &= \frac{1}{\intensity \Haar(B)}\E\int_\X 1_{x \in B}1_{A}\,\m(dx) & \text{(shift-invariance of $A$)}\\
        &= \frac{1}{\intensity \Haar(B)}\E [1_A \m(B)]\\
        &= \frac{1}{\intensity \Haar(B)}\E[ \m(B)] & (\P(A)=1)\\
        &= 1.
    \end{align*}

    Next suppose $\P^\m(A) = 1$.
    Then from \Cref{inversionformula} there is measurable $K:\Omega \times \X \to \R$ such that
    \begin{align*}
        \P(A \cap \{\m \neq \zero\}) 
        &= \E[1_{\m \neq \zero} 1_{A}] \\
        &= \intensity \E^\m\int_\X 1_{\theta_x \in A}
        K(\theta_x,x)\,\Haar(dx)
        && \text{(inversion formula)}\\
        &= \intensity \E^\m\left[1_A\int_\X K(\theta_x,x)\,\Haar(dx)\right]
        && \text{(shift-invariance of $A$)}\\
        &= \intensity \E^\m\left[\int_\X K(\theta_x,x)\,\Haar(dx)\right]
        && (\P^\m(A)=1)\\
        &= \E[1_{\m \neq \zero}\cdot 1]
        && \text{(inversion formula)}\\
        &= \P(\m \neq \zero).
    \end{align*}
    Dividing by $\P(\m \neq \zero) > 0$ gives $\P(A \mid \m \neq \zero) = 1$, and
    if $\{ \m = \zero \} \subseteq A$, then
    \[
        \P(A) = \P(A \cap \{\m \neq \zero\}) + \P(A \cap \{\m = \zero\}) =
        \P(\m \neq \zero) + \P(\m = \zero) = 1.
    \]
\end{proof}

\begin{lemma}\label{palmtop}
    Let $A \in \A$. 
    Then
    \[
        \P^\m(A) = 1 \iff \P(\m(x \in \X: \theta_x^{-1} \notin A) = 0) = 1.
    \]
\end{lemma}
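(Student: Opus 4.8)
The plan is to unwind the definition of the Palm measure applied to $A^{c}$. Write $A^{*} := \{\omega\in\Omega : \m(\omega,\{x\in\X : \theta_x^{-1}\omega\notin A\}) = 0\}$, so that the right-hand side of the asserted equivalence is precisely $\P(A^{*}) = 1$. The one algebraic identity I will use is the defining formula
\[
    \P^\m(A^{c}) = \frac{1}{\intensity}\,\E\int_\X 1_{\theta_x^{-1}\omega\notin A}\,w(x)\,\m(dx),
\]
valid for every non-negative measurable $w$ with $\int_\X w\,d\Haar = 1$, together with the fact recorded in the setup that $\P^\m$ does not depend on the choice of $w$. The preliminary step is therefore to fix a \emph{strictly positive} such $w$: since $\X$ is LCSH it is $\sigma$-compact, and $\Haar$ is locally finite, hence $\sigma$-finite, so a measurable $w:\X\to(0,\infty)$ with $\int_\X w\,d\Haar = 1$ exists.

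Given this $w$, both directions are immediate. For ``$\Leftarrow$'': if $\P(A^{*}) = 1$ then for $\P$-a.e.\ $\omega$ the set $\{x:\theta_x^{-1}\omega\notin A\}$ is $\m(\omega)$-null, and since the integrand $x\mapsto 1_{\theta_x^{-1}\omega\notin A}w(x)$ is supported on that set it integrates to $0$ against $\m(\omega)$; the displayed expectation then vanishes, so $\P^\m(A^{c}) = 0$, i.e.\ $\P^\m(A) = 1$. For ``$\Rightarrow$'': if $\P^\m(A) = 1$ then the left side above is $0$, so the non-negative random variable $\omega\mapsto\int_\X 1_{\theta_x^{-1}\omega\notin A}w(x)\,\m(\omega,dx)$ has $\P$-expectation $0$ and hence equals $0$ for $\P$-a.e.\ $\omega$; because $w>0$ everywhere, the measure $w\,d\m$ has the same null sets as $\m$, so for $\P$-a.e.\ $\omega$ the set $\{x:\theta_x^{-1}\omega\notin A\}$ is $\m(\omega)$-null, which is exactly $\P(A^{*}) = 1$.

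The only point requiring care is this reduction to a positive weight: one must confirm $\Haar$ is $\sigma$-finite so that such a $w$ exists, and invoke the already-stated independence of $\P^\m$ from $w$ in order to evaluate $\P^\m(A^{c})$ with this particular choice. If one prefers not to construct a positive $w$, the identical argument runs with $w$ replaced by $w_n := 1_{K_n}/\Haar(K_n)$ along a Haar-finite exhaustion $\X=\bigcup_n K_n$ (using, in the ``$\Rightarrow$'' direction, the refined Campbell theorem \Cref{CLMM} with $f(\omega,x):=1_{\omega\notin A}w_n(x)$ if one wishes to stay in the Palm formalism), concluding that $\{x:\theta_x^{-1}\omega\notin A\}\cap K_n$ is $\m$-null $\P$-a.s.\ for each $n$ and then taking the union over $n$. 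Either way, \Cref{shiftinvariant} is not needed for this lemma.
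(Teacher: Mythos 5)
Your proposal is correct and follows essentially the same route as the paper: both simply unwind the defining formula of $\P^\m$ applied to $A^c$ and use non-negativity of the integrand, the only cosmetic difference being that you evaluate with a single strictly positive weight $w$ while the paper takes $w = 1_B/\Haar(B)$ and lets relatively compact $B$ increase to $\X$ (the exhaustion variant you yourself mention). Your observation that \Cref{shiftinvariant} is not needed here matches the paper, which reserves that proposition for the equivalence with the third statement of \Cref{palmpalmostsureequivalence}.
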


\begin{proof}
    By replacing $A$ with its complement it is equivalent to show $\P^\m(A) = 0$ if and only if $\P(\m(x \in \X:
    \theta_x^{-1} \in A) > 0) = 0$.
    Note that it is the joint measurability of the action $(\omega,x)\mapsto \theta_x\omega$
    that lets one conclude for $B \in \B(\X)$ that sets like
    \[
        \{\m(x \in \X: x\in B, \theta_x^{-1} \in A) > 0\}
    \]
    are measurable.

    If $\P^\m(A) = 0$, then for $B \in \B(\X)$ such that
    $\Haar(B) \in(0,\infty)$,
    \begin{align*}
        0
        &=\P^\m(A) \\
        &= \frac{1}{\intensity \Haar(B)}\E\int_\X 1_{x \in B}1_{\theta_x^{-1}
            \in A}\,\m(dx)\\
        &= \frac{1}{\intensity \Haar(B)}\E[\m(x \in \X: x\in B, \theta_x^{-1} \in A)].
    \end{align*}
    Thus 
    $\E[\m(x \in \X: x\in B, \theta_x^{-1} \in A)] = 0$
    and taking relatively compact $B$ increasing to $\X$ one finds
    $\E[\m(x \in \X: \theta_x^{-1} \in A)] = 0$,
    so $\P(\m(x \in \X: \theta_x^{-1} \in A) > 0)=0$.

    Conversely, suppose 
    $\P(\m(x \in \X: \theta_x^{-1} \in A) > 0)=0$.
    Then for $B \in \B(\X)$ with $\Haar(B)\in (0,\infty)$,
    \begin{align*}
        \P^\m(A)
        &= \frac{1}{\intensity \Haar(B)}\E\int_\X 1_{x \in B}1_{\theta_x^{-1}
            \in A}\,\m(dx)\\
        &= \frac{1}{\intensity \Haar(B)}\E[\m(x \in \X: x\in B, \theta_x^{-1}
        \in A)]\\
        &\leq \frac{1}{\intensity \Haar(B)}\E[\m(x \in \X: \theta_x^{-1} \in A)]\\
        &= 0,
    \end{align*}
    completing the proof.
\end{proof}

It is now possible to prove~\Cref{palmpalmostsureequivalence}, which is restated here for clarity.

{
    \renewcommand{\thetheorem}{\ref{palmpalmostsureequivalence}}
    \begin{theorem}\label{palmpalmostsureequivalenceappendix}
        Let $A \in \A$. Then the following are equivalent:
        \begin{enumerate}[label=(\alph*)]
            \item $\P^\m(A) = 1$,
            \item $\P(\m(x\in \X: \theta_x^{-1} \notin A) =0)=1$,
            \item $\P^\m(\m(x\in \X: \theta_x^{-1} \notin A) =0)=1$.
        \end{enumerate}
    \end{theorem}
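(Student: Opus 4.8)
The plan is to reduce everything to \Cref{palmtop} and \Cref{shiftinvariant} by isolating the relevant event. Set
\[
    B := \bigl\{\omega \in \Omega : \m\bigl(\omega, \{x \in \X : \theta_x^{-1}\omega \notin A\}\bigr) = 0\bigr\},
\]
which is measurable for the same joint-measurability reason recorded in the proof of \Cref{palmtop}. In this notation (b) is the assertion $\P(B) = 1$ and (c) is the assertion $\P^\m(B) = 1$, while the equivalence (a)$\iff$(b) is exactly \Cref{palmtop}. So the only remaining task is to prove $\P(B) = 1 \iff \P^\m(B) = 1$, which I would do by checking that $B$ satisfies the hypotheses of \Cref{shiftinvariant}.

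First I would verify that $B$ is shift-invariant, i.e.\ $\theta_z^{-1}B = B$ for all $z \in \X$. Using the covariance identity $\m(\theta_z\omega, C) = \m(\omega, z^{-1}C)$, the substitution $x = zy$, and the flow relation $\theta_{zy}^{-1}(\theta_z\omega) = \theta_y^{-1}\omega$ (which follows from $\theta$ being a left action), one computes
\[
    \m\bigl(\theta_z\omega, \{x : \theta_x^{-1}(\theta_z\omega) \notin A\}\bigr) = \m\bigl(\omega, \{y : \theta_y^{-1}\omega \notin A\}\bigr),
\]
so $\theta_z\omega \in B$ if and only if $\omega \in B$. Second, I would observe that $\{\m = \zero\} \subseteq B$, since the zero measure assigns mass $0$ to every Borel set.

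With $B$ shift-invariant and containing $\{\m = \zero\}$, \Cref{shiftinvariant} yields $\P(B) = 1 \iff \P^\m(B) = 1$, i.e.\ (b)$\iff$(c); combined with \Cref{palmtop} this closes the cycle (a)$\iff$(b)$\iff$(c). The one step demanding care is the shift-invariance computation, where the left-action convention for $\theta$ and the covariance relation for $\m$ must be tracked consistently; everything else is a direct appeal to the two results already established.
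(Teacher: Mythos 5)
Your proposal is correct and follows essentially the same route as the paper: (a)$\iff$(b) is exactly \Cref{palmtop}, and (b)$\iff$(c) is obtained by checking that the event $\{\m(x\in\X:\theta_x^{-1}\notin A)=0\}$ is shift-invariant and contains $\{\m=\zero\}$, then invoking \Cref{shiftinvariant}. Your shift-invariance computation via the covariance identity and the left-action relation matches the paper's verification.
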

    \addtocounter{theorem}{-1}
}

\begin{proof}
    \begin{enumerate}
        \item[]
        \item[] $\emph{(a)}$ $\iff$ $\emph{(b)}$: 
        This is the content of \Cref{palmtop}.
        \item[] $\emph{(b)}\iff\emph{(c)}$: This follows 
            from \Cref{shiftinvariant} and the fact
            that the event $\{ \m(x\in \X: \theta_x^{-1} \notin A) =0 \}$ contains
            $\{\m = \zero\}$ and is shift-invariant.
            To wit, for all $y \in \X$,
            \begin{align*}
                &\theta_y^{-1}\omega \in \{ \m(x\in \X: \theta_x^{-1} \notin A) =0 \}\\
                \iff& \m(\theta_y^{-1}\omega,\{x \in
                \X:\theta_x^{-1}\theta_y^{-1}\omega \notin  A\})=0\\
                \iff& \m(\omega, \{yx: x \in \X, \theta_{yx}^{-1}\omega \notin
                A\})\\
                \iff& \m(\omega, \{x \in \X, \theta_{x}^{-1}\omega \notin A\})= 0\\
                \iff&\omega \in \{ \m(x \in \X, \theta_{x}^{-1}\omega \notin
                A)=0\}.
            \end{align*}
    \end{enumerate}
\end{proof}

\begin{example}\label{convertPandPalmDefn}
    Fix some measurable space $(S,\Sigma)$ and a measurable $f:\Omega \to S$.
    Define $F:\Omega\times \X \to S$ by 
    $F(\omega,x) := f(\theta_x^{-1}\omega)$
    for all $\omega\in \Omega, X \in \m(\omega)$, and $F(\omega,x)$ may be defined arbitrarily otherwise.
    It will be shown that knowing $F$ up to a $\P$- or $\P^\m$-null set on the support of $\m$ is equivalent to knowing $f$
    up to a $\P^\m$-null set.
    Indeed, suppose $f=f'$, $\P^\m$-a.s., then it will be shown that the corresponding $F,F'$
    agree $\P,\P^\m$-a.s.\ on the support of $\m$.
    By \Cref{palmpalmostsureequivalence}, $\P$- and $\P^\m$-a.e.\ $\omega \in \Omega$
    has for all $X \in \m(\omega)$ that
    $f(\theta_X^{-1}\omega) = f'(\theta_X^{-1}\omega)$, i.e. $F(\omega,X)=F'(\omega,X)$.
    Similarly, if either $\P$-a.e.\ or $\P^\m$-a.e.\ $\omega \in \Omega$ is such that 
    $F(\omega,X)=F'(\omega,X)$
    for all $X \in \m(\omega)$, then
    \begin{align*}
        f(\theta_X^{-1}\omega) 
        &= F(\omega,X) \\
        &= F'(\omega,X)\\
        &= f'(\theta_X^{-1}\omega),
    \end{align*}
    for $\P$-a.e.\ or $\P^\m$-a.e.\ $\omega \in \Omega, X \in \m(\omega)$, so
    by \Cref{palmpalmostsureequivalence}
    one finds that $f = f'$, $\P^\m$-a.s.
    Thus, $f$ may be defined under $\P^\m$ or $F$ may be defined under $\P$ or $\P^\m$, whichever is more convenient.
\end{example}

Finally, the following standard result is needed in \Cref{sec:classification-counterexample}.

\begin{theorem}[Slivnyak-Mecke Theorem]\label{slivnyakmecke} \cite{daley2007introduction}
    The distribution of $\m$ under $\P^\m$
    is the same as the distribution of $\m+\delta_e$ under $\P$ if and only if
    $\m$ is a homogeneous Poisson point process with intensity $\intensity$ under $\P$.
\end{theorem}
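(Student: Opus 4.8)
The plan is to deduce both implications from the classical Mecke equation characterizing Poisson processes on the LCSH space $\X$: a point process $\m$ with locally finite intensity measure $\Lambda$ is a Poisson process with intensity measure $\Lambda$ if and only if
\[
\E\int_\X h(x,\m)\,\m(dx) = \int_\X \E\big[h(x,\m+\delta_x)\big]\,\Lambda(dx)
\]
for all measurable $h:\X\times\MM\to\R_+$; see \cite{daley2007introduction}. Since $\m$ is flow-adapted it is $\X$-stationary, so its intensity measure is left-invariant and therefore equals $\intensity\Haar$; thus ``homogeneous Poisson point process with intensity $\intensity$'' is exactly the statement that $\m$ is Poisson with intensity measure $\intensity\Haar$. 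I will also use the flow identity $\m\circ\theta_x = T_x\m$, the elementary relations $T_x\delta_e=\delta_x$ and $T_{x^{-1}}\delta_x=\delta_e$, and the fact that under $\P^\m$ one has $e\in\m$ a.s., so the reduced process $\m^!:=\m-\delta_e$ is a well-defined point process under $\P^\m$.

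For the ``if'' direction, suppose $\m$ is Poisson with intensity $\intensity\Haar$. Let $B\subseteq\MM$ be measurable and apply the Mecke equation to $h(x,\mu):=w(x)\,1_B(T_{x^{-1}}\mu)$, where $w$ is the unit-integral weight appearing in the definition of $\P^\m$. Using $\m\circ\theta_x^{-1}=T_{x^{-1}}\m$ and $T_{x^{-1}}(\m+\delta_x)=T_{x^{-1}}\m+\delta_e$, one obtains
\[
\P^\m(\m\in B)=\tfrac{1}{\intensity}\,\E\int_\X w(x)\,1_B(T_{x^{-1}}\m)\,\m(dx) = \int_\X \E\big[1_B(T_{x^{-1}}\m+\delta_e)\big]\,w(x)\,\Haar(dx).
\]
By $\X$-stationarity $T_{x^{-1}}\m\overset{d}{=}\m$, and since $\int_\X w\,d\Haar=1$ the right-hand side is $\P(\m+\delta_e\in B)$, which is the claim.

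For the ``only if'' direction, assume $\m$ under $\P^\m$ has the law of $\m+\delta_e$ under $\P$; equivalently, $\m^!$ under $\P^\m$ has the law of $\m$ under $\P$. Given a measurable $h:\X\times\MM\to\R_+$, apply the refined Campbell theorem (\Cref{CLMM}) with $f(\omega,x):=h(x,T_x\m(\omega))$; since $\m(\omega)=T_x\m(\theta_x^{-1}\omega)$ one has $f(\theta_x^{-1}\omega,x)=h(x,\m(\omega))$, hence
\[
\E\int_\X h(x,\m)\,\m(dx) = \intensity\,\E^\m\int_\X h(x,T_x\m)\,\Haar(dx) = \intensity\int_\X \E^\m\big[h(x,T_x\m^!+\delta_x)\big]\,\Haar(dx),
\]
where $T_x\m=T_x\m^!+\delta_x$ on the event $\{e\in\m\}$ was used. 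By the hypothesis the inner expectation equals $\E[h(x,T_x\m+\delta_x)]$, and by $\X$-stationarity this equals $\E[h(x,\m+\delta_x)]$. Thus $\m$ satisfies the Mecke equation with $\Lambda=\intensity\Haar$ and is a homogeneous Poisson point process with intensity $\intensity$.

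The step I expect to be the main obstacle is the bookkeeping of the translation operators $T_x$ and $T_{x^{-1}}$ in the two changes of variable — through the Mecke equation in one direction and through \Cref{CLMM} in the other — so that left-invariance of $\Haar$ and $\X$-stationarity of $\m$ may be invoked at exactly the right places (including the joint measurability of $(\omega,x)\mapsto T_x\m(\omega)$ needed to apply \Cref{CLMM}); some care is also needed to state the general, non-stationary Mecke characterization over an LCSH base in a form that specializes cleanly to $\Lambda=\intensity\Haar$, noting in particular that $\Haar$ must be diffuse for the resulting Poisson process to be simple as the conventions here require.
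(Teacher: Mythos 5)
The paper does not actually prove this theorem: it is quoted from \cite{daley2007introduction} as a standard result, so there is no internal proof to compare against. Your argument is correct as a derivation of the stated equivalence from the classical Mecke equation characterization of Poisson processes, which you take as a black box from the same reference. Both directions check out: in the ``if'' direction the choice $h(x,\mu)=w(x)1_B(T_{x^{-1}}\mu)$, together with $T_{x^{-1}}(\m+\delta_x)=T_{x^{-1}}\m+\delta_e$, stationarity of $\m$, and $\int_\X w\,d\Haar=1$, correctly turns the defining formula for $\P^\m$ into $\P(\m+\delta_e\in B)$; in the ``only if'' direction the application of \Cref{CLMM} with $f(\omega,x)=h(x,T_x\m(\omega))$, the identity $T_x\m=T_x\m^!+\delta_x$ on $\{e\in\m\}$ (which has full $\P^\m$-measure), and stationarity recover the Mecke equation with intensity measure $\intensity\Haar$, and flow-adaptedness forces the intensity measure of a Poisson $\m$ to be this left-invariant multiple of Haar measure, so ``homogeneous with intensity $\intensity$'' is justified. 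The genuine content of your proof is thus the transfer between the flow-based Palm measure $\P^\m$ on $(\Omega,\A)$ used in this framework and the purely distributional statement about $\m$ and $\m+\delta_e$, and you handle the $T_x$/$\theta_x$ bookkeeping correctly; the caveats you flag (joint measurability of $(\omega,x)\mapsto T_x\m(\omega)$, diffuseness of $\Haar$ for compatibility with the simplicity convention) are real but standard and do not constitute gaps. One should only be aware that the Mecke characterization you invoke is itself essentially the non-stationary form of the Slivnyak--Mecke theorem, so the proof is a reduction to a close relative of the statement rather than an independent argument --- which is legitimate given how the paper itself treats the result.
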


\begingroup
    \newpage
    \section*{References}
    \renewcommand{\section}[2]{}
    \bibliographystyle{abbrv}
    \bibliography{point-shiftfs-unimodular.bbl}
\endgroup

\end{document}